\documentclass[11pt]{amsart}
\usepackage{fullpage}
\usepackage[utf8]{inputenc}
\usepackage{amsmath,amsthm,amssymb,amscd}
\usepackage[all,cmtip]{xy}
\usepackage{dsfont}
\usepackage{mathrsfs}
\usepackage{leftidx}

\usepackage[square,sort,comma,numbers]{natbib}

\usepackage{color}
\usepackage{hyperref}
\hypersetup{
   colorlinks=true,
   linktoc=all,
   linkcolor=black,
   citecolor=black,
   urlcolor=black,
}

\newtheorem{thm}{Theorem}[section]
\newtheorem{cor}[thm]{Corollary}
\newtheorem{lem}[equation]{Lemma}
\newtheorem{prop}[equation]{Proposition}

\newtheorem{conjecture}{Conjecture}[section]

\theoremstyle{definition}
\newtheorem{defn}{Definition}[section]

\theoremstyle{remark}

\numberwithin{equation}{section}

\newcommand{\bbA}{\mathbb{A}}

\newcommand{\bbC}{\mathbb{C}}

\newcommand{\bbF}{\mathbb{F}}
\newcommand{\bbG}{\mathbb{G}}

\newcommand{\bbQ}{\mathbb{Q}}

\newcommand{\cA}{\mathcal{A}}

\newcommand{\cE}{\mathcal{E}}

\newcommand{\cH}{\mathcal{H}}

\newcommand{\cL}{\mathcal{L}}
\newcommand{\cM}{\mathcal{M}}
\newcommand{\cN}{\mathcal{N}}
\newcommand{\cO}{\mathcal{O}}
\newcommand{\cP}{\mathcal{P}}

\newcommand{\cX}{\mathcal{X}}

\newcommand{\fC}{\mathfrak{C}}
\newcommand{\fD}{\mathfrak{D}}

\newcommand{\fc}{\mathfrak{c}}

\newcommand{\fg}{\mathfrak{g}}

\newcommand{\ft}{\mathfrak{t}}

\newcommand{\ga}{\gamma}
\newcommand{\la}{\lambda}

\newcommand{\spec}{\mathrm{Spec}}

\newcommand\bm[1]{\begin{bmatrix}#1\end{bmatrix}}

\newcommand{\ving}{\mathrm{Vin}_{G}}

\title{An overview of the geometry of Kottwitz-Viehmann varieties}
\author{Jingren Chi}
\address{Morningside Center of Mathematics and State Key Laboratory of Mathematical Sciences, Academy of Mathematics and Systems Science, Chinese Academy of Sciences, Beijing 100190, China}
\email{jrenchi@amss.ac.cn}
\date{}

\begin{document}
\begin{abstract}
    We give a survey on the work \cite{Chi} about geometrization of orbital integrals of spherical Hecke functions on reductive groups over non-archimedean local fields.\footnotemark
\end{abstract}
\maketitle
\stepcounter{footnote}
\footnotetext{This is an updated version of an expository article appeared in \emph{Proceedings of the International Consortuim of Chinese Mathematicians 2019, vol.2}. Compared to the published version, we add Section \ref{sec:example} about an example in $\mathrm{SL}_3$.}

\section{Introduction}
\subsection{Motivations}
The Arthur-Selberg trace formula and its variants are important tools in automorphic representation theory and number theory. On the geometric side of the trace formula, one encounters orbital integrals on a reductive group over Archimedean or $p$-adic local fields. Most applications of trace formulas involve detailed study of these orbital integrals. In practice, it seems that the study of non-Archimedean orbital integrals are more challenging than the Archimedean ones.\par 
Our general motivation is to apply algebro-geometric methods to understand certain $p$-adic orbital integrals. In fact, currently this method has been more fruitful in the analogous situation of equal-characteristic non-Archimedean local field. In this paper we focus on a specific class of such orbital integrals.\par 
To be more precise, we consider a split connected reductive group $G$ over a non-Archimedean local field $F$ with residue field $k=\bbF_q$. Let $\cO$ be the ring of integers in $F$ and $\varpi\in\cO$ a fixed uniformizer.  Denote $K=G(\cO)$. Let $T$ be a split maximal torus of $G$ and $B$ a Borel subgroup containing $T$. Let $X_*(T)$ be the coweight lattice of $T$ and $X_*(T)_+$ the set of dominant coweights (with respect to the simple roots defined by $B$). For each regular semisimple element $\ga\in G(F)^{\mathrm{rs}}$, let $G_\ga$ be its centralizer. Then the orbital integral of a test function $f\in C_c^\infty(G(F))$ along the adjoint orbit of $\ga$ is 
\[O_\ga(f):=\int_{G_\ga(F)\backslash G(F)}f(g^{-1}\ga g)\frac{dg}{dg_\ga}\]
where $dg$ is a Haar measure on $G(F)$ and $dg_\ga$ is a Haar measure on $G_\ga(F)$ and we integrate against the quotient measure on $G(F)/G_\ga(F)$. The specific class of functions we are interested in are those in the spherical Hecke algebra $\cH_K$, consisting of locally constant $\bbC$-valued functions on $G(F)$ that are compactly supported and bi-invariant under $K$. By Cartan decomposition, $\cH_K$ has a basis consisting of characteristic functions of double cosets $K\varpi^\la K$ for $\la\in X_*(T)_+$, where $\varpi^\la:=\la(\varpi)\in T(F)$. So the basic building blocks are the orbital integrals $O_\ga(1_{K\varpi^\la K})$. Unwinding the definitions, we see that, in a suitable sense, this integral counts the quotient groupoid $[X_\ga^\la/G_\ga(F)]$ where
\[X_\ga^\la:=\{g\in G(F)/K | g^{-1}\ga g\in K\varpi^\la K\}.\]
The sets $X_\ga^\la$ have been studied in the work of Kottwitz-Viehmann \cite{KoV}. In \cite{Bou15} and \cite{Chi} it is shown that they are $k$ points of certain finite dimensional algebraic varieties locally of finite type over $k$ (this fact is also proved in a previous work of Lusztig \cite{Lu15}). We refer to them as ``\emph{Kottwitz-Viehmann varieties}". We would get the asymptotic behaviour of the orbital integrals $O_\ga(1_{K\varpi^\la K})$ through the study of basic geometric properties of the Kottwitz-Viehmann varieties $X_\ga^\la$

\subsection{Plan of the paper}
Our aim is to give an expository account of some recent works on the geometry of Kottwitz-Viehmann varieties, including \cite{Bou15}, \cite{BC18} and \cite{Chi}. We start by some generalities on 
orbital integrals and their geometric interpretation in \S\ref{sec:orbital-integral}. In \S\ref{sec:statement} we state the results on non-emptiness pattern and dimension formula for Kottwitz-Viehmann varieties. Then in \S\ref{sec:ASF} we review the analogous Lie algebra situation where one encounters affine Springer fibers and compare them with Kottwitz-Viehmann varieties. In \S\ref{sec:Vinberg}, we give a brief review of the theory of Vinberg monoids that is used in the study of Kottwitz-Viehmann varieties. In \S\ref{sec:Hitchin}, we introduce the global analogue of Kottwitz-Viehmann varieties and discuss its role in the proof of the dimension formula. The last section \S7 is devoted to studying an example in $\mathrm{SL}_3$ case.

\subsection{Notations}
$G$ always denotes a (split) connected reductive group and we always assume that the characteristic of the base field does not divide the order of the Weyl group $W$ of $G$. Let $\fg:=\mathrm{Lie}(G)$ be the Lie algebra of $G$.\par 
Fix a maximal torus $T$ of $G$ and a Borel subgroup $B$ containing $T$. Let $\Delta=\{\alpha_1,\dotsc,\alpha_r\}$ be the set of simple roots determined by $T\subset B$. Let $\check{\Lambda}:=X^*(T)$ (resp. $\Lambda:=X_*(T)$) be the weight (resp. coweight) lattice. Let $\check{\Lambda}^+$ (resp. $\Lambda^+$) be the set of dominant weights (resp. dominant coweights). Let $W$ be the Weyl group of $G$ and $S\subset W$ the set of simple reflections associated to the simple roots $\Delta$. There is a unique longest element $w_0$ of $W$ under the Bruhat order determined by $S$.

\subsection*{Acknowledgement}
This survey article is based on the author's talk at the 8th International Congress of Chinese Mathematicians in 2019. I thank the organizers of the conference for the invitation and financial support. This work is started when I was a postdoc at Départment de Mathématiques d'Orsay. I thank Fondations Mathématiques Jacquet Hadamard for the financial support.

\section{Generalities on non-archimedean orbital integrals}\label{sec:orbital-integral}
In this section we let $F$ be a non-archimedean local field with ring of integers $\cO$ and residue field $\kappa=\bbF_q$, the finite field of characteristic $p$ with $q$ elements. Then $F$ is a finite extension of either $\bbQ_p$ or $\bbF_q((t))$. \par 
Our interest is in orbital integrals of functions in the spherical Hecke algebra $\cH_K$ consisting of locally constant $\bbC$-valued functions with compact support that are left and right invariant under $K=G(\cO)$. Recall that we have the Cartan decomposition 
\[G(F)=\bigsqcup_{\la\in X_*(T)_+}K\varpi^\la K\]
which implies that the characteristic functions $1_{K\varpi^\la K}$ form a basis of $\cH_K$. For this reason, we focus on orbital integrals of $1_{K\varpi^\la K}$.\par 
Let $\ga\in G(F)$ be a strongly regular semisimple element. Then its centralizer $G_\ga$ is a maximal torus defined over $F$. We fix the Haar measure $dg$ on $G(F)$ so that $K=G(\cO)$ has volume one. Also fix a Haar measure $dg_\ga$ on $G_\ga(F)$. Let $d\dot{g}=\frac{dg}{dg_\ga}$ be the quotient measure on $G_\ga(F)\backslash G(F)$. The orbital integral we study is defined as
\[O_\ga(1_{K\varpi^\la K})=\int_{G_\ga(F)\backslash G(F)}1_{K\varpi^\la K}(g^{-1}\ga g)d\dot{g}.\]
We can interpret $O_\ga(1_{K\varpi^\la K})$ as a weighted counting as follows. Consider the set
\[X_\ga^\la=\{g\in G(F)/K | g^{-1}\ga g\in K\varpi^\la K\}\]
The centralizer $G_\ga(F)$ acts naturally on $X_\ga^\la$ by left multiplication on right $K$-cosets. For each $x\in X_\ga^\la$, let $\mathrm{Stab}_{G_\ga(F)}(x)$ be its stabilizer in $G_\ga(F)$, which is a compact open subgroup of $G_\ga(F)$. Unwinding the definition, we see that
\[O_\ga(1_{K\varpi^\la K})=\sum_{x\in G_\ga(F)\backslash X_\ga^\la}\frac{1}{\mathrm{vol}(\mathrm{Stab}_{G_\ga(F)}(x),dg_\ga)}\]
where $x$ ranges over a set of representatives for the $G_\ga(F)$-orbits on $X_\ga^\la$.\par 
Among various general goals for studying orbital integrals, we focus on their asymptotic estimates. The basic observation is that $X_\ga^\la$ has an algebro-geometric structure so that the right hand side above could be interpreted roughly as counting $\bbF_q$-points of certain algebraic varieties. To be more precise, in the case where $F$ is equi-characteristic, $X_\ga^\la$ is the set of $\bbF_q$-points of a locally closed subscheme of the affine Grassmanian for $G$; in the case where $F$ is mixed-characteristic, $X_\ga^\la$ is a perfect algebraic space over $\bbF_q$, which is a locally closed subscheme of the mixed-characteristic affine Grassmanian. 
We denote the underlying scheme (or perfect algebraic space) also by $X_\ga^\la$. Our primary interest is to obtain an asymptotic estimate of
$O_\ga(1_{K\varpi^\la K})$. This is related to the geometry of $X_\ga^\la$ as follows. For each finite extension $\bbF_{q^n}$ of $\bbF_q$ of degree $n$, let $F_n$ be the corresponding unramified extension of $F$ and $\cO_n$ its ring of integers. Let $K_n:=G(\cO_n)$. We can view $\ga$ as an element in $G(F_n)$ and consider the orbital integral of the function $1_{K_n\varpi^\la K_n}$, now a basis vector in the spherical Hecke algebra of $G(F_n)$. Then $O_\ga(1_{K_n\varpi^\la K_n})$ can be expressed as a polynomial in $q^n$ and as $n\to\infty$, the highest degree of this polynomial will be the dimension of $X_\ga^\la$. In other words, we have the asymptotic estimate
\[O_\ga(1_{K_n\varpi^\la K_n})=O(q^{n\dim X_\ga^\la}),\quad\text{ as }n\to\infty\]
Moreover, for $n$ large enough, the coefficient of the leading term $q^{n\dim X_\ga^\la}$
will be, up to a volume factor, the number of top dimensional irreducible components of the quotient stack $[X_\ga^\la/G_\ga(F)]$.\par 
Our current focus is in the equi-characteristic case. We refer to the schemes $X_\ga^\la$ as \emph{Kottwitz-Viehmann varieties}. The advantage of equi-characteristic situation is that one can obtain natural deformations of $X_\ga^\la$ via global methods, which is currently unavailable in mixed-characteristic case. Since we will be interested in geometric properties (mainly dimension and irreducible components), We assume from now on that the base field $k$ is algebraically closed and let $F=k((\varpi))$ and $\cO=k[[\varpi]]$.

\section{Statement of results}\label{sec:statement}

In this section we state the main results on the geometry of Kottwitz-Viehmann varieties, including non-emptiness pattern, dimension formula, a conjectural description of number of irreducible componenets and partial results on the conjecture. We start by reviewing some invariants of a regular semisimple conjugacy class used in the statements of results. 

\subsection{Invariants of a regular semisimple conjugacy class}
\subsubsection{Newton Point}\label{Newton-point-section}
Following \cite[\S4]{KoV}, for each $\ga\in G(F)^{\mathrm{rs}}$, one associate a rational dominant coweight $\nu_\ga\in X_*(T)^+_\bbQ$, called the \emph{Newton point of $\ga$}. It is defined as follows. There is a totally ramified extension $\tilde{F}$ of $F$ such that $\ga$ is $G(\tilde{F})$ conjugate to an element in $T(\tilde{F})$. Consider the $W$-orbit of its image in $T(\tilde{F})/T(\tilde{\cO})=\frac{1}{[\tilde{F}:F]}X_*(T)$. Here $\tilde{\cO}$ is the ring of integers of $\tilde{F}$. Then the unique dominant element in this $W$-orbit is the Newton point $\nu_\ga$. 

\subsubsection{Kottwitz map}
Let $\pi_1(G)$ be the fundamental group of $G$, defined as the quotient of the coweight lattice by the coroot lattice of $T$. Let $p_G:X_*(T)\to\pi_1(G)$ be the natural projection. Following \cite{KoV}, one defines a group homomorphism
\[\kappa_G: G(F)\to\pi_1(G)\]
which we refer to as Kottwitz map. Note that in \emph{loc. cit.}, this map is denoted by $w_G$. The map $\kappa$ can also be understood more explicitly as follows: let $G(F)_1$ be the subgroup of $G(F)$ generated by all parahoric subgroups, then $\kappa_G$ induces an isomorphism $G(F)/G(F)_1\cong\pi_1(G)$. 

\subsubsection{Discriminant valuation}
The \emph{discriminant valuation} for a regular semisimple element $\ga\in G(F)^{\mathrm{rs}}$ is defined by
\[d(\ga):=\mathrm{val}\det(\mathrm{Id}-\mathrm{ad}_\ga:\fg(F)/\fg_\ga(F)\to\fg(F)/\fg_\ga(F))\]
where $\fg$ is the Lie algebra of $G$ and $\fg_\ga$ is the centralizer of $\ga$, i.e. the fixed locus of the adjoint action $\mathrm{ad}_\ga$. \par 
For example, in the case $G=\mathrm{SL}_n$, $\ga\in G(F)^{\mathrm{rs}}$ is an $n\times n$ matrix of determinant $1$ and $d(\ga)$ is the valuation of the discriminant of the characteristic polynomial of $\ga$. In other words, if $\{\la_i,1\le i\le n\}$ are the (distinct) eigen-values of $\ga$ in an algebraic closure of $F$, then we have
\[d(\ga)=\mathrm{val}\prod_{i\ne j}(\la_i-\la_j).\]

\subsection{Non-emptiness pattern and dimension formula}
Let $\la\in X_*(T)_+$ and $\ga\in G(F)^{\mathrm{rs}}$ be a regular semisimple element. The following theorem gives non-emptiness criterion for the Kottwitz-Viehmann variety $X_\ga^\la$.

\begin{thm}[Non-emptiness pattern]\label{thm:nonempty}
The following are equivalent:
\begin{enumerate}
\item[(i)] $X_{\ga}^\lambda$ is nonempty;
\item[(ii)] $\kappa_G(\ga)=p_G(\la)$ and $\nu_\ga\le_\bbQ\la$, i.e. $\la-\nu_\ga$ is a $\bbQ$-linear combination of simple coroots with non-negative coefficients;
\end{enumerate}
\end{thm}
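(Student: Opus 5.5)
Both implications work with a representative $g\in G(F)$, i.e.\ a point $gK$ of $X_\ga^\la$, so $g^{-1}\ga g\in K\varpi^\la K$.

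\textbf{(i)$\Rightarrow$(ii).} The Kottwitz condition is immediate. $\kappa_G$ is a homomorphism to the abelian group $\pi_1(G)$, so it is conjugation invariant. It is also trivial on $K$, since $K$ is a parahoric subgroup. Hence
\[
\kappa_G(\ga)=\kappa_G(g^{-1}\ga g)=\kappa_G(\varpi^\la)=p_G(\la).
\]
For the Newton inequality, pass to the totally ramified extension $\tilde F$ of degree $e$ over which $\ga$ is conjugate to some $t\in T(\tilde F)$. Fix a dominant weight $\check\mu$ and the corresponding Weyl module $V_{\check\mu}$ with its $\cO$-lattice $V_{\check\mu,\cO}$. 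Since $g^{-1}\ga g\in K\varpi^\la K$, the operators $(g^{-1}\ga g)^n$ lie in $K\,(K\varpi^\la K)^{n}\cdots$, and in particular all their matrix entries on $V_{\check\mu,\cO}$ have valuation $\ge n\min_{w}\langle w\la,\check\mu\rangle=-n\langle\la,\check\mu\rangle$. On the other hand, the eigenvalues of $\ga^n$ on $V_{\check\mu}$ include $\check\mu(t)^n$ after applying a Weyl element, and the valuation of an eigenvalue is bounded below by the minimal valuation of the matrix entries. This gives $\langle \nu_\ga,\check\mu\rangle\le\langle\la,\check\mu\rangle$ for every dominant $\check\mu$. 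Applying this to the fundamental weights, and to their negatives on the center, shows that $\la-\nu_\ga$ is a nonnegative rational combination of simple coroots. The central part is fixed by the Kottwitz condition; here one should check that $\nu_\ga$ and $\la$ have the same central component, which follows from $\kappa_G$ tensored with $\bbQ$. This is the spectral-radius argument of Kottwitz--Viehmann.

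\textbf{(ii)$\Rightarrow$(i).} This is the substantive direction, and I would follow \cite{KoV} and \cite{Chi}. The first step is to reduce to the case where $\ga$ is \emph{elliptic} in a Levi subgroup. Let $M$ be the centralizer of the maximal $F$-split subtorus of $G_\ga$. Then $\ga$ is elliptic in $M$, and one can choose the Borel $B$ so that $\nu_\ga$ is $M$-central up to conjugation. Using the Iwasawa decomposition for a parabolic $P=MN$, together with the fact that $\ga$ regular semisimple makes $n\mapsto \ga^{-1}n^{-1}\ga n$ surjective on $N(F)$, the question becomes a question about $M$. One needs the following: if $\la$ satisfies (ii) for $G$, then there is an $M$-dominant $\mu\le\la$ such that $K\varpi^\la K\cap \varpi^\mu N(F)K\ne\emptyset$ (Mirković--Vilonen / Kottwitz convexity) and $\mu$ satisfies (ii) for $(M,\ga)$.

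The second step handles the elliptic case. Here the Newton point is central in $M$, so (ii) reduces to $\kappa_M(\ga)=p_M(\mu)$. I would then exhibit an explicit element: a lift of a Coxeter-type element twisted by $\varpi^\mu$ in the relevant Levi, with $\ga$ conjugate to it. Alternatively, one can argue that the set of $\mu$ achieved is stable under going up in dominance order. Non-emptiness for the minimal $\mu$ comes from the Bruhat--Tits building: $\ga$ fixes a point up to a translation of length $\nu_\ga$. For larger $\mu$ one multiplies by elements of the $M$-centralizer in $K$ to move within $K\varpi^{\mu'}K$.

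\textbf{Main obstacle.} I expect the hardest step to be combinatorial: choosing $\mu$ in the Levi so that both the convexity condition and the $M$-Newton condition hold. This is essentially the saturation-type statement that $\{\mu : K\varpi^\la K\cap \varpi^\mu N K\neq\emptyset\}$ is the full set of $\mu$ in the convex hull of $W\la$ with the correct image in $\pi_1$. I would try to prove it by projecting to $X_*(Z_M)_\bbQ$ and using that this projection of the hull is exactly $\{\nu\le\la\}$.
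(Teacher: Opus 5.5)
Your argument for (i)$\Rightarrow$(ii) is the Kottwitz--Viehmann eigenvalue argument that the paper cites, and it is fine. There is one harmless slip: the lower bound on valuations of matrix entries is $\min_w\langle w\la,\check\mu\rangle=-\langle\la,-w_0\check\mu\rangle$, not $-\langle\la,\check\mu\rangle$. Since $-w_0$ permutes dominant weights, nothing changes, and the powers $n$ are not needed.

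\textbf{The gap is in (ii)$\Rightarrow$(i).} The step that actually produces a point of $X_\ga^\la$ is never supplied. Your Iwasawa reduction to $M$ is correct, but it only moves the problem to the elliptic case. There you must show that an elliptic $\ga\in M(F)$ is $M(F)$-conjugate into $K_M\varpi^\mu K_M$ whenever $\kappa_M(\ga)=p_M(\mu)$. None of your suggestions does this.
\begin{itemize}
\item A single ``Coxeter element twisted by $\varpi^\mu$'' has fixed invariants. It cannot be conjugate to an arbitrary elliptic $\ga$ with the given $\kappa_M(\ga)$, since infinitely many stable classes share that invariant.
\item The Bruhat--Tits fixed point theorem only gives a point fixed by $\ga$ modulo the centre. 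Its stabilizer is a parahoric that need not be hyperspecial (think of non-special vertices in types $C$ or $G_2$), so it does not place $\ga$ in any $K_M\varpi^{\mu}K_M$.
\item Multiplying $\ga$ by elements of $K$ changes its conjugacy class, so it cannot be used to climb in the dominance order.
\end{itemize}
Separately, the combinatorial lemma you flag is not settled by the rational statement about projecting $\mathrm{Conv}(W\la)$ to $X_*(Z_M)_\bbQ$. You need a lattice point of $\mathrm{Conv}(W\la)$ in the prescribed fibre of $X_*(T)\to\pi_1(M)$. That is a genuinely integral statement, of Kottwitz--Rapoport/Gashi type.

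\textbf{The missing idea is the one the paper uses.} It makes both the Levi reduction and the combinatorics unnecessary.
\begin{enumerate}
\item Put $\ga_\la=(\varpi^{-w_0(\la)},\ga)$. Condition (ii) gives integrality of the invariants, $a:=\chi_+(\ga_\la)\in\fC_+(\cO)$; this is (4)$\Rightarrow$(5) in Proposition~\ref{prop:KV-nonempty}. The coordinates are $\varpi^{\langle -w_0\la,\alpha_i\rangle}$ and the twisted characters $\mathrm{tr}\,\rho_i^+(\ga_\la)$, whose valuations are controlled by the eigenvalue estimate you already wrote down.
\item The extended Steinberg section gives $\epsilon_+^w(a)\in\ving^{\mathrm{reg}}(\cO)$ with the same invariants as $\ga_\la$.
\item Both elements are regular semisimple over $F$. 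The stack $[\ving^w/\mathrm{Ad}(G)]$ is a $BJ$-gerbe neutralized by $\epsilon_+^w$, and $J_a$-torsors over $F$ are trivial because $k$ is algebraically closed. Hence there is $g\in G(F)$ with $g^{-1}\ga_\la g=\epsilon_+^w(a)\in\ving^0(\cO)$.
\item By Proposition~\ref{prop:vin-cartan-decomposition}, this $g$ is a point of $X_\ga^{\la}$, and in fact of $X_\ga^{\la,\mathrm{reg}}$.
\end{enumerate}
Your first suggestion points in this direction. To work, the ``Coxeter-type element twisted by $\varpi^\mu$'' has to be a whole cross-section parametrized by the invariants, so that integrality of the invariants alone forces an integral conjugate. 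That is exactly what the Vinberg monoid supplies.
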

This is extracted from \cite[Proposition 3.1.6]{Chi}. See also \cite[Th\'eor\`eme 3.10]{Bou15}, which is proved under the assumption that $G$ is semisimple simply-connected.\par 
The implication (i)$\Rightarrow$(ii) is done in \cite[Corollary 3.6]{KoV}. For the other direction one uses the theory of Vinberg monoid. Roughly speaking, one extend the Steinberg section of the adjoint invariant quotient $G\to G//\mathrm{Ad}(G)$ to the Vinberg monoid to produce a point in $X_\ga^\la$.

\begin{thm}[Dimension formula]\label{thm:dim-formula}
Assume that $k$ is algebraically closed and its characteristic does not divide the order of Weyl group of $G$. Then 
$X_\ga^\la$ is a $k$-schemes locally of finite type. When it is non-empty, it is equidimensional of dimension 
\[\dim X_\ga^\la=\dim X_\ga^{\le\la}=\langle\rho,\la\rangle+\frac{1}{2}(d(\ga)-c(\ga))\]
where 
\begin{itemize}
\item $\rho$ is half sum of the positive roots for $G$;
\item $d(\ga)$ is the discriminant valuation of $\ga$;
\item $c(\ga)=\mathrm{rank}(G)-\mathrm{rank}_F(G_\ga)$, the difference between the dimension of the maximal torus of $G$ and the dimension of the maximal $F$-split subtorus of the centralizer $G_\ga$.
\end{itemize}
\end{thm}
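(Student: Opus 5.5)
We will follow the strategy the introduction outlines: realize $X_\ga^\la$ as a local piece of a global moduli space built from the Vinberg monoid $\ving$, and compute dimensions there using a product formula and deformation theory. The first step is local finiteness. Let $\mathrm{Gr}$ be the affine Grassmannian. Then $X_\ga^\la$ is the locally closed sub-ind-scheme of $\mathrm{Gr}$ cut out by the condition $g^{-1}\ga g\in K\varpi^\la K$, and it contains the closed piece $X_\ga^{\le\la}$ defined by $g^{-1}\ga g\in\overline{K\varpi^\la K}$. We will show that $X_\ga^{\le\la}$ is a closed subscheme of $\mathrm{Gr}$ that is locally of finite type. The argument bounds the relative position of $g\cdot K$ and $\ga g\cdot K$, which gives a closed condition. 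We then intersect with the fibers of the map $\mathrm{Gr}\to\mathrm{Gr}_{G_\ga}$-type lattice data and reduce to the well-known finiteness argument for affine Springer fibers. The lattice $\Lambda_\ga\subset G_\ga(F)$ of translations acts freely with quasi-compact quotient $\Lambda_\ga\backslash X_\ga^{\le\la}$. Equivalently, via $\ving$, the condition becomes $g^{-1}\ga g\in\ving(\cO)$ with fixed abelianization $\varpi^\la$, which is visibly a closed condition on $\mathrm{Gr}$.

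The second step is the dimension formula, obtained by a global-to-local argument in the style of Ngô's proof for affine Springer fibers. Choose a smooth projective curve $C$ over $k$ of large genus and a global section of the ``multiplicative Hitchin base'' $\cA$. The base $\cA$ parametrizes maps $C\to[\ving^{\mathrm{ab}}\times_{\ving^{\mathrm{ab}}/T}\fC/Z_+]$, where $\fC=\ving/\!/\mathrm{Ad}(G)$. We take the section so that at one point $v$ it is the invariant $\chi(\ga)$ of $\ga$, with boundary divisor of type $\la$ at $v$. Over $\cA$ sits the multiplicative Hitchin fibration $\cM\to\cA$. We then prove a product formula
\[\cM_a\simeq \cP_a\times^{\prod_v\cP_v}\prod_v X_{\ga_v}^{\la_v},\]
up to a universal homeomorphism, and compute $\dim\cP_a$ by Riemann--Roch on the cameral curve. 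The local dimension $\dim\cP_v=\tfrac12(d(\ga)-c(\ga))$ comes from Bezrukavnikov's formula, applied to the local Néron model of the centralizer $J_a$. We then use the Steinberg section, as in the proof of Theorem~\ref{thm:nonempty}, to show that the regular locus $X_\ga^{\la,\mathrm{reg}}$ is a $\cP_v$-torsor, hence dense of the expected dimension. Finally, a deformation-theoretic smoothness statement for $\cM$ over the open locus where $\deg$ is large (the $H^1$ obstruction vanishes by Serre duality) gives that $\cM$ is smooth of dimension $\dim\cA+\dim\cP$.

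The third step combines these to obtain equidimensionality. We will show that the $\delta$-stratification of $\cA$ satisfies the codimension estimate $\mathrm{codim}\,\cA_\delta\ge\delta$ near our chosen point. Together with the flatness/dimension count of $\cM\to\cA$, this forces every irreducible component of $\cM_a$, and hence of $X_\ga^{\le\la}$, to have dimension at most $\dim\cP_a$. The dense regular orbit gives the lower bound. Because the strata $X_\ga^{\mu}$ for $\mu<\la$ have strictly smaller dimension $\langle\rho,\mu\rangle+\ldots$, the open piece $X_\ga^\la$ is dense in $X_\ga^{\le\la}$, so the two have the same dimension. The $\langle\rho,\la\rangle$ term comes from $\dim$ of $K\varpi^\la K/K$ entering through the discriminant of $\ving$ at the boundary.

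We expect the main obstacle to be the codimension estimate for $\delta$-strata in the multiplicative setting, together with the density of the regular locus in every component. Without these, one gets only an upper bound. We would first try to transport Ngô's support-theorem-free argument: deform $a$ inside $\cA$ while keeping the singularity at the other points transversal, and compare $\cM$ locally with the Lie algebra case. For that comparison we would use the Kottwitz--Viehmann reduction to Levi subgroups when $\ga$ is not elliptic, which accounts for $c(\ga)$.
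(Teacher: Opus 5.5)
There is a genuine gap. It sits exactly where Kottwitz--Viehmann varieties part ways with affine Springer fibers.

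\textbf{The regular locus is not dense.} You claim the Steinberg section makes $X_\ga^{\la,\mathrm{reg}}$ a single $P_a$-torsor, ``hence dense''. It is in general a finite union of $P_a$-orbits, indexed by components of the extended Steinberg fiber $(\chi_+^{\mathrm{reg}})^{-1}(\bar a)$, at most $\#\mathrm{Cox}(W,S)$ of them. That part is harmless for $\dim X_\ga^{\la,\mathrm{reg}}=\dim P_a$. Density, however, fails in general. For hyperbolic $\ga$ the quotient $[X_\ga^\la/G_\ga(F)]$ has $m_{\la\mu}$ irreducible components, a number that grows with $\la$, while at most $\#\mathrm{Cox}(W,S)$ of them meet the regular locus. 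The $\mathrm{SL}_3$ example with $\la=(2,0,-2)$ exhibits the component $\overline{Y_3}$, whose generic point reduces mod $\varpi$ to an element with $4$-dimensional centralizer. So neither the upper bound nor equidimensionality can be read off from a dense regular orbit. The real content of the theorem is bounding the irregular locus, and any lower bound must also hold on irregular components.

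There is also a slip in the local dimension. One has $\dim P_a=\langle\rho,\la\rangle+\frac12(d(\ga)-c(\ga))$, not $\frac12(d(\ga)-c(\ga))$. In the $\mathrm{SL}_3$ example $d(\ga)=c(\ga)=0$, yet $P^0_\ga=T(\cO/\varpi^2)$ has dimension $4=\langle\rho,\la\rangle$. Attributing this term to $\dim K\varpi^\la K/K=\langle2\rho,\la\rangle$ is off by a factor of two.

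\textbf{Your global step does not give the upper bound.} The estimate $\mathrm{codim}\,\cA_\delta\ge\delta$ is a lower bound on the codimension of base strata, the input to a support theorem. It cannot cap fiber dimension. Flatness of $\cM\to\cA$ via miracle flatness \emph{presupposes} $\dim\cM_a\le\dim\cP_a$. In Ng\^o's Lie algebra argument that bound is supplied locally by the estimate $\dim(X_\ga-X_\ga^{\mathrm{reg}})<\dim X_\ga$ of Kazhdan--Lusztig and Ng\^o, which is precisely what fails here. So your step 3 is circular.

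\textbf{The missing idea is a specialization argument.} First, via the product formula $\prod_v[X_{a_v}/P_{a_v}]\cong[\cM_a/\cP_a]$ and a local--global approximation realizing the given $(\ga,\la)$ as a local factor of an anisotropic global $a$, the paper reduces to showing $\dim\cM_a=\dim\cP_a$. It then uses two facts:
\begin{enumerate}
\item $f_\cL$ is \emph{proper} over $\cA_\cL^{\mathrm{ani}}$, so $a\mapsto\dim\cM_a$ is upper semicontinuous on the base.
\item The $\cP_a$ form a smooth family over $\cA_\cL$, so $\dim\cP_a$ is constant.
\end{enumerate}
One specializes $a$ to a point $a_0$ whose local singularities are all either hyperbolic or have $\la_v=0$. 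In those cases the formula is proved by purely local means; in the hyperbolic case this is via the Iwasawa decomposition and MV cycles. This gives $\dim\cM_a\le\dim\cM_{a_0}=\dim\cP_{a_0}=\dim\cP_a$.

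Without properness and this choice of base cases, your outline yields only the dimension of the regular locus.
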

This is established in \cite{Bou15} and \cite{BC18} in the case $G$ is semisimple simply-connected and in \cite{Chi} for any split connected reductive group. We remark this result can be viewed as the analogue of the dimension formula for affine Deligne-Lusztig varieties. On the other hand, it can also be viewed as a generalization of the dimension formula for affine Springer fibers established in \cite{KL} and \cite{Be96}. In fact, 
a large part of the proof proceeds by generalizing the methods in \emph{loc. cit.}, with one new ingredient. To be more precise, the proof starts by analyzing the natural action of the loop group $LG_\ga$ on $X_\ga^\la$.  We first study the \emph{regular locus} $X_\ga^{\la,\mathrm{reg}}$, which is the union of the maximal $LG_\ga$-orbits on $X_\ga^\la$. The formal definition of $X_\ga^{\la,\mathrm{reg}}$ will use the theory of Vinberg monoid. Generalizing the methods for affine Springer fibers, one can show that $\dim X_\ga^{\la,\mathrm{reg}}$ is given by the desired formula and it remains to show that $\dim X_\ga^\la=\dim X_\ga^{\la,\mathrm{reg}}$. 
For this step, the methods in \cite{KL} fail to generalize. The failure is related to some interesting phenomena on irreducible components of $X_\ga^\la$ and $X_\ga^{\la,\mathrm{reg}}$ that we discuss in the next section. To finish this last step for dimension formula, we took a new approach using global methods that we discuss in \S\ref{sec:Hitchin}. Currently there is no purely local proof of this step and the method we use does not seem to generalize to mixed-characteristic case.

\subsection{Irreducible components of Kottwitz-Viehmann varieties}\label{sec:irr-components}
In general $X_\ga^{\la}$ may have infinitely many irreducible components since the loop group $G_\ga(F)$ that acts on it may have infinitely many connected components. However, one can show that the quotient stack $[X_\ga^\la/G_\ga(F)]$ always has finitely many irreducible components and we are interested in the number of its components. We have the following conjectural description of this number:
\begin{conjecture}\label{conj:irr-components}
Let $\ga\in G(F)^{\mathrm{rs}}$ and $\la\in X_*(T)_+$. Let $\mu\in X_*(T)_+$ be the minimal dominant integral coweight that dominates the Newton point $\nu_\ga\in X_*(T)_{\bbQ,+}$. Then $\#\mathrm{Irr}[X_\ga^\la/G_\ga(F)]$, or in other words, the number of $G_\ga(F)$-orbits on the set $\mathrm{Irr}(X_\ga^\la)$, equals to the weight multiplicity $m_{\la\mu}$ (which is defined as the dimension of the $\mu$-weight space in the irreducible representation of the Langlands dual group $\hat{G}$ with highest weight $\la$).
\end{conjecture}

\begin{thm}\label{thm:irr-component-hyperbolic}
Conjecture~\ref{conj:irr-components} is true when $\ga$ is hyperbolic, i.e. $G(F)$ conjugate to a strongly regular semisimple element in $T(F)$.
\end{thm}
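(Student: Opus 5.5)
We prove the hyperbolic case by reducing to the geometry of semi-infinite orbits and Mirković–Vilonen cycles. After conjugating we may assume $\ga\in T(F)$ is strongly regular semisimple. Then $G_\ga=T$, and $G_\ga(F)=T(F)$ acts on $X_\ga^\la$ through $T(F)/T(\cO)\cong X_*(T)$, i.e. through the translations $\varpi^\mu$. In this case the Newton point $\nu_\ga$ is the dominant representative of the $W$-orbit of $\mathrm{val}(\ga)\in X_*(T)$, so it is integral. Hence the minimal dominant coweight $\mu$ dominating it is $\nu_\ga$ itself. We must therefore show that the number of $X_*(T)$-orbits on $\mathrm{Irr}(X_\ga^\la)$ equals $m_{\la\nu_\ga}$. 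Replacing $\ga$ by a $W$-conjugate, we may take $\mathrm{val}(\ga)=\nu:=\nu_\ga$ dominant, and then choose $B$ (or its opposite) compatibly.

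The first step is an Iwasawa decomposition. Write $G(F)/K=\bigsqcup_{\eta\in X_*(T)}S_\eta$, where $S_\eta=U(F)\varpi^\eta K/K$ are the semi-infinite orbits. The intersection $X_\ga^\la\cap S_\eta$ consists of the points $u\varpi^\eta K$ with $\varpi^{-\eta}u^{-1}\ga u\varpi^\eta\in K\varpi^\la K$. Since $T(F)$ translates $S_\eta$ to $S_{\eta+\eta'}$, each $T(F)$-orbit of components meets $S_0$. It therefore suffices to study
\[Y:=\{u\in U(F)/U(\cO) : u^{-1}\ga u\in K\varpi^\la K\}.\]
For this we use the map $u\mapsto u^{-1}\ga u\,\ga^{-1}$. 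Because $\ga$ is regular with $\mathrm{val}$ of the roots controlled (here strong regularity plus $\mathrm{char}\nmid|W|$ enters), this map is a bijection $U(F)\to U(F)$ by the standard triangular argument on root subgroups. It turns $Y$ into a locally closed piece of $U(F)\ga K/K\cap K\varpi^\la K/K=S_\nu\cap\mathrm{Gr}_\la$, modulo a twisting by the contracting action. More precisely, we get a morphism from $Y$ onto $S_\nu\cap \mathrm{Gr}_\la$ that is an affine-space fibration of fibre dimension $\frac12 d(\ga)$-type. The fibration comes from conjugation by $\ga$ rescaling the root subgroups $U_\alpha$ by $\mathrm{val}\,\alpha(\ga)=\langle\alpha,\nu\rangle$.

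The second step is to invoke Mirković–Vilonen: $S_\nu\cap\mathrm{Gr}_\la$ is equidimensional of dimension $\langle\rho,\la+\nu\rangle$, and its number of irreducible components equals $m_{\la\nu}$. We then check that the dimension bookkeeping matches Theorem~\ref{thm:dim-formula}. For hyperbolic $\ga$ we have $c(\ga)=0$ and $d(\ga)=\sum_\alpha|\langle\alpha,\nu\rangle|=2\langle\rho,\nu\rangle$. So the formula gives $\langle\rho,\la\rangle+\langle\rho,\nu\rangle$, which equals $\dim(S_\nu\cap\mathrm{Gr}_\la)$. This is consistent with $Y\to S_\nu\cap\mathrm{Gr}_\la$ being bijective on components. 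Combined with the equidimensionality from Theorem~\ref{thm:dim-formula}, the top-dimensional components of $X_\ga^\la$ modulo $T(F)$ then correspond bijectively to MV cycles, giving $m_{\la\nu}$.

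The main obstacle is making the comparison between $Y$ and $S_\nu\cap\mathrm{Gr}_\la$ precise. The naive map $u\mapsto u^{-1}\ga u$ lands in $U(F)\ga$, but membership in $K\varpi^\la K$ is tested after the right $K$-quotient, and the fibres need not obviously be affine spaces of constant dimension. I would first try to handle this with a filtration of $U$ by root height. Along this filtration, conjugation $u\mapsto \ga^{-1}u^{-1}\ga u$ is a successive extension of the linear maps $x\mapsto(\alpha(\ga)^{-1}-1)x$ on $U_\alpha(F)$. Each of these maps has valuation shift $\min(0,-\langle\alpha,\nu\rangle)$ and induces an isomorphism of the ind-schemes of $U_\alpha(F)/U_\alpha(\cO)$ up to finite-dimensional affine kernels and cokernels. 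This yields the affine fibration, from which irreducible components correspond.
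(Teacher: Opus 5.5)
Your overall strategy is the paper's: conjugate $\ga$ into $\varpi^\nu T(\cO)$ with $\nu$ dominant, use the Iwasawa stratification to reduce to $Y$, use $f_\ga(u)=u^{-1}\ga u\ga^{-1}$, and conclude with Mirkovi\'c--Vilonen. But the central comparison step is wrong as you state it. There is no morphism from $Y$ onto $S_\nu\cap\mathrm{Gr}_\la$, let alone an affine-space fibration, for two reasons.

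\emph{The map is not well defined.} The assignment $u\mapsto f_\ga(u)\varpi^\nu K$ does not descend to $U(F)/U(\cO)$: replacing $u$ by $uk$ with $k\in U(\cO)$ replaces $f_\ga(u)\varpi^\nu K$ by $k^{-1}f_\ga(u)\varpi^\nu K$.

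\emph{The dimensions do not match.} Your value $d(\ga)=2\langle\rho,\nu\rangle$ is incorrect. In $d(\ga)=\mathrm{val}\prod_\alpha(1-\alpha(\ga))$, the roots with $\langle\alpha,\nu\rangle<0$ contribute $\langle\alpha,\nu\rangle<0$, and the roots with $\langle\alpha,\nu\rangle=0$ may contribute positively. Hence $d(\ga)=2\delta-\langle2\rho,\nu\rangle$ with $\delta:=\sum_{\alpha>0}\mathrm{val}(1-\alpha(\ga))\ge 0$. Your ``valuation shift $\min(0,-\langle\alpha,\nu\rangle)$'' drops the same $\delta$. So $\dim Y=\langle\rho,\la-\nu\rangle+\delta$, which is typically strictly smaller than $\dim(S_\nu\cap\mathrm{Gr}_\la)=\langle\rho,\la+\nu\rangle$. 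Concretely, take $G=\mathrm{SL}_2$, $\la=\nu=\alpha^\vee$ and $\ga=\mathrm{diag}(\varpi a,\varpi^{-1}a^{-1})$. A direct check shows $Y$ is a single point while $S_\nu\cap\mathrm{Gr}_\nu\cong\bbA^2$. No root-height filtration can produce the fibration you want.

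The missing idea is to compare $Y$ with the MV cycle through a \emph{correspondence} inside $U(F)$, before taking any quotient, as the paper does.
\begin{itemize}
\item Set $Z:=U(F)\cap K\varpi^\la K\varpi^{-\nu}$ and $\widetilde Y:=f_\ga^{-1}(Z)$. Then $f_\ga:\widetilde Y\to Z$ is an isomorphism, by your triangular argument.
\item $\widetilde Y$ is stable under right multiplication by $U(\cO)$, with quotient $Y$. Writing $\ga=\varpi^\nu t$, use $f_\ga(uk)\varpi^\nu=k^{-1}f_\ga(u)\varpi^\nu\cdot tkt^{-1}$.
\item The map $z\mapsto z\varpi^\nu K$ identifies $S_\nu\cap\mathrm{Gr}_\la$ with the quotient of $Z$ by right multiplication by the \emph{smaller} group $\varpi^\nu U(\cO)\varpi^{-\nu}$.
\end{itemize}
Both legs are torsors under connected pro-unipotent groups, so after finite-type truncation they are affine-space fibrations. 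This gives $\mathrm{Irr}(Y)\cong\mathrm{Irr}(Z)\cong\mathrm{Irr}(S_\nu\cap\mathrm{Gr}_\la)$, a set with $m_{\la\nu}$ elements. The dimension shift $\delta-\langle2\rho,\nu\rangle$ comes from the Jacobian of $f_\ga$ together with the codimension of $\varpi^\nu U(\cO)\varpi^{-\nu}$ in $U(\cO)$.

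Finally, you need equidimensionality of $Y$ to identify components of the strata with $T(F)$-orbits of components of $X_\ga^\la$. Take it from MV equidimensionality via this correspondence, not from Theorem~\ref{thm:dim-formula}. In the paper, the proof of the general dimension formula specializes to the hyperbolic case, so citing it here is circular.
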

We give a brief outline of the proof as the argument also proves the dimension formula in the hyperbolic case. We may assume that $\ga\in\varpi^\mu T(\cO)\cap G(F)^{\mathrm{rs}}$ where $\mu\in X_*(T)_+$ is the Newton point of $\ga$. Then $G_\ga=T$ and by the Iwasawa decomposition, $X_\ga^\la$ has a stratification whose strata correspond bijectively to $X_*(T)$ and each stratum is isomorphic to 
\[Y_\ga^\la=\{u\in U(F)/U(\cO)| u^{-1}\ga u\in K\varpi^\la K\}.\]
Here $U$ is the unipotent radical in the Borel $B$. Then we have
\[\dim X_\ga^\la=\dim Y_\ga^\la\]
and there is a natural bijection
\[\mathrm{Irr}(Y_\ga^\la)\xrightarrow{\sim}\mathrm{Irr}([X_\ga^\la/G_\ga(F)]).\]
Consider the map $f_\ga:U(F)\to U(F)$ defined by $f_\ga(u)=u^{-1}\ga u\ga^{-1}$. Then we have the following diagram
\[\xymatrix{ 
f_\ga^{-1}(U(F)\cap K\varpi^\la K\varpi^{-\mu})\ar[r]^{f_\ga}\ar[d] & U(F)\cap K\varpi^\la K\varpi^{-\mu}\ar[d]\\
Y_\ga^\la & S_{\la\mu}
}\]
where $S_{\la\mu}=(U(F)\varpi^\mu K\cap K\varpi^\la K)/K$ is the MV cycle in the affine Grassmanian $\mathrm{Gr}_G=G(F)/K$. The left vertical map is quotient by $U(\cO)$ and the right vertical map sends $g$ to $g\varpi^\mu K/K$. The 3 maps in the diagram are (projective limits of) smooth fibrations with fibers isomorphic to $\bbA^n$ for some $n$. In particular, they induce bijections on the set of irreducible components. We know from geometric Satake theory that $S_{\la\mu}$ has $m_{\la\mu}$ irreducible components, hence $Y_\ga^\la$ and $[X_\ga^\la/G_\ga(F)]$ also have $m_{\la\mu}$ irreducible components. Moreover, by passing to finite type quotients, one can
analyze the relative dimension of the 3 maps and deduce the dimension formula for $Y_\ga^\la$ (and hence $X_\ga^\la$) from the dimension formula for the MV cycle $S_{\la\mu}$. 

\section{Comparison with affine Springer fibers}\label{sec:ASF}
In this section we first recall the geometric properties of affine Springer fibes and then compare them with Kottwitz-Viehmann varieties.
\subsection{Affine Springer fiber in the affine Grassmanian}
For a regular semisimple element $\ga\in\fg^{\mathrm{rs}}(F)$ in the Lie algebra, the affine Springer fiber $X_\ga$ is the reduced closed subscheme of the affine Grassmanian $\mathrm{Gr}_G=LG/L^+G$ whose set of closed point is
\[X_\ga=\{g\in G(F)/G(\cO)| \mathrm{ad}(g)^{-1}(\ga)\in\fg(\cO)\}\]
Let $I\subset G(\cO)$ be the Iwahori subgroup consisting of elements in $G(\cO)$ whose reduction mod $\varpi$ lies in the Borel $B$. We also view $I$ as an affine group scheme (of infinite type) over the residue field $k$. One also studies the affine Springer fiber $Y_\ga$ in the affine flag variety $\mathrm{Fl}_G=LG/I$ whose set of closed point is 
\[Y_\ga=\{g\in G(F)/I | \mathrm{ad}(g)^{-1}(\ga)\in\mathrm{Lie}(I)\}.\]
By definition, there is a natural morphism $p:Y_\ga\to X_\ga$.\par 
In \cite{KL}, $Y_\ga$ is introduced as the affine analogue of the classical Springer fibers and is the main object of study in \emph{loc.cit.} In this article we are more interested in $X_\ga$ since it is the Lie algebra analogue of the Kottwitz-Viehmann varieties. However, to study the geometric properties of $X_\ga$ along the lines in \cite{KL}, one relies heavily on $Y_\ga$.\par 
Let $G_\ga$ be the stabilizer of $\ga\in\fg(F)$ under the adjoint action of $G$. Since $\ga$ is regular semisimple, $G_\ga$ is a maximal torus defined over $F$. As in the case of Kottwitz-Viehmann varieties, the loop group $LG_\ga$ acts naturally on $X_\ga$. When $k$ is an algebraic closure of a finite field, the quotient $[X_\ga/LG_\ga]$ encodes the orbital integral $O_\ga(1_{\fg(\cO)})$, as well as its stable and unstable variants. This is the starting point of Ng\^o's proof of the Langlands-Shelstad fundamental lemma for Lie algebras in the function field setting, cf. \cite{Ngo10}. In the following we review the methods for studying geometric properties of $X_\ga$.

\subsection{Chevalley morphism}
First we recall some standard facts about the Chevalley morphism. The \emph{Chevalley} base is defined as
\[\fc:=\fg//\mathrm{ad}(G)=\spec k[\fg]^G,\] 
the invariant quotient of the Lie algebra under the adjoint action of $G$. Restricting regular functions on $\fg$ to $\ft:=\mathrm{Lie}(T)\subset\fg$, we get an isomorphism $k[\fg]^G\cong k[\ft]^W$. This implies that the algebra $k[\fg]^G$ is isomorphic to a free polynomial algebra in $r$ generators where $r$ is the rank of $G$.\par 
The natural morphism $\chi:\fg\to\fc$ is called the \emph{Chevalley morphism}. The morphism $\chi$ is surjective flat with integral fibers. In each fiber there is a unique open $G$-orbit (the regular conjugacy class) and a unique closed $G$-orbit (the semisimple conjugacy class). The regular locus $\fg^{\mathrm{reg}}$ is the union of the open orbits in each fiber and it is an open subset of $\fg$ whose complement has codimension 3.\par
There is an open subset $\fc^{\mathrm{rs}}\subset\fc$ consisting of regular semisimple conjugacy class. The fiber of $\chi$ over $\fc^{\mathrm{rs}}$ consists of a single conjugacy class, which is both regular and semisimple. There union is the regular semisimple open subset $\fg^{\mathrm{rs}}=\chi^{-1}(\fc^{\mathrm{rs}})$ of $\fg$. The complement $\fD_\fg=\fc\setminus\fc^{\mathrm{rs}}$ is the discriminant divisor, defined as the vanishing loci of the discriminant function
\[D_\fg:=\prod_{\alpha\in\Phi}d\alpha\in k[\ft]^W\cong k[\fg]^G\]
where the product ranges over the set of roots. 

\subsection{Regular centralizer for Lie algebra and Kostant section}
The restriction $\chi^{\mathrm{reg}}:\fg^{\mathrm{reg}}\to\fc$ of the Chevalley morphism to the regular open subset is smooth and surjective. There is a morphism $\epsilon_\fg:\fc\to\fg^{\mathrm{reg}}$, constructed by Kostant, that defines a section of $\chi^{\mathrm{reg}}$. We refer to it as the \emph{Kostant section}. The morphism $G\times\fc\to\fg^{\mathrm{reg}}$ defined by $(g,a)\mapsto\mathrm{ad}(g)\epsilon_\fg(a)$ is smooth surjective and induces an isomorphism $(G\times\fc)/J\cong\fg^{\mathrm{reg}}$. Taking quotients by $G$, we see that $\epsilon_\fg$ induces an isomorphism $BJ\cong[\fg^{\mathrm{reg}}/G]$. In other words, $[\fg^{\mathrm{reg}}/G]$ is a $BJ$-gerbe neutralized by the Kostant section $\epsilon_\fg$.
\par 
Let $I_\fg$ be the group scheme over $\fg$ whose fiber over $\ga$ is the centralizer $G_\ga$. The \emph{regular centralizer} is the group scheme $J:=\epsilon_\fg^*I_\fg$ over $\fc$. One can also define $J$ by descending $I_\fg$ along the smooth morphism $\chi^{\mathrm{reg}}$. The basic fact is that $J$ is a smooth commutative group scheme over $\fc$ and its restriction to $\fc^{\mathrm{rs}}$ is a torus.\par 
Now let us come back to the affine Springer fiber $X_\ga$ for $\ga\in\fg^{\mathrm{rs}}(F)$. The \emph{regular locus} of $X_\ga^{\mathrm{reg}}$ is the open subscheme of $X_\ga$ defined by
\[X_\ga^{\mathrm{reg}}=\{g\in G(F)/G(\cO)|\mathrm{ad}(g)^{-1}\ga\in\fg^{\mathrm{reg}}(\cO)\}.\]
Using the Kostant section, we get the following non-emptiness criterion of $X_\ga$. See \cite[\S4]{Ngo10} for more details.
\begin{prop}
The following are equivalent:
\begin{enumerate}
    \item $X_\ga$ is non-empty;
    \item $X_\ga^{\mathrm{reg}}$ is non-empty;
    \item $\chi(\ga)\in\fc(\cO)$.
\end{enumerate}
\end{prop}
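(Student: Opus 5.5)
The plan is to prove the cycle of implications (2)$\Rightarrow$(1)$\Rightarrow$(3)$\Rightarrow$(2).

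The implication (2)$\Rightarrow$(1) is immediate, since $X_\ga^{\mathrm{reg}}$ is an open subscheme of $X_\ga$.

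For (1)$\Rightarrow$(3), take $g\in X_\ga$, so that $\ga':=\mathrm{ad}(g)^{-1}\ga\in\fg(\cO)$. The Chevalley morphism $\chi:\fg\to\fc$ is a morphism of $k$-schemes and is $G$-invariant. Hence $\chi(\ga)=\chi(\ga')$, and $\chi$ sends $\fg(\cO)$ into $\fc(\cO)$. Concretely, the invariant polynomials generating $k[\fg]^G$ have coefficients in $k$, so they take values in $\cO$ on matrices with entries in $\cO$.

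For (3)$\Rightarrow$(2), set $a:=\chi(\ga)\in\fc(\cO)$. I use the Kostant section to define $\ga_0:=\epsilon_\fg(a)\in\fg^{\mathrm{reg}}(\cO)$, which satisfies $\chi(\ga_0)=a$. It then suffices to find $g\in G(F)$ with $\mathrm{ad}(g)\ga_0=\ga$, because then $\mathrm{ad}(g)^{-1}\ga=\ga_0\in\fg^{\mathrm{reg}}(\cO)$ and so $gK\in X_\ga^{\mathrm{reg}}$.

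\begin{itemize}
\item Over $F$, both $\ga$ and $\ga_0$ are regular semisimple elements with the same image $a\in\fc^{\mathrm{rs}}(F)$. The fiber of $\chi$ over $a$ is a single $G$-orbit, so the two elements are conjugate over $\bar F$.
\item To descend the conjugacy to $F$, I consider the transporter $\{g:\mathrm{ad}(g)\ga_0=\ga\}$. Using the isomorphism $(G\times\fc)/J\cong\fg^{\mathrm{reg}}$, this transporter is a torsor under $J_a=G_{\ga_0}$, which is a torus over $F$. Its class lies in $H^1(F,J_a)$.
\item This class vanishes because $F=k((\varpi))$ with $k$ algebraically closed has cohomological dimension $\le 1$. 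By Steinberg's theorem, $H^1(F,H)=0$ for every connected linear group $H$, and in particular for the torus $J_a$.
\end{itemize}

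I expect this descent to be the only step with real content. An alternative is to show that the image of $G(F)\times\fc(F)\to\fg(F)$ contains all of $\fg^{\mathrm{reg}}(F)$ using that $[\fg^{\mathrm{reg}}/G]\cong BJ$, which reduces to the same vanishing of $H^1(F,J_a)$.
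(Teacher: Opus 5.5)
Your proof is correct and follows the same route the paper indicates by pointing to Ng\^o's argument: the Kostant section produces $\epsilon_\fg(a)\in\fg^{\mathrm{reg}}(\cO)$, and the vanishing of $H^1(F,J_a)$ makes $\ga$ conjugate to it under $G(F)$. One small correction: in positive characteristic $F=k((\varpi))$ is imperfect, so Steinberg's theorem does not hold for every connected linear group, but its Borel--Springer extension to reductive groups (in particular to the torus $J_a$) applies because $[F:F^p]=p$, and that is all your argument needs.
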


So from now on we assume that $a:=\chi(\ga)\in\fc(\cO)$. Let $J_a$ be the $\cO$-group scheme defined as the pull-back of the regular centralizer $J$ along the morphism $a:\spec\cO\to\fc$. Then $J_a$ is an integral model of $G_\ga$. Consider the quotient $P_a:=\mathrm{Gr}_{J_a}=G_\ga(F)/J_a(\cO)$, which is a finite dimensional commutative $k$-group.

\begin{prop}\label{prop:ASF-reg-torsor}
The natural action of the loop group $G_\ga(F)$ on $X_\ga$ factors through the quotient $P_a$ and the regular locus $X_\ga^{\mathrm{reg}}$ is a torsor under $P_a$. In particular, $\dim X_\ga^{\mathrm{reg}}=\dim P_a$.
\end{prop}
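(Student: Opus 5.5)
The plan is to use the Kostant section to identify $X_\ga^{\mathrm{reg}}$ with the set of $\cO$-integral structures on the "regular" conjugacy datum of $\ga$, and then to read off the $P_a$-action from the regular centralizer $J$.

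\textbf{Step 1: the action factors through $P_a$.} For $h\in G_\ga(F)$ and $x=gK\in X_\ga$, set $\ga_x:=\mathrm{ad}(g)^{-1}\ga\in\fg(\cO)$, with $\chi(\ga_x)=a$. There is a canonical morphism $\chi^*J\to I_\fg$ over $\fg$. It is an isomorphism over $\fg^{\mathrm{reg}}$, and it exists over all of $\fg$ by extension across the codimension-$3$ complement, using normality of $\fg$ and smoothness of $J$. Pulling back along $\ga_x:\spec\cO\to\fg$ gives a homomorphism $J_a(\cO)\to G_{\ga_x}(\cO)=g^{-1}G_\ga(F)g\cap K$. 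Over $F$ it is compatible with the identification $J_a\otimes F\cong G_\ga$ given by conjugation by $g$. Hence every element of $J_a(\cO)\subset G_\ga(F)$ fixes $x$, so the action factors through $P_a=G_\ga(F)/J_a(\cO)$. The finite-dimensionality of $P_a$ is taken as given from the text.

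\textbf{Step 2: $X_\ga^{\mathrm{reg}}$ is a $P_a$-torsor.} First, non-emptiness. Since $a\in\fc(\cO)$, the element $\epsilon_\fg(a)\in\fg^{\mathrm{reg}}(\cO)$ is $G(\bar F)$-conjugate to $\ga$, because both are regular semisimple with the same image in $\fc$. The transporter $\{g: \mathrm{ad}(g)\epsilon_\fg(a)=\ga\}$ is a torsor under $G_\ga$ over $F$. To show it has an $F$-point, I would use that $[\fg^{\mathrm{reg}}/G]\cong BJ$, so the obstruction is a class in $H^1(F,J_a)\to H^1(F,G)$. Since $k$ is algebraically closed, $F$ has cohomological dimension $\le1$, and Steinberg's theorem gives $H^1(F,G)=0$. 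Therefore the transporter has an $F$-point $g_0$, and $g_0K\in X_\ga^{\mathrm{reg}}$.

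Next, transitivity. Take $x,y\in X_\ga^{\mathrm{reg}}$. Both $\ga_x$ and $\ga_y$ lie in $\fg^{\mathrm{reg}}(\cO)$ over the same $a$. By smoothness of $\chi^{\mathrm{reg}}$ and the isomorphism $(G\times\fc)/J\cong\fg^{\mathrm{reg}}$, the transporter from $\ga_x$ to $\ga_y$ is a $J_a$-torsor over $\cO$. A smooth group scheme over the henselian ring $\cO$ with algebraically closed residue field has trivial torsors, by Lang's theorem on the special fiber plus Hensel lifting. So there is $k_0\in K$ with $\mathrm{ad}(k_0)\ga_x=\ga_y$. Then $h:=g_yk_0g_x^{-1}$ centralizes $\ga$ and sends $x$ to $y$.

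Finally, freeness. If $h\in G_\ga(F)$ fixes $x$, then $g^{-1}hg\in K$ centralizes $\ga_x$, so $g^{-1}hg\in G_{\ga_x}(\cO)$. Since $\ga_x$ is regular, $I_\fg$ restricted along $\ga_x$ is identified with $J_a$ over $\cO$, so $h\in J_a(\cO)$. To upgrade this to an isomorphism of $k$-schemes rather than just a bijection on points, I would run the same argument for $R$-points with $R$ a $k$-algebra, using the smooth-torsor description on $\cO\hat\otimes R$. Reducing to points suffices if one only needs dimensions.

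\textbf{Step 3: dimension.} The equality $\dim X_\ga^{\mathrm{reg}}=\dim P_a$ follows at once from the torsor statement.

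The main obstacle is Step 2: the two torsor-triviality inputs, Steinberg's theorem over $F$ and the triviality of $J_a$-torsors over $\cO$, together with the precise identification of stabilizers with $J_a(\cO)$. That identification uses the descent of $I_\fg$ along $\chi^{\mathrm{reg}}$ from the preceding subsection.
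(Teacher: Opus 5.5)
Your route is the standard one, due to Ng\^o; the paper states this proposition without proof and defers to Ng\^o's work. The homomorphism $\chi^*J\to I_\fg$ shows that $J_a(\cO)$ acts trivially. The gerbe structure $[\fg^{\mathrm{reg}}/G]\cong BJ$, together with triviality of $J_a$-torsors over $\cO$, gives simple transitivity. Step 1, the transitivity and freeness arguments, and Step 3 are fine.

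However, one inference in Step 2 is wrong as written. From $H^1(F,G)=0$ you conclude that the transporter from $\epsilon_\fg(a)$ to $\ga$ has an $F$-point. That transporter is a torsor under $G_\ga\cong J_{a,F}$, and its pushout to $G$ is automatically trivial because it is a subscheme of $G$. So its class lies in $\ker\bigl(H^1(F,J_{a,F})\to H^1(F,G)\bigr)$, and the vanishing of the target says nothing about this class. For instance, $H^1(F,\mathrm{SL}_2)=0$ over a $p$-adic field, yet stable classes of elliptic regular semisimple elements split into several rational classes.

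The input you actually need is $H^1(F,G_\ga)=0$, and it does hold. $G_\ga$ is a torus, hence connected, and $F=k((\varpi))$ with $k$ algebraically closed falls under Steinberg's theorem (Borel--Springer for imperfect $F$). Alternatively, cite the preceding proposition. The paper already relies on it when it assumes $a\in\fc(\cO)$, and it gives $X_\ga^{\mathrm{reg}}\neq\emptyset$ directly.

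A smaller point concerns the transitivity step: Lang's theorem is not the relevant tool. It concerns finite fields and connected groups, and $J_{\bar a}$ may be disconnected. The special fiber of the transporter is a nonempty smooth $k$-scheme of finite type, so it has a $k$-point simply because $k$ is algebraically closed. Hensel's lemma then lifts that point to $\cO$.

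With these two corrections your argument proves the statement on $k$-points, which suffices for the dimension claim. For the scheme-theoretic torsor statement, it is cleaner to follow Ng\^o. Identify $X_\ga^{\mathrm{reg}}$ directly with the affine Grassmannian $J_a(F)/J_a(\cO)$ through $L^+[\fg^{\mathrm{reg}}/G]\cong L^+BJ$ over $L^+\fc$, rather than redoing the pointwise argument for every $k$-algebra $R$.
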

The main result for $X_\ga$ we review is the following
\begin{thm}[\cite{KL},\cite{Be96}]
The affine Springer fiber $X_\ga$, when nonempty, is a locally of finite type scheme of dimension
\[\dim X_\ga=\frac{d_\ga-c_\ga}{2}\]
where
\begin{itemize}
    \item $d_\ga=\mathrm{val}D_\fg(\ga)$ is the discrmininant valuation;
    \item $c_\ga=\dim T-\mathrm{rank}_FG_\ga$. 
\end{itemize}
\end{thm}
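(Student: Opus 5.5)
Two things have to be shown: $\dim X_\ga^{\mathrm{reg}}=\frac{d_\ga-c_\ga}{2}$, and the complement $X_\ga\setminus X_\ga^{\mathrm{reg}}$ has dimension at most this number. Together these give the formula. Finite type of $X_\ga$ modulo a lattice in $G_\ga(F)$ is standard: $X_\ga$ is a closed subscheme of $\mathrm{Gr}_G$, and the action of a free abelian lattice $\Lambda_\ga\subset G_\ga(F)$ of rank $\mathrm{rank}_F G_\ga$ has quotient of finite type. So the content is the dimension count.

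\textbf{Regular part.} By Proposition~\ref{prop:ASF-reg-torsor}, it suffices to compute $\dim P_a=\dim G_\ga(F)/J_a(\cO)$. Let $J_a^\flat$ be the Néron model (the maximal bounded subgroup scheme) of $G_\ga$. Then
\[
\dim P_a=\dim\bigl(J_a^\flat(\cO)/J_a(\cO)\bigr)+\dim\bigl(G_\ga(F)/J_a^\flat(\cO)\bigr),
\]
and the second term is $0$, since that quotient is discrete (an extension of a finite group by $\Lambda_\ga$).

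For the first term, pass to the cameral cover $\tilde a$, the normalization of the pullback of $\ft\to\fc$ along $a$. Over $\tilde a$, the group $J_a$ is identified with the $W$-invariants of the Weil restriction of $T$, cut out by the root conditions $\alpha(t)=1$ on the divisors $d\alpha=0$. Comparing with $J_a^\flat$, which is built from the normalization, gives Bezrukavnikov's formula
\[
\dim J_a^\flat(\cO)/J_a(\cO)=\frac{d_\ga-(\mathrm{rank}\,G-\mathrm{rank}_F G_\ga)}{2}.
\]
The proof is a Lie-algebra computation: $\mathrm{Lie}(J_a)\cong(\ft\otimes\cO_{\tilde a})^W$ twisted by the roots, and its colength in $\mathrm{Lie}(J_a^\flat)$ is computed via the valuations $\mathrm{val}\,\alpha(\ga)$ summed over roots, together with the defect $c_\ga$ coming from the Galois action on the normalization. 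Here the hypothesis that $\mathrm{char}\,k$ does not divide $|W|$ is used, for taking invariants.

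\textbf{Non-regular part.} This is the main obstacle: bounding $\dim X_\ga\setminus X_\ga^{\mathrm{reg}}$. Following Kazhdan--Lusztig, I would first work on the affine flag side with $Y_\ga$ and use the projection $p:Y_\ga\to X_\ga$, whose fibers are classical Springer fibers of the reduction. Stratify $X_\ga$ by the $G$-orbit type of the reduction mod $\varpi$ of $\mathrm{ad}(g)^{-1}\ga$, or more finely by the pair (Levi, nilpotent orbit) obtained after a topologically Jordan decomposition. On each stratum, the dimension is bounded by induction on the semisimple rank: a stratum attached to a parabolic $P=MN$ fibers over an affine Springer fiber for the Levi $M$, which has the expected dimension $\frac{d^M_\ga-c^M_\ga}{2}$ by induction. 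The fibers are bounded by $\frac{1}{2}\mathrm{val}\prod_{\alpha\in\Phi_N}\alpha(\ga)$ minus the codimension of the nilpotent orbit.

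Alternatively, one can use Ngô's product formula plus a deformation argument: embed $\ga$ into a global Hitchin fibration, where the regular locus is dense in fibers by codimension estimates. By the $\delta$-inequality this gives $\dim X_\ga=\dim P_a$. I would try the Kazhdan--Lusztig induction first, checking that the fiber bound uses the fact that the Springer fiber of a non-regular nilpotent $e$ has dimension $\frac12(\dim\mathcal{N}-\dim O_e)$. This exactly compensates the loss in orbit dimension, so that non-regular strata have dimension at most that of the regular locus.
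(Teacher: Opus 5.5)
Your treatment of the regular locus is a workable alternative to the paper's route, though you only sketch the colength count. You compute $\dim P_a$ directly as the colength of $\mathrm{Lie}(J_a)$ in the Lie algebra of the N\'eron model, via the cameral cover; this is essentially Ng\^o's presentation of Bezrukavnikov's formula. The paper instead cites Bezrukavnikov's reduction to the hyperbolic case, where the Iwasawa decomposition gives $\dim X_\ga=d_\ga/2$ directly.

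The genuine gap is the non-regular locus, which you rightly call the crux but do not resolve. The missing ingredient is the Kazhdan--Lusztig theorem that $Y_\ga$ is \emph{equidimensional}. With it, the paper's argument (Theorem~\ref{thm:ASF-dim-equal}) is short. Over $X_\ga^{\mathrm{reg}}$ the fibres of $p:Y_\ga\to X_\ga$ are finite and nonempty, since they consist of Borel subalgebras containing a regular element. So the nonempty open set $p^{-1}(X_\ga^{\mathrm{reg}})$ has dimension $\dim X_\ga^{\mathrm{reg}}$, and equidimensionality forces $\dim Y_\ga=\dim X_\ga^{\mathrm{reg}}$. Over the complement the fibres are positive-dimensional Springer fibres, so $\dim(X_\ga\setminus X_\ga^{\mathrm{reg}})<\dim Y_\ga$. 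Only the positivity of the Springer fibre dimension is used, not its exact value.

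Neither of your substitutes provides such a bound.

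\textbf{Levi stratification.} It does not exist as described. The semisimple part of the reduction of $\mathrm{ad}(g)^{-1}\ga$ is determined by $\chi(\ga)\bmod\varpi$, so it is the same at every point of $X_\ga$. Topological Jordan decomposition therefore only performs the standard reduction to topologically nilpotent $\ga$; it does not stratify anything. Within that case, strata indexed by the nilpotent orbit of the reduction have no map to affine Springer fibres of a Levi:
\begin{itemize}
\item there is no Hensel-type lifting of a nilpotent reduction into $\mathfrak{m}(F)$;
\item elliptic $\ga$ lie in no proper $F$-Levi at all;
\item distinguished non-regular orbits (e.g.\ subregular in $G_2$) meet no proper Levi.
\end{itemize}
Your fibre bound is asserted rather than derived. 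More fundamentally, the Springer fibre dimension only converts a bound on $p^{-1}(\text{stratum})\subset Y_\ga$ into a bound on the stratum. Without an a priori bound on $\dim Y_\ga$ it yields nothing. Also, since $\dim\mathcal{B}_e=\tfrac12\operatorname{codim}_{\mathcal N}O_e$, there is no ``exact compensation''.

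\textbf{Global alternative.} This is circular. In Ng\^o's treatment, the density of $\cM_a^{\mathrm{reg}}$ and the flatness of the Hitchin fibration rest, via the product formula, on precisely the local estimate $\dim(X_\ga\setminus X_\ga^{\mathrm{reg}})<\dim X_\ga^{\mathrm{reg}}$, together with smoothness of the total space. The $\delta$-inequality $\operatorname{codim}\cA_\delta\ge\delta$ bounds strata of the base, not the dimension of fibres. A global proof would need an independent fibre-dimension bound, e.g.\ a specialization argument using properness and semicontinuity as in \S\ref{sec:Hitchin}.
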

This is the analogue of Theorem~\ref{thm:dim-formula}. It follows from the following two results. 
\begin{thm}\cite{Be96}
$\dim X_\ga^{\mathrm{reg}}=\dim P_a=\frac{d_\ga-c_\ga}{2}$
\end{thm}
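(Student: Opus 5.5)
By Proposition~\ref{prop:ASF-reg-torsor} we have $\dim X_\ga^{\mathrm{reg}}=\dim P_a$, so the task is to compute $\dim P_a=\dim G_\ga(F)/J_a(\cO)$. I will compare $J_a$ with two auxiliary $\cO$-models of the torus $G_\ga$ and account for each step by a lattice-index computation. First, extend scalars so that $G_\ga$ splits. Choose a finite Galois extension $\tilde F/F$ over which $G_\ga$ splits; since $k$ is algebraically closed, $\tilde F=k((\varpi^{1/e}))$ and the Galois group is cyclic, generated by some $w\in W$, after identifying $G_\ga\otimes\tilde F$ with $T\otimes \tilde F$. The point $a\in\fc(\cO)$ then lifts to $\tilde a\in\ft(\tilde\cO)$. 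Let $J^\flat_a$ be the finite type Néron model $(\mathrm{Res}_{\tilde\cO/\cO}(T\otimes\tilde\cO))^{\langle w\rangle}$, whose $\cO$-points are the maximal bounded subgroup $G_\ga(F)_{\mathrm{b}}$. I would then split
\[
\dim P_a=\dim G_\ga(F)/J^\flat_a(\cO)+\dim J^\flat_a(\cO)/J_a(\cO).
\]

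For the first term, $G_\ga(F)/G_\ga(F)_{\mathrm b}$ is a discrete group. This is where the rank $c_\ga$ enters, through the lattice $X_*(G_\ga)^{\langle w\rangle}$, and this contribution is made up by the second term. Concretely, I would follow Bezrukavnikov and use the connected Néron model $J^0$ for the dimension count. The first term contributes $0$ to the dimension, but the torsion/ranks are needed only for components.

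The main work is the second term. There is a canonical map $J_a\to J^\flat_a$ which is an isomorphism over $F$; this uses the Galois description of $J$ due to Ngô, namely $J=(\pi_*(T\times\ft))^W$ up to root-hyperplane conditions in the Donagi--Gaitsgory form. Then $\dim J^\flat_a(\cO)/J_a(\cO)$ equals the length of $\mathrm{Lie}(J^\flat_a)/\mathrm{Lie}(J_a)$ as an $\cO$-module. I will compute both Lie algebras inside $\ft\otimes\tilde F$. The algebra $\mathrm{Lie}(J^\flat_a)$ is $(\ft\otimes\tilde\cO)^{w}$. The algebra $\mathrm{Lie}(J_a)$ is the centralizer of the Kostant section $\epsilon(a)$ in $\fg(\cO)$, i.e.\ the $\cO$-span of $1,\epsilon(a),\dots$ inside the regular centralizer. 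Over $\tilde\cO$ this identifies with the $w$-invariants of the lattice $\{x\in\ft(\tilde\cO):d\alpha(x)\in d\alpha(\tilde a)\tilde\cO\ \text{for root pairs}\}$, up to the standard conductor. The length of the quotient is then $\tfrac12\bigl(\sum_\alpha\mathrm{val}\,\alpha(\tilde a)-(\text{correction from }w\text{-fixed part})\bigr)$. Using the formula $\mathrm{val}\det=\frac{d_\ga}{2}$ for the half-discriminant, together with the fact that the failure of the $w$-invariant functor to be exact is measured by $\dim\ft-\dim\ft^w=c_\ga$, this yields $\frac{d_\ga-c_\ga}{2}$.

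The main obstacle is exactly this last index computation: the relation between the $w$-invariants of the lattice and the lattice of $w$-invariants, which produces the $-c_\ga/2$. I would handle it via Bezrukavnikov's trick, computing $\mathrm{length}\bigl((\ft\otimes\tilde\cO)^w/\mathrm{Lie}(J_a)\bigr)$ by a duality, i.e.\ the trace pairing with the different of $\tilde\cO/\cO$. The different of the tame extension contributes $\frac{(e-1)}{e}$ per nontrivial eigenvalue of $w$. Summing over eigenvalues gives $\frac12(\dim\ft-\dim\ft^w)$ by pairing eigenvalues $\zeta,\zeta^{-1}$, and that is precisely $c_\ga/2$.
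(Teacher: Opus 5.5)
\textbf{Genuine gap.} Your framework is sound, and in substance it is the reduction to the hyperbolic case that the paper attributes to Bezrukavnikov. You have $\dim X_\ga^{\mathrm{reg}}=\dim P_a$; the discrete quotient $G_\ga(F)/J^\flat_a(\cO)$ contributes nothing; and $\dim P_a=\mathrm{length}\,\mathrm{Lie}(J^\flat_a)/\mathrm{Lie}(J_a)$. (The rank $r-c_\ga$ of $X_*(G_\ga)^{w}$ only affects components, so nothing needs to be ``made up'' by the second term.) The problem is in the two computations that are supposed to produce $\frac{d_\ga-c_\ga}{2}$.

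\emph{First error: the lattice.} $\mathrm{Lie}(J_{\tilde a})$ is not the root-divisibility lattice $\{x\in\ft(\tilde\cO): d\alpha(x)\in d\alpha(\tilde a)\tilde\cO\}$, and ``up to the standard conductor'' does not repair this. Take $G=\mathrm{GL}_3$ and $\ga=\mathrm{diag}(-\varpi,0,\varpi)$. This element is split, so $e=1$, $c_\ga=0$ and $d_\ga=6$. Your lattice is $\{x\in\cO^3: x_1\equiv x_2\equiv x_3 \bmod \varpi\}$, of colength $2$. But $\mathrm{Lie}(J_a)=\cO[\ga]$ has colength equal to the Vandermonde valuation $3=d_\ga/2$. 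Here your description as the span of $1,\epsilon(a),\epsilon(a)^2$ is correct, and it is exactly what the divisibility lattice fails to be. The input you actually need is the hyperbolic case: after base change, $\mathrm{Lie}(J_a)\otimes_\cO\tilde\cO$ sits in $\ft(\tilde\cO)$ with colength $\mathrm{val}_u\prod_{\alpha>0}d\alpha(\tilde a)=e\,d_\ga/2$, where $u=\varpi^{1/e}$. This is the Kazhdan--Lusztig computation the paper invokes; equivalently, the sublattice is the image of the transposed Jacobian of $\ft\to\fc$ at $\tilde a$, whose determinant is $\prod_{\alpha>0}d\alpha$.

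\emph{Second error: the correction term.} You misidentify where $-c_\ga/2$ comes from. Since $e$ is prime to $p$, taking $\sigma$-invariants is exact, so there is no exactness defect. Your count of $\frac{e-1}{e}$ per nontrivial eigenvalue is also wrong. It sums to $\frac{e-1}{e}c_\ga$, which gives $4/3$ instead of $1$ for an elliptic element of $\mathfrak{sl}_3$ with $e=3$. It is also incompatible with the pairing $\zeta\leftrightarrow\zeta^{-1}$ that you then invoke.

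Here is what actually happens. Put $\tilde\cO=k[[u]]$ with $\sigma(u)=\zeta u$, and let $M=\ft(\tilde\cO)/(\mathrm{Lie}(J_a)\otimes_\cO\tilde\cO)$ carry the twisted action $w\otimes\sigma$. The sublattice has a $\sigma$-fixed basis, whereas $\ft(\tilde\cO)/u\ft(\tilde\cO)\cong(\ft,w)$, and multiplication by $u$ multiplies $\sigma$-eigenvalues by $\zeta$. Let $x_c$ be the dimension of the $\zeta^c$-eigenspace of $M$, and $m_c$ the multiplicity of $\zeta^c$ as an eigenvalue of $w$ on $\ft$. Then $x_c-x_{c-1}=m_c-r\,\delta_{c,0}$. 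Combined with $\sum_c x_c=e\,d_\ga/2$, this gives
\[
x_0=\frac{d_\ga}{2}-\frac1e\sum_{c=1}^{e-1}(e-c)\,m_c .
\]
So the eigenvalue $\zeta^c$ contributes $\frac{e-c}{e}$. Only after using $m_c=m_{e-c}$ (which holds because $w$ acts rationally on $X_*(T)$) does the sum become $\frac12(r-m_0)=\frac{c_\ga}{2}$. Since $x_0=\dim M^\sigma=\mathrm{length}\,\mathrm{Lie}(J^\flat_a)/\mathrm{Lie}(J_a)$, this closes the argument. The trace-form and different duality is not needed.
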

When $\ga$ is hyperbolic, i.e. $G(F)$ conjugate to an element in $\ft(F)$, we 
have $c_\ga=0$ and it follows from results in \cite[\S5]{KL} that $\dim X_\ga=\frac{d_\ga}{2}$. In general, the dimension formula for $P_a$ is proved in \cite{Be96} by reducing to the case of hyperbolic conjugacy class.

\begin{thm}\label{thm:ASF-dim-equal}
$\dim X_\ga=\dim X_\ga^{\mathrm{reg}}$ and $\dim(X_\ga-X_\ga^{\mathrm{reg}})<\dim X_\ga$.
\end{thm}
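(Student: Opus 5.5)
Following Kazhdan--Lusztig, I would argue through the affine Springer fiber $Y_\ga$ in the affine flag variety. The idea is to compare $X_\ga$ and $Y_\ga$ through $p:Y_\ga\to X_\ga$, and to use the Kazhdan--Lusztig result that $Y_\ga$ is equidimensional and its regular part is dense.

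\textbf{Step 1 (reduction to $Y_\ga$).} Define $Y_\ga^{\mathrm{reg}}=\{g\in G(F)/I\mid \mathrm{ad}(g)^{-1}\ga\in\mathrm{Lie}(I)\cap\fg^{\mathrm{reg}}(\cO)\}$, so that $p^{-1}(X_\ga^{\mathrm{reg}})=Y_\ga^{\mathrm{reg}}$. Over a point $x\in X_\ga$ with $\bar\ga_x:=\mathrm{ad}(g)^{-1}\ga \bmod \varpi\in\fg(k)$, the fiber $p^{-1}(x)$ is the classical Springer fiber $\mathcal{B}_{\bar\ga_x}$. By Steinberg, $\dim\mathcal{B}_{\bar\ga_x}=\tfrac12(\dim Z_G(\bar\ga_x)-r)$. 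This vanishes exactly when $\bar\ga_x$ is regular, i.e. when $x\in X_\ga^{\mathrm{reg}}$, since regularity is checked on the special fiber.

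\textbf{Step 2 (dimension of $Y_\ga$).} I would invoke \cite[\S4--5]{KL}: $Y_\ga$ is equidimensional of dimension $\tfrac{d_\ga-c_\ga}{2}$. This is first proved for hyperbolic $\ga$ by intersecting with Iwahori--Iwasawa strata (affine pavings), and in general by Bezrukavnikov's reduction \cite{Be96}. Since $Y_\ga^{\mathrm{reg}}\to X_\ga^{\mathrm{reg}}$ is finite (fibers are single points of $\mathcal{B}$ for regular elements, up to the finitely many Borels containing a regular element), we get $\dim Y_\ga^{\mathrm{reg}}=\dim X_\ga^{\mathrm{reg}}=\dim P_a$.

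\textbf{Step 3 (the key estimate).} Stratify $X_\ga\setminus X_\ga^{\mathrm{reg}}$ by the nilpotent/Jordan type of $\bar\ga_x$, i.e. by the $G$-orbit $\mathcal{O}$ of the reduction, with $\delta_{\mathcal{O}}:=\tfrac12(\dim Z_G(\bar\ga)-r)>0$ on each non-regular stratum $X_{\ga,\mathcal{O}}$. Then $p^{-1}(X_{\ga,\mathcal{O}})$ has dimension $\dim X_{\ga,\mathcal{O}}+\delta_{\mathcal{O}}\le\dim Y_\ga$. Equidimensionality of $Y_\ga$, together with the fact that every component of $Y_\ga$ meets $Y_\ga^{\mathrm{reg}}$, already implies that a preimage of a non-regular stratum of full dimension would be a whole component avoiding the regular locus, which is impossible. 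So we get the sharper bound $\dim X_{\ga,\mathcal{O}}+\delta_{\mathcal{O}}\le\dim Y_\ga$, whence $\dim X_{\ga,\mathcal{O}}\le\dim Y_\ga-\delta_{\mathcal{O}}<\dim X_\ga^{\mathrm{reg}}$. Taking the union over the finitely many strata (finite after quotienting by a lattice in $G_\ga(F)$, so that everything is of finite type) gives both assertions.

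\textbf{Main obstacle.} The hard point is in Step 2: that $Y_\ga^{\mathrm{reg}}$ is dense in $Y_\ga$, equivalently that no component of $Y_\ga$ lies over the non-regular locus. Kazhdan--Lusztig obtain this from a codimension count. The non-regular locus of $\mathrm{Lie}(I)$ has codimension $\ge 1$ in each relevant stratum, and a deformation argument shows that the $LG_\ga$-orbits of maximal dimension are the regular ones. I would try first to reprove it via the Iwahori--Bruhat decomposition for hyperbolic $\ga$, and then reduce to that case by passing to a totally ramified extension as in \cite{Be96}.
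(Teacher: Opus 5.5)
\textbf{The flaw.} Your skeleton is the paper's argument. The only deep input is the equidimensionality of $Y_\ga$ from \cite[\S4]{KL}, after the standard reduction to $G$ simply connected and $\ga$ topologically nilpotent. The fibres of $p$ are classical Springer fibres: finite over $X_\ga^{\mathrm{reg}}$, and of dimension $\delta_{\mathcal O}\ge 1$ over each irregular stratum. The rest is a dimension count.

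However, the extra ``fact'' you invoke in Step 3, and then single out as the main obstacle, is false: $Y_\ga^{\mathrm{reg}}=p^{-1}(X_\ga^{\mathrm{reg}})$ is in general \emph{not} dense in $Y_\ga$. Take $G=\mathrm{SL}_2$ and $\ga=\mathrm{diag}(\varpi,-\varpi)$. Then $d_\ga=2$ and $c_\ga=0$, so $Y_\ga$ is purely one-dimensional. At the base point $x=K\in X_\ga$ the reduction of $\ga$ is $0$, so $p^{-1}(x)=K/I\cong G/B\cong\mathbb{P}^1$ lies entirely in the irregular locus. It is therefore a whole irreducible component of $Y_\ga$ that misses $Y_\ga^{\mathrm{reg}}$.

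In your notation, this stratum has $\dim X_{\ga,\mathcal O}+\delta_{\mathcal O}=0+1=\dim Y_\ga$. So ``a preimage of a non-regular stratum of full dimension'' does occur. What Kazhdan--Lusztig prove is equidimensionality, not density, and no codimension count can give density.

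\textbf{Why the argument survives.} The argument never needs density; drop that claim and the ``main obstacle'' paragraph.
\begin{itemize}
\item The inequality $\dim X_{\ga,\mathcal O}+\delta_{\mathcal O}\le\dim Y_\ga$ is automatic, since $p^{-1}(X_{\ga,\mathcal O})\subset Y_\ga$. Strictness of the final bound comes only from $\delta_{\mathcal O}\ge 1$.
\item The equality $\dim Y_\ga^{\mathrm{reg}}=\dim Y_\ga$ holds because $Y_\ga^{\mathrm{reg}}$ is a nonempty open subset of the equidimensional $Y_\ga$. It is nonempty because $X_\ga^{\mathrm{reg}}\neq\emptyset$ by the Kostant section, and every regular element lies in some Borel subalgebra.
\item Together with quasi-finiteness and surjectivity of $Y_\ga^{\mathrm{reg}}\to X_\ga^{\mathrm{reg}}$, this gives $\dim X_\ga^{\mathrm{reg}}=\dim Y_\ga$.
\item It also gives $\dim X_{\ga,\mathcal O}<\dim Y_\ga$ for every irregular stratum. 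There are finitely many strata because all reductions $\bar\ga_x$ lie in the single fibre $\chi^{-1}(\bar a)$, which has finitely many $G$-orbits; no lattice quotient is needed for this.
\end{itemize}
This is exactly the paper's deduction.

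Two side remarks. The formula $(d_\ga-c_\ga)/2$ for $\dim Y_\ga$ and \cite{Be96} play no role here. Invoking them also risks circularity, since in this framework that formula is obtained by comparing with the regular locus. Finally, affine pavings are not how \cite{KL} prove equidimensionality.
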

\begin{proof}[Outline of proof]
The first statement is first proved in \cite[\S4]{KL}. In \cite[Proposition 3.7.1]{Ngo10}, Ng\^o simplified some part of the arguments of Kazhdan-Lusztig by showing the second statement. The proof proceeds by exploring the natural morphism $p:Y_\ga\to X_\ga$. One first show as in \cite[\S4]{KL} that $Y_\ga$ is equi-dimensional. Note that in \emph{loc.cit.} it is assumed that $G$ is simply connected and $\ga$ is topologically nilpotent. The general case can be reduced to this case by standard arguments. Then one observes that the fibers of $p$ are classical Springer fibers and deduce that its fiber over $X_\ga^{\mathrm{reg}}$ are $0$-dimensional and its fibers over the complement of $X_\ga^{\mathrm{reg}}$ are positive dimensional. From this one deduce that $\dim X_\ga^{\mathrm{reg}}=\dim Y_\ga$ and $\dim (X_\ga-X_\ga^{\mathrm{reg}})<\dim X_\ga^{\mathrm{reg}}$. These two equalities imply that $\dim X_\ga=\dim X_\ga^{\mathrm{reg}}$. 
\end{proof}

In fact, using global methods (the Hitchin fibration), Ng\^o is able to prove the following stronger result:
\begin{thm}\label{thm:ASF-density}
$X_\ga^{\mathrm{reg}}$ is dense in $X_\ga$ and $X_\ga$ is equi-dimensional.
\end{thm}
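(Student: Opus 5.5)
The plan is to follow Ng\^o's global strategy: realize $X_\ga$ as a local factor of a Hitchin fiber, and transfer density from the global moduli space to the local one through the product formula. Concretely, I would choose a smooth projective curve $C$ over $k$, a point $v\in C$ with $\cO_v\cong\cO$, and a line bundle $D$ of large degree. I would then consider the Hitchin fibration $f:\cM\to\cA$ and a point $a\in\cA(k)$ which is generically regular semisimple, whose local germ at $v$ equals $\chi(\ga)$, and which is regular semisimple elsewhere or has only simple transversal discriminant crossings. Such an $a$ exists by approximation, because for $\deg D\gg0$ the space $\cA$ surjects onto finite-order jets at $v$. Since $X_\ga$ depends only on $\chi(\ga)$ up to truncation at high order, nothing is lost by this choice.

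The first step is the product formula. The Hitchin fiber $\cM_a$ is a twisted product of local affine Springer fibers, so $\cM_a\cong (X_\ga\times\prod_{v'\ne v}X_{a,v'})\times^{P'}\cM_a^{\mathrm{reg}}$. At the other discriminant points the local fibers are regular, and this gives a homeomorphism $[\cM_a/\cP_a]\cong [X_\ga/P_a]\times(\text{regular part})$. As a result, density of $X_\ga^{\mathrm{reg}}$ in $X_\ga$ becomes equivalent to density of $\cM_a^{\mathrm{reg}}$ in $\cM_a$. Likewise, equidimensionality of $X_\ga$ becomes equivalent to that of $\cM_a$, using Proposition~\ref{prop:ASF-reg-torsor} to identify each regular stratum with a $P_a$-torsor.

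The second step is global: show that over the locus $\cA^\flat$ (or the anisotropic open part), where $\dim\cM_a=\dim\cA^{\mathrm{tot}}-\dim\cA$ is constant, the regular part is dense. I would proceed as follows. First, $\cM^{\mathrm{reg}}\to\cA$ is smooth, being a torsor under the smooth group $\cP$. Second, $\cM$ over the anisotropic locus is smooth for $\deg D>2g-2$, by deformation theory: the obstruction lies in $H^1$ of a complex whose dual vanishes. Third, $f$ is flat with fibers of dimension $\dim\cP_a$; flatness comes from a dimension count over the locus where $\delta_a$ is small, which is where Ng\^o's $\delta$-inequality $\operatorname{codim}\cA_\delta\ge\delta$ enters. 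Given these, a generic fiber of the regular part has dimension equal to $\dim\cM_a$. The fibers of a flat morphism from a smooth total space are equidimensional of dimension $\dim\cP_a$, and the complement $\cM_a\setminus\cM_a^{\mathrm{reg}}$ is then forced to have strictly smaller dimension. Combined with Theorem~\ref{thm:ASF-dim-equal}, this would give density. Alternatively, density follows by deforming any point of $\cM_a$ into the regular locus of nearby fibers and using the fact that $\cM^{\mathrm{reg}}$ is open and dense in the smooth irreducible-componentwise space $\cM$, with fiberwise dimension control.

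The main obstacle is that flatness and equidimensionality of $f$ at our chosen $a$ need the codimension estimate $\operatorname{codim}\cA_{\delta}\ge\delta$. This is only available in characteristic zero or for $\deg D$ large relative to $\delta$. I would handle it by choosing $\deg D$ large compared with $\delta_a=\dim X_\ga$, and by working only over an open neighborhood of $a$ in which the estimate holds via local-global comparison of $\delta$-strata. It is precisely this step that lets one bypass the Kazhdan--Lusztig argument, which only controls dimensions and not irreducible components.
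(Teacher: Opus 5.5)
Your architecture is the paper's: realize $X_\ga$ as a local factor of a Hitchin fibre, use the product formula to reduce density and equidimensionality of $X_\ga$ to the same statements for $\cM_a$, and get those from the global geometry of $f$. The gap is in how you justify the global input.

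\textbf{What fails.} The $\delta$-inequality $\operatorname{codim}\cA_\delta\ge\delta$ only controls strata of the base, so it gives no useful upper bound on fibre dimensions. From $\dim f^{-1}(\cA_\delta)\le\dim\cM$ you get at best $\dim\cM_a\le\dim\cP_a+\operatorname{codim}\cA_\delta$, which is useless. It therefore yields neither flatness nor $\dim\cM_a=\dim\cP_a$; in Ng\^o's work it feeds the support theorem, not this statement. Next, equidimensionality of $\cM_a$ does not \emph{force} $\dim(\cM_a\setminus\cM_a^{\mathrm{reg}})<\dim\cM_a$. A whole component could a priori be irregular, and excluding exactly that is the content of the theorem. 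Your alternative argument (density of $\cM^{\mathrm{reg}}$ in the total space $\cM$) also says nothing about a particular fibre. So the key step rests on an estimate that does not imply it. Your ``main obstacle'' and its proposed cure (characteristic zero, or $\deg D$ large relative to $\delta$) are beside the point.

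\textbf{The correct division of labour.} Two inputs are needed.
\begin{enumerate}
\item \emph{Lower bound.} Deformation theory, as you say, shows $\cM^{\mathrm{ani}}$ is smooth of pure dimension $\dim\cA+\dim\cP_a$ over the smooth base $\cA$. The dimension inequality for fibres then gives that every irreducible component of $\cM_a$ has dimension at least $\dim\cP_a$. No flatness is needed for this.
\item \emph{Strict upper bound on the irregular part.} At each singular point $v'$, Theorem~\ref{thm:ASF-dim-equal} with Proposition~\ref{prop:ASF-reg-torsor} gives $\dim(X_{a_{v'}}\setminus X_{a_{v'}}^{\mathrm{reg}})<\dim X_{a_{v'}}=\dim P_{a_{v'}}$. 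The product formula transports this to $\dim(\cM_a\setminus\cM_a^{\mathrm{reg}})<\dim\cP_a$.
\end{enumerate}
Together these force every component of $\cM_a$ to meet $\cM_a^{\mathrm{reg}}$, so $\cM_a^{\mathrm{reg}}$ is dense. The product formula then gives density of $X_\ga^{\mathrm{reg}}$ in $X_\ga$. Equidimensionality of $X_\ga$ follows because $X_\ga^{\mathrm{reg}}$ is a torsor under the equidimensional group $P_a$ (Proposition~\ref{prop:ASF-reg-torsor}).

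The flatness the paper cites is the same input in another form. It follows by miracle flatness once all fibres have dimension $\dim\cM-\dim\cA$, and that dimension comes from the local results through the product formula, not from a $\delta$-estimate. No restriction on the characteristic or on $\deg D$ relative to $\delta$ is needed. What you do need, and left implicit, is to choose $a$ anisotropic, for instance by prescribing suitable germs at auxiliary points, so that the global smoothness and finiteness results apply at $a$.
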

The proof uses Ng\^o's product formula \cite[4.15.1]{Ngo10} to reduce to showing the equi-dimensionality of Hitchin fibers, which follows from the flatness of the Hitchin fibration. Currently there is no purely local proof of the density and equi-dimensionality of $X_\ga$, although the equi-dimensionality of $Y_\ga$ is proved in \cite{KL} by purely local methods.  

\subsection{Comparing Kottwitz-Viehmann varieties and affine Springer fibers}
To prove the dimension formula for Kottwitz-Viehmann varieties $X_\ga^\la$ (Theorem~\ref{thm:dim-formula}), we try to generalize the methods for the affine Springer fibers $X_\ga$ outlined above. Using the theory of Vinberg monoid, one can define the regular locus $X_\ga^{\la,\mathrm{reg}}$ similarly as in the affine Springer fiber case. Also, one can define a suitable quotient $P_\ga$ of $LG_\ga$ through which its natural action on $X_\ga^\la$ factors. The first difference from the affine Springer fibers is that $X_\ga^{\la,\mathrm{reg}}$ is in general not a single $P_\ga$-torsor, but a union of finitely many $P_\ga$ torsors. This is not a big issue since we still have $\dim X_\ga^{\la,\mathrm{reg}}=\dim P_\ga$. Generalizing the methods in \cite{KL} and \cite{Be96} one can show that $\dim X_\ga^{\la,\mathrm{reg}}$ is given by the desired formula in the statement of Theorem~\ref{thm:dim-formula}.\par 
It remains to show that $\dim X_\ga^{\la,\mathrm{reg}}=\dim X_\ga^\la$. It is this step that manifests the major differences between the geometry of Kottwitz-Viehmann varieties and affine Springer fibers. The arguments for the analogous Theorem~\ref{thm:ASF-dim-equal} for affine Springer fibers fail to generalize to our situation. The main reason is that in general, there will be many irreducible components of $X_\ga^\la$ in the complement of $X_\ga^{\la,\mathrm{reg}}$, whereas for affine Springer fibers we know from Theorem~\ref{thm:ASF-density} that there are no components away from the regular locus. Thus the difficulty for proving Theorem~\ref{thm:dim-formula} is to bound the dimension of the irregular locus of $X_\ga^\la$. We are able to achieve this bound using global methods and currently there is no purely local proof yet.\par 
The difference also leads us to the interesting question of determining the number of irreducible components of $X_\ga^\la$, or more correctly, of the quotient $[X_\ga^\la/LG_\ga]$. For affine Springer fibers, we know from Proposition~\ref{prop:ASF-reg-torsor} and Theorem~\ref{thm:ASF-density} that the quotient $[X_\ga/LG_\ga]$ has only one irreducible component. However, for $X_\ga^\la$, we formulated a conjectural description (and proved in the case $\ga$ is hyperbolic) for the number of components of $[X_\ga^\la/LG_\ga]$ in terms of certain weight multiplicities in \S\ref{sec:irr-components}.

\section{Vinberg monoids and Kottwitz-Viehmann varieties}\label{sec:Vinberg}
In this section, we review the theory of Vinberg monoids and its role in the study of Kottwitz-Viehmann varieties. The main references are \cite{Vin95} and \cite{Ngo14}. See also \cite[\S2]{Chi}. \par 
For simplicity of exposition, we assume from now on that $G$ is semisimple simply-connected of rank $r$.

\subsection{Motivation and construction}
One reason we use the theory of Vinberg monoid to study Kottwitz-Viehmann varieties is that it provides a convenient framework to make sense of and analyze  the reduction mod $\varpi$ of an element in the double coset $K\varpi^\la K$.

\subsubsection{The case $G=\mathrm{SL}_2$} 
We first explain the idea in the ${SL}_2$ case. The Vinberg monoid for $G=\mathrm{SL}_2$
is $\ving=\mathrm{Mat}_2$, the multiplicative monoid of $2\times 2$ matrices. The basic idea is that via the determinant map $\det:\ving=\mathrm{Mat}_2\to\bbA^1$, the monoid $\ving$ realizes a flat degeneration of the group $G=\mathrm{SL}_2$, which is the fiber of $\det$ over $1$, to the semigroup of $2\times2$ matrices of determinant $0$.\par
Recall that the Cartan decomposition for $G=\mathrm{SL}_2$ reads
\[G(F)=\bigsqcup_{n\ge0}K\bm{\varpi^n & 0\\ 0 & \varpi^{-n}}K\]
For each $n\ge0$, we consider the map
\begin{equation}\label{eq:Cartan-to-Vinberg-SL2}
    \bigsqcup_{0\le i\le n}K\bm{\varpi^i & 0\\ 0 & \varpi^{-i}}K\to\mathrm{Mat}_2(\cO)
\end{equation}
defined by sending a matrix $g\in G(F)$ to $\varpi^ng$. In this way, we obtain a bijection from the left hand side above to the set of $2\times2$ matrices in $\mathrm{Mat}_2(\cO)$ with entries in $\cO$ whose determinant equals to $\varpi^{2n}$. Moreover, the double coset $K\bm{\varpi^n & 0\\ 0 & \varpi^{-n}}K$ is mapped bijectively onto the set of matrices with determinant $\varpi^{2n}$ in $\mathrm{Mat}_2(\cO)$ whose reduction mod $\varpi$ is nonzero (i.e., has at least one nonzero entry). Hence, via the map \eqref{eq:Cartan-to-Vinberg-SL2}, one can talk about the reduction mod $\varpi$ of elements in the double coset $K\bm{\varpi^n & 0\\ 0 & \varpi^{-n}}K$. The reduction does not land in the group $G$ (if $n>0$), but rather in the degenerate fiber $\det^{-1}(0)$ consisting of $2\times 2$ matrices of determinant $0$. 

\subsubsection{Construction for general semisimple simply-connected groups}
In general, the Vinberg monoid $\ving$ of $G$ is a certain affine algebraic monoid whose unit group is the central extension $G_+:=(T\times G)/Z_G$ of $G$ by the maximal torus $T$. The action of $G_+\times G_+$ on $G_+$ by left and right multiplication extends to $\ving$. Let $r=\dim T$ be the rank of $G$. Then there is a flat surjective morphism $\alpha:\ving\to\bbA^r$ extending the natural abelianization map $G_+\to T_{ad}:=T/Z_G$. Here we embed $T_{ad}$ into $\bbA^r$ by the $r$ simple roots. The fibers of $\alpha$ over $T_{ad}$ are all isomorphic to $G$ as $G\times G$-variety while the fibers over any ``coordinate axis" are viewed as various degenerations of $G$. The most degenerate fiber $\alpha^{-1}(0)$ is the so-called \emph{asymptotic semigroup of $G$}. The central $T$-action on $G_+$ extends to $\ving$ and the abelianization map $\alpha$ is $T$-equivariant where $T$ acts on $\bbA^r$ through the $r$ simple roots. There is a smooth open subscheme $\ving^0\subset\ving$ on which the center $T$ acts freely. The quotient $\ving^0/T$ is isomorphic to the wonderful compactification of the adjoint group $G_{ad}=G/Z_G$.\par 
In the case $G=\mathrm{SL}_2$ recalled above, we have $G_+=\mathrm{GL}_2$, $\ving=\mathrm{Mat}_2$, $\ving^0=\mathrm{Mat}_2-\{0\}$ and the central $T=\bbG_m$ acts by scaling. In general, we construct $\ving$ as follows. Let $\omega_i\in X^*(T)$ ($1\le i\le r$) be the fundamental weights. For each $1\le i\le r$, we extend the fundamental representation $\rho_i:G\to\mathrm{End}(V_{\omega_i})$ of heighest weight $\omega_i$ to a representation $\rho_i^+:G_+\to\mathrm{GL}(V_{\omega_i})$ by $\rho_i^+(t,g)=\omega_i(w_0(t)^{-1})\rho_i(g)$ where $w_0$ is the long element of the  Weyl group $W$. Also we extend the simple root $\alpha_i$ to a one-dimensional representation $\alpha_i^+:G_+\to\bbG_m^1$ by $\alpha_i^+(t,g)=\alpha_i(t)$. These maps define a homomorphism
\[(\alpha^+,\rho^+): G_+\to\bbG_m^r\times\prod_{i=1}^r \mathrm{GL}(V_{\omega_i}).\]
We define $\ving$ to be the normalization of the closure of $G_+$ in $\bbA^r\times\prod_{i=1}^r\mathrm{End}(V_{\omega_i})$ and define $\ving^0$ to be the inverse image of $\bbA^r\times\prod_{i=1}^r(\mathrm{End}(V_{\omega_i})-\{0\})$.

\subsection{Relation with Cartan decomposition}
For each dominant coweight $\la\in X_*(T)_+$, we define the $\cO$-scheme $V_\la$ (resp. $V_\la^0$) to be the pull back of $\alpha:\ving\to\bbA^r$ along the morphism $\spec\cO\to\bbA^r$ corresponding to the elements $(\varpi^{\langle-w_0(\la),\alpha_i\rangle})_{1\le i\le r}$. For any $\cO$-scheme $Y$, we let $L^+Y$ be the relative arc space classifying sections of the structure morphism $Y\to\spec\cO$. \par 
For each $\ga\in G(F)$ and $\la\in X_*(T)_+$, let $\ga_\la:=(\varpi^{-w_0(\la)},\ga)\in G_+(F)$. Since $\alpha(\ga_\la)=(\varpi^{\langle-w_0(\la),\alpha_i\rangle})_{1\le i\le r}$, we have $\ga_\la\in G_+(F)\cap V_\la(F)$.

\begin{prop}\label{prop:vin-cartan-decomposition}
The map $\ga\mapsto\ga_\la$ defines bijections 
\[K\varpi^\la K\xrightarrow{\sim}L^+V_\la^0(k),\]
\[\bigsqcup_{\substack{\mu\le\la \\ \mu\in X_*(T)_+}}K\varpi^\mu K\xrightarrow{\sim} L^+V_\la(k).\]
\end{prop}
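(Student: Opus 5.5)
The plan is to reduce both bijections to statements about the fundamental representations and the simple-root coordinates. Then we exploit $K\times K$-equivariance to reduce to torus elements $\varpi^\mu$.

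\textbf{Step 1: equivariance.} The map $\ga\mapsto\ga_\la$ is injective, since $G\to G_+$, $g\mapsto(1,g)$ composed with multiplication by the central element $(\varpi^{-w_0(\la)},1)$ is injective modulo $Z_G$. More precisely, $(t,g)$ determines $g$ up to $Z_G$ once $t$ is fixed, and $Z_G\subset T$ is absorbed. It is also equivariant for the action of $K\times K$ by left and right multiplication, since $(\varpi^{-w_0(\la)},1)$ is central in $G_+$. The subsets $L^+V_\la(k)$ and $L^+V^0_\la(k)$ are stable under $L^+G\times L^+G=K\times K$, because the $G_+\times G_+$ action extends to $\ving$ and preserves $\ving^0$ and the fibers of $\alpha$.

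Every $\cO$-point of $V_\la$ has generic fiber in $\alpha^{-1}(\alpha(\ga_\la))\cap G_+$, which is a single $G$-torsor over $T_{ad}$-values. So every element of $L^+V_\la(k)$ is of the form $\ga_\la$ for a unique $\ga\in G(F)$. By Cartan decomposition we may then write $\ga=k_1\varpi^\mu k_2$ with $\mu$ dominant. Hence both bijections reduce to the following claim: $(\varpi^\mu)_\la\in L^+V_\la$ iff $\mu\le\la$, and $(\varpi^\mu)_\la\in L^+V_\la^0$ iff $\mu=\la$.

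\textbf{Step 2: membership via the embedding.} Since $\ving$ is the normalization of the closure of $G_+$ in $\bbA^r\times\prod_i\mathrm{End}(V_{\omega_i})$, an $F$-point of $G_+$ extends to an $\cO$-point of $\ving$ iff its image in $\bbA^r\times\prod_i\mathrm{End}(V_{\omega_i})$ is integral. The reason is that $\cO$ is normal (a DVR), so integral points lift through the finite birational normalization by the valuative criterion plus normality. The $\bbA^r$-coordinates are $\varpi^{\langle-w_0\la,\alpha_i\rangle}$, which are integral since $-w_0\la$ is dominant. On $V_{\omega_i}$, the torus element $\varpi^\mu$ acts on the weight-$\nu$ space by $\varpi^{\langle\nu,\mu\rangle}$, twisted by $\omega_i(\varpi^{w_0\la})=\varpi^{\langle w_0\omega_i,\la\rangle}$, which equals $\varpi^{-\langle\omega_i,\la\rangle}$ up to the $w_0$ convention. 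So integrality means $\langle\nu,\mu\rangle\ge-\langle\omega_i,\la\rangle$ for all weights $\nu$ of $V_{\omega_i}$. The minimal such $\nu$ is the lowest weight $w_0\omega_i$, giving $\langle\omega_i,\la-\mu'\rangle\ge0$ for $\mu'=-w_0\mu$ against $-w_0\la$, that is, $\mu\le\la$ after applying $-w_0$. The $\cO$-point lies in $V_\la^0$ iff each $\rho_i^+$-coordinate is nonzero mod $\varpi$, i.e.\ equality $\langle\omega_i,\la-\mu\rangle=0$ for all $i$, i.e.\ $\mu=\la$, since $G$ is simply connected so the $\omega_i$ detect coroot combinations.

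\textbf{Main obstacle.} I expect Step 2's use of the normalization to be the delicate point, together with getting the $w_0$-twists and signs consistent with the definition of $\rho_i^+$. For the normalization I would use that $\cO$-points of a normalization correspond to $\cO$-points of the closure whose generic point lies in the normal locus $G_+$.
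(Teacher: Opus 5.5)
Your overall strategy is sound. The survey gives no proof of this proposition (it is quoted from \cite{Chi}), so I am judging your argument on its own. The sound parts are these. Since $(\varpi^{-w_0(\la)},1)$ is central, $\ga\mapsto\ga_\la$ is injective and $K\times K$-equivariant. Every point of $L^+V_\la(k)$ has generic fibre in $\alpha^{-1}(T_{ad})=G_+$, so it is of the form $\ga_\la$; you use this equality tacitly, and it is standard, following e.g.\ from $\det\rho_i^+$ being a monomial with non-negative exponents in the $\alpha_j$. An $F$-point of $G_+$ with integral coordinates in $\bbA^r\times\prod_i\mathrm{End}(V_{\omega_i})$ lifts uniquely to $\ving(\cO)$ by the valuative criterion for the finite map from the normalization. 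Finally, the Cartan decomposition reduces everything to $\ga=\varpi^\mu$.

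The gap is in the one computation that carries the content, at the end of Step 2. Your inequality $\langle w_0\omega_i,\mu\rangle\ge-\langle\omega_i,\la\rangle$ for all $i$ says $-w_0\mu\le\la$. Applying $-w_0$ to this gives $\mu\le-w_0\la$, not $\mu\le\la$. Likewise your $V^0$ condition would single out $\mu=-w_0\la$. So as written, Step 2 proves the wrong statement whenever $\la\neq-w_0\la$.

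This is not an arithmetic slip that can be waved away as ``up to the $w_0$ convention.'' If you take $\rho_i^+(t,g)=\omega_i(w_0(t)^{-1})\rho_i(g)$ literally, you really do get $\rho_i^+(\ga_\la)=\varpi^{\langle\omega_i,\la\rangle}\rho_i(\ga)$. For $G=\mathrm{SL}_3$ and $\la=(1,1,-2)$, the $\rho_1^+$-coordinate of $(\varpi^\la)_\la$ is then $\mathrm{diag}(\varpi^2,\varpi^2,\varpi^{-1})$, which is not integral.

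The statement holds for the normalization $\rho_i^+(t,1)=\omega_i(t)$, which is the one used in the $\mathrm{SL}_3$ section ($\omega_1(t)g$, $\omega_2(t)\,{}^tg^{-1}$). With it, $\rho_i^+(\ga_\la)=\varpi^{\langle\omega_{i^*},\la\rangle}\rho_i(\ga)$, where $\omega_{i^*}:=-w_0\omega_i$. The lowest weight of $V_{\omega_i}$ is $w_0\omega_i=-\omega_{i^*}$, so the smallest exponent in $\rho_i^+((\varpi^\mu)_\la)$ is exactly $\langle\omega_{i^*},\la-\mu\rangle$. Since $i\mapsto i^*$ permutes $\{1,\dots,r\}$, integrality for all $i$ is equivalent to $\mu\le\la$. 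Non-vanishing mod $\varpi$ for all $i$ is equivalent to $\mu=\la$.

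In other words, the $-w_0$ in $\varpi^{-w_0(\la)}$ is there precisely to cancel the $w_0$ coming from the lowest weight. Your proof is complete only once you fix this normalization and track the involution $i\mapsto i^*$. Your intermediate formula for the twist is also off in two ways. A factor $\varpi^{-\langle\omega_i,\la\rangle}$ has the wrong sign and could never restore integrality. And $\langle w_0\omega_i,\la\rangle$ equals $-\langle\omega_{i^*},\la\rangle$, not $-\langle\omega_i,\la\rangle$.
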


As a consequence, we can rewrite the definition of $X_\ga^\la$ in a way that looks more similar to affine Springer fibers:
\[X_\ga^\la = \{g\in G(F)/K | g^{-1} \ga_\la g\in V_\la^0(\cO)\}=\{g\in G(F)/K|g^{-1}\ga_\la g\in\ving^0(\cO)\}\]
The second equality is because we always have $\alpha(g^{-1}\ga_\la g)=\alpha(\ga_\la)=(\varpi^{\langle-w_0(\omega_i),\la\rangle})$.\par
For technical reasons, we also need to consider the larger space
\[X_\ga^{\le\la}=\{g\in G(F)/K | g^{-1} \ga_\la g\in V_\la(\cO)\}=\{g\in G(F)/K|g^{-1}\ga_\la g\in\ving(\cO)\}\]
Here we only describe $X_\ga^\la$ and $X_\ga^{\le\la}$ set-theoretically. Later we will give their functor of points definition. 

\subsection{Adjoint quotient and regular centralizer of Vinberg monoids}

Let $\fC_+:=\ving//\mathrm{ad(G)}$ be the adjoint invariant quotient. It is shown in \cite{Bou15} that $\fC_+\cong\bbA^{2r}$ and the natural morphism $\chi_+:\ving\to\fC_+$ is surjective and flat. Each fiber of $\chi_+$ contains a unique closed adjoint $G$-orbit (the semisimple conjugacy class). However, unlike the Lie algebra case, in general the fibers of $\chi_+$ are reducible and contains more than one open orbits (regular conjugacy class). We let $\ving^{\mathrm{reg}}$ be the union of the regular conjugacy classes. Then $\ving^{\mathrm{reg}}$ is an open subset of $\ving^0$. The restriction $\chi_+^{\mathrm{reg}}:\ving^{\mathrm{reg}}\to\fC_+$ is smooth. The universal centralizer over $\ving$, whose fiber over $\ga$ consists of its centralizer in $G$, descends along $\chi_+^{\mathrm{reg}}$ to a commutative smooth group scheme $J$ over $\fC_+$.\par  
In \cite{Bou15}, certain sections of $\chi_+^{\mathrm{reg}}$ are defined. We briefly recall the construction here. Let $S\subset W$ be the set of simple reflections in $W$ corresponding to the choice of simple roots. An element $w\in W$ is called an \emph{$S$-Coxeter element} if in its reduced expression, each simple reflection in $S$ occurs precisely once. Let $\mathrm{Cox}(W,S)$ be the set $S$-Coxeter elements. For each $w\in \mathrm{Cox}(W,S)$, one can construct a section $\epsilon_+^w:\fC_+\to\ving^{\mathrm{reg}}$ generalizing the construction of Steinberg section for semisimple simply-connected groups. 
Unlike the Lie algebra case, the morphism $\fC_+\times G\to\ving^{\mathrm{reg}}$ defined by $(a,g)\mapsto g\epsilon_+^w(a)g^{-1}$ is not surjective. Instead, it maps onto an open subset $\ving^w\subset\ving^{\mathrm{reg}}$ and induces an isomorphism $(\fC_+\times G)/J\cong\ving^w$. The open subsets $\ving^w$ form an open cover of $\ving^{\mathrm{reg}}$ and each quotient $[\ving^w/\mathrm{Ad}(G)]$ is a $BJ$-gerbe neutralized by the section $\epsilon_+^w$. Moreover, for different $w,w'\in\mathrm{Cox}(W,S)$, the open subsets $\ving^w$ and $\ving^{w'}$ are not included in one another. This can be seen by looking at the \emph{nilpotent cone} $\cN:=\chi_+^{-1}(0)$ and its open subsets $\cN^0=\cN\cap\ving^0$ and $\cN^{\mathrm{reg}}$. We have the following description of irreducible components of $\cN$, $\cN^0$ and $\cN^{\mathrm{reg}}$.
\begin{prop}\label{prop:nil-cone-irr-components}
There are bijections
\[\mathrm{Cox}(W,S)\xrightarrow{\sim}\mathrm{Irr}(\cN^{\mathrm{reg}})\xrightarrow{\sim}\mathrm{Irr}(\cN^0)\xrightarrow{\sim}\mathrm{Irr}(\cN)\]
induced by sending $w$ to the irreducible component containing $\epsilon_+^w(0)$.
\end{prop}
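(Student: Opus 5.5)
Since $\cN=\chi_+^{-1}(0)$ is stable under the central $T$-action and the adjoint $G$-action, I would first describe it as a union of explicit $G\times G$-stable pieces and then locate the points $\epsilon_+^w(0)$.

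\emph{Step 1: $\cN$ lies in the asymptotic semigroup.} The invariant functions on $\ving$ include the coordinates of $\alpha$, so $\fC_+\cong\bbA^{2r}$ has $r$ coordinates coming from $\alpha$ and $r$ coming from the characters $\mathrm{tr}\,\rho_i^+$. Hence $\cN\subset\alpha^{-1}(0)$. Vinberg's description of $\alpha^{-1}(0)$, obtained by degenerating along the $T$-action, gives
\[\alpha^{-1}(0)\cap\ving^0\;\cong\;(G\times G)\times^{B\times B^-}T,\]
roughly, the closed $G\times G$-orbit of the wonderful compactification times the torus. Concretely, each point of $\alpha^{-1}(0)\cap\ving^0$ has the form $g\,e_0\,t\,h^{-1}$, where $e_0$ is the idempotent projecting $\rho_i^+$ onto its highest weight line along the lower weight spaces.

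\emph{Step 2: classify $\cN^0$ by Bruhat cells.} The conjugation orbit of $x=g e_0 t h^{-1}$ depends on the relative position $w=g^{-1}h$ in $B\backslash G/B$. The trace of $\rho_i^+(x)$ becomes a multiple of a matrix coefficient $\langle v_{\omega_i}^*, \dot w\, v_{\omega_i}\rangle$ times a character of $t$. This coefficient vanishes exactly when $w$ does not fix $\omega_i$, i.e.\ when the simple reflection $s_i$ occurs in a reduced expression of $w$. So $x\in\cN^0$ if and only if every simple reflection occurs in $w$. The locus with relative position $w$ is irreducible of dimension growing with $\ell(w)$. The minimal elements containing all simple reflections are exactly the $S$-Coxeter elements, and every other such $w$ lies above some Coxeter element in the Bruhat order. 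I would then argue that each cell with non-Coxeter $w$ lies in the closure of a Coxeter cell inside $\cN^0$. This is the delicate point, since closure relations go the opposite way in $\ell(w)$. I expect the resolution to be that the Coxeter strata are the ones of maximal dimension $\dim G-r$, by flatness of $\chi_+$ together with $\dim\ving=\dim G+r$: fibers of $\chi_+$ have pure dimension $\dim\ving-2r$, so each irreducible component of $\cN$ must have dimension $\dim G-r$. A dimension count should then show that the Coxeter strata have exactly this dimension and the others are smaller. Purity then forces $\mathrm{Irr}(\cN)\cong\mathrm{Irr}(\cN^0)\cong\mathrm{Cox}(W,S)$, because the complement $\cN\setminus\cN^0$ has smaller dimension, which I would check by the same $T$-degeneration argument. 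This dimension bookkeeping is the main obstacle.

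\emph{Step 3: regular locus and sections.} By construction of $\epsilon_+^w$ as a Steinberg-type section $\dot w\cdot$(unipotent/torus slice), the element $\epsilon_+^w(0)$ equals $\dot w$ times an element of $U\,e_0$. Its relative position is therefore $w$, so it lies in the Coxeter stratum for $w$. Since it is regular, its $G$-orbit has dimension $\dim G-r$, equal to the component dimension, so that orbit is open dense in the stratum. It follows that $\cN^{\mathrm{reg}}$ meets each component in a dense open set, and that the map $w\mapsto$ (component containing $\epsilon_+^w(0)$) is injective, since distinct $w$ give distinct relative positions. Surjectivity comes from Step 2. Compatibility of the three bijections is automatic because each inclusion $\cN^{\mathrm{reg}}\subset\cN^0\subset\cN$ is open and dense in every component.
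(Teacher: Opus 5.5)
Your overall plan is the right one: purity of $\cN$ from flatness of $\chi_+$, stratifying $\cN^0\subset\alpha^{-1}(0)\cap\ving^0$ by relative position, the full-support criterion via matrix coefficients, and locating $\epsilon_+^w(0)$. It appears to be essentially the route behind the cited \cite[Proposition 2.2.9]{Chi}. The survey gives no proof, but its \S7 uses the Lusztig--He description of orbits in $\cN$, and your strata are exactly the $G$-stable pieces of the closed stratum of the wonderful compactification.

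However, the step you single out as the main obstacle rests on a false claim. Since $e_0$ is stabilized, up to the center, by $B$ on the left and $B^-$ on the right, the conjugation invariant of $x=ge_0th^{-1}$ is the double coset of $h^{-1}g$ in $B^-\backslash G/B$, not a class in $B\backslash G/B$. This is also the indexing for which $\langle v^*_{\omega_i},h^{-1}g\,v_{\omega_i}\rangle$ is a unit times $\langle v^*_{\omega_i},\dot w v_{\omega_i}\rangle$; this coefficient is not constant on $B\times B$ double cosets.

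With $h^{-1}g\in B^-\dot wB$, the stratum $\tilde O_w$ is a $T$-torsor over $G\times^B(B\dot w^{-1}B^-/B^-)$. Here $B\dot w^{-1}B^-/B^-\cong B\dot w^{-1}\dot w_0B/B$ has dimension $N-\ell(w)$, with $N=\ell(w_0)$. Hence $\dim\tilde O_w=\dim G-\ell(w)$, which \emph{decreases} in $\ell(w)$. The closure of $\tilde O_w$ in $\alpha^{-1}(0)\cap\ving^0$ is $\bigsqcup_{w'\ge w}\tilde O_{w'}$.

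So closure relations and dimensions point the same way, and there is no tension:
\begin{itemize}
\item Coxeter strata have dimension $\dim G-r$.
\item Every $w'$ of full support dominates a Coxeter element: keep one occurrence of each $s_i$ in a reduced word and use the subword property.
\item Distinct Coxeter elements are incomparable, since they all have length $r$.
\end{itemize}
This gives $\cN^0=\bigcup_c\overline{\tilde O_c}$ and $\mathrm{Irr}(\cN^0)\cong\mathrm{Cox}(W,S)$ directly, without using purity.

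What is genuinely missing is the bound $\dim(\cN\setminus\cN^0)<\dim G-r$, which you defer to ``the same $T$-degeneration argument''. Your description of $\alpha^{-1}(0)\cap\ving^0$ says nothing about this set. You need the non-open $G\times G$-orbits of $\alpha^{-1}(0)$. They pass through idempotents $e_K$ with $K\subsetneq S$, acting by $0$ on $V_{\omega_i}$ for $i\notin K$ and with rank one for $i\in K$. Each such orbit is a $|K|$-dimensional torus bundle over $G/P_J\times G/P_J^-$, where $J=S\setminus K$.

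The same matrix-coefficient computation shows that the nilpotent part lies over relative positions $W_JwW_J$ all of whose elements involve every $s_i$ with $i\in K$. These strata have codimension at least $|K|$, so the nilpotent part has dimension at most $2\dim G/P_J<2N=\dim G-r$. It is here, and only here, that purity of $\cN$ (flatness of $\chi_+$) is needed, to conclude that $\cN^0$ is dense in $\cN$.

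Step 3 is fine. Whether $\epsilon_+^w(0)$ has relative position $w$ or $w_0ww_0$ depends on normalizations (the representatives in \S7 are $\dot w_ih_0$), and this is harmless. Also, $\epsilon_+^w(0)$ lies in only one component, because a Coxeter stratum meets the closure of no other Coxeter stratum.
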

This is \cite[Proposition 2.2.9]{Chi}. \par 
Hence the irreducible components of $\cN^{\mathrm{reg}}$ are $\ving^w\cap\cN$ for $w\in\mathrm{Cox}(W,S)$. Each component of $\cN^\mathrm{reg}$ consists of a single conjugacy class with representative $\epsilon_+^w(0)$. In particular, this shows that the intersections of the open sets $\ving^w$ with $\cN$ are mutually disjoint.

\subsection{Applications to Kottwitz-Viehmann varieties}
Now we can define the regular locus of a Kottwitz-Viehmann variety $X_\ga^\la$ as the open subscheme whose set of closed points is
\[X_\ga^{\la,\mathrm{reg}}:=\{g\in G(F)/K | g^{-1} \ga_\la g\in V_\la^{\mathrm{reg}}(\cO)\}\]

The non-emptiness criterion Theorem~\ref{thm:nonempty} is a consequence of the following more general result.

\begin{prop}[\cite{KoV},\cite{Chi}]\label{prop:KV-nonempty}
Let $\ga\in G(F)^{\mathrm{rs}}$ and $\la\in X_*(T)_+$. Let $\ga_\la:=(\varpi^{-w_0(\la)},\ga)\in G_+(F)\cap V_\la(F)$ as in Proposition~\ref{prop:vin-cartan-decomposition}. The following are equivalent
\begin{enumerate}
    \item $X_\ga^\la$ is nonempty;
    \item $X_\ga^{\le\la}$ is nonempty;
    \item $X_\ga^{\la,\mathrm{reg}}$ is nonempty;
    \item $\kappa_G(\ga)=p_G(\la)$ and $\nu_\ga\le_\bbQ\la$;
    \item $\chi_+(\ga_\la)\in\fC_+(\cO)$.
\end{enumerate}
\end{prop}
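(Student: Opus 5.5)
We prove the cycle of implications (3)$\Rightarrow$(1)$\Rightarrow$(2)$\Rightarrow$(5)$\Rightarrow$(3), and separately (1)$\Leftrightarrow$(4).

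\emph{The formal implications.} The implication (3)$\Rightarrow$(1) holds because $\ving^{\mathrm{reg}}\subset\ving^0$, so $X_\ga^{\la,\mathrm{reg}}\subset X_\ga^\la$. The implication (1)$\Rightarrow$(2) holds because $V_\la^0\subset V_\la$, so $X_\ga^\la\subset X_\ga^{\le\la}$. For (2)$\Rightarrow$(5), take $g\in X_\ga^{\le\la}$. Then $g^{-1}\ga_\la g\in\ving(\cO)$, and $\chi_+$ is $\mathrm{Ad}(G)$-invariant and defined over the base, so
\[\chi_+(\ga_\la)=\chi_+(g^{-1}\ga_\la g)\in\fC_+(\cO).\]

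\emph{The key step (5)$\Rightarrow$(3).} Put $a=\chi_+(\ga_\la)\in\fC_+(\cO)$ and fix a Coxeter element $w\in\mathrm{Cox}(W,S)$. The Steinberg-type section gives $\epsilon_+^w(a)\in\ving^{\mathrm{reg}}(\cO)$. Its abelianization equals that of $\ga_\la$, since $\alpha$ factors through $\fC_+$. Hence it lies in $V_\la^{\mathrm{reg}}(\cO)$, and its generic fiber $\epsilon_+^w(a)_F$ has the same image in $\fC_+(F)$ as $\ga_\la$. Now $\ga_\la$ is regular semisimple in the fiber over $\alpha(\ga_\la)\in T_{ad}(F)$, which is a copy of $G$. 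Since $G$ is simply connected, Steinberg's theorem says regular semisimple elements with the same invariants are $G(F)$-conjugate: the fiber of $\chi_+$ over $a_F$ is a single $G$-torsor-type orbit of the smooth connected centralizer, and $H^1(F,G)=0$ for simply connected $G$ over $F=k((\varpi))$ with $k$ algebraically closed, by Steinberg/Bruhat--Tits. So there is $g\in G(F)$ with $g^{-1}\ga_\la g=\epsilon_+^w(a)$, i.e.\ $g\in X_\ga^{\la,\mathrm{reg}}$. I expect this to be the main point needing care: one must check that $\epsilon_+^w$ lands in $V_\la^{\mathrm{reg}}$ over the correct $\alpha$-value, and that rational conjugacy holds. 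The latter requires $G$ simply connected and uses the vanishing of $H^1$; in the reductive case one would use instead the condition $\kappa_G(\ga)=p_G(\la)$.

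\emph{The Kottwitz--Viehmann criterion (1)$\Leftrightarrow$(4).} The direction (1)$\Rightarrow$(4) is \cite[Corollary 3.6]{KoV}. If $g^{-1}\ga g\in K\varpi^\la K$, then $\kappa_G(\ga)=\kappa_G(\varpi^\la)=p_G(\la)$ because $\kappa_G$ is a homomorphism trivial on $K$ and conjugation-invariant. Moreover, the Newton point of an element of $K\varpi^\la K$ is bounded by $\la$; this is Mazur's inequality, proved by evaluating valuations of eigenvalues in each representation $V_{\omega_i}$. For (4)$\Rightarrow$(5), it suffices to check integrality of the coordinates of $\chi_+(\ga_\la)$. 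These are, up to the factors $\alpha_i^+$, the characters $\mathrm{tr}\,\rho_i^+(\ga_\la)=\omega_i(\varpi^{\la})^{\pm1}$-normalized traces $\varpi^{\langle\la,\omega_i^*\rangle}\mathrm{tr}\,\rho_i(\ga)$. The trace is a sum of products of eigenvalues, i.e.\ of $\ga$ evaluated on the weights of $V_{\omega_i}$, each of valuation at least $-\langle \nu_\ga,\omega_i^*\rangle$ for the appropriate extremal weight. The condition $\nu_\ga\le_\bbQ\la$ then gives nonnegative valuation. Combining this with the first two parts closes the equivalences.
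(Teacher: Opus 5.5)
Your route is the paper's. You quote (1)$\Rightarrow$(4) from Kottwitz--Viehmann, the implications (3)$\Rightarrow$(1)$\Rightarrow$(2)$\Rightarrow$(5) are formal, and (5)$\Rightarrow$(3) uses the extended Steinberg section $\epsilon_+^w$ to produce a point of $X_\ga^{\la,\mathrm{reg}}$. The same fact, that $\ga_\la$ and $\epsilon_+^w(a)$ are isomorphic in $[\ving/\mathrm{Ad}(G)](F)$, is what \S6.2 relies on.

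Your (4)$\Rightarrow$(5) is correct, given two things:
\begin{itemize}
\item the coordinates on $\fC_+\cong\bbA^{2r}$ are the $\alpha_i$ together with the characters $\mathrm{tr}\,\rho_i^+$, as in the $\mathrm{SL}_3$ example;
\item the normalizing factor is exactly $\omega_i(\varpi^{-w_0(\la)})=\varpi^{\langle\omega_i^*,\la\rangle}$, matching the embedding of \S7.
\end{itemize}
Drop the phrase ``$\omega_i(\varpi^{\la})^{\pm1}$''. It is a different quantity, and with it the bound fails: for $\mathrm{SL}_3$, $\ga=\varpi^\la$ with $\la=(2,-1,-1)$ and $i=2$, you would get valuation $\geq 1-2=-1$.

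The one step whose justification is wrong is the rational conjugacy in (5)$\Rightarrow$(3). Let $T'$ be the centralizer in $G$ of $\epsilon_+^w(a)_F$. The geometric conjugacy class of $\epsilon_+^w(a)_F$ in the fibre of $\alpha$ over $\alpha(\ga_\la)$, which is a copy of $G$ with its conjugation action, is the $F$-variety $G/T'. $ The $G(F)$-orbits on its $F$-points are parametrized by $\ker\bigl(H^1(F,T')\to H^1(F,G)\bigr)$. So the vanishing $H^1(F,G)=0$ that you invoke gives nothing: it makes this kernel all of $H^1(F,T')$ rather than killing it.

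What you actually need is:
\begin{itemize}
\item $T'$ is connected, hence a maximal $F$-torus. This is where simple connectedness (or strong regularity) enters.
\item $H^1(F,T')=0$. This holds because $F=k((\varpi))$ with $k$ algebraically closed has cohomological dimension $\le 1$ (Lang, Steinberg).
\end{itemize}
With this replacement the argument is complete. It also shows that your remark about using $\kappa_G(\ga)=p_G(\la)$ ``instead'' in the reductive case is misplaced. Rational conjugacy of strongly regular semisimple elements needs no $\kappa_G$ condition, since their centralizers are tori.
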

The implication (1)$\Rightarrow$(4) is proved in \cite{KoV}. The remaining is in \cite[Proposition 3.1.6]{Chi}.\par 
Note that since we have assumed $G$ to be semisimple simply-connected, the condition $\kappa_G(\ga)=p_G(\la)$ is automatic. However the result is true for any split connected reductive group. \par 
From now on, we assume that $a:=\chi_+(\ga_\la)\in\fC_+(\cO)$. Let $\bar a\in\fC_+(k)$ be the reduction of $a$ mod $\varpi$. 

Let $J_a$ be the $\cO$-scheme defined as the pull-back of $J$ along $a:\spec\cO\to\fC_+$. Then $J_a$ is an integral model of $G_\ga$. We consider the commutative $k$-group $P_a:=G_\ga(F)/J_a(\cO)$ that acts on the varieties $X_\ga^\la$, $X_\ga^{\le\la}$ and $X_\ga^{\la,\mathrm{reg}}$. The action on the regular locus $X_\ga^{\la,\mathrm{reg}}$ is easier to analyze. See \cite[\S3.9.10]{Chi} for details. 

\begin{prop}[\cite{Chi}]
The morphism $X_\ga^{\la,\mathrm{reg}}\to [(\chi_+^{\mathrm{reg}})^{-1}(\bar a)/\mathrm{Ad}(G)]$ which sends $g$ to the reduction mod $\varpi$ of $g^{-1}\ga_\la g$ induces a bijection between the quotient $X_\ga^{\la,\mathrm{reg}}/P_a$ and the set of irreducible components of $(\chi_+^{\mathrm{reg}})^{-1}(\bar a)$.
\end{prop}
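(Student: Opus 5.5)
We will study the reduction map
\[
\pi\colon X_\ga^{\la,\mathrm{reg}}\to[(\chi_+^{\mathrm{reg}})^{-1}(\bar a)/\mathrm{Ad}(G)],\qquad g\mapsto \overline{g^{-1}\ga_\la g}.
\]
The plan is to show three things: $\pi$ is constant on $P_a$-orbits, each $P_a$-orbit maps to a single component of the fiber $(\chi_+^{\mathrm{reg}})^{-1}(\bar a)$, and the induced map on orbits is bijective onto the set of such components.

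\textbf{Step 1: structure of the fiber.} Each component of $(\chi_+^{\mathrm{reg}})^{-1}(\bar a)$ should be a single regular $G$-orbit. The fiber is smooth of dimension $\dim G-r$ because $\chi_+^{\mathrm{reg}}$ is smooth and $\fC_+\cong\bbA^{2r}$ while $\dim\ving=\dim G+r$. Regular orbits have dimension $\dim G-r$ and are therefore open in the fiber, and there are finitely many of them. Since the fiber is smooth, its connected components are irreducible, so the irreducible components are exactly the regular conjugacy classes in the fiber. Moreover, each $\ving^w$ meets the fiber in exactly one orbit, namely that of $\epsilon_+^w(\bar a)$, because $(\fC_+\times G)/J\cong\ving^w$. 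Hence the stack $[(\chi_+^{\mathrm{reg}})^{-1}(\bar a)/\mathrm{Ad}(G)]$ is, topologically, the finite set of these orbits, and $\pi$ induces a well-defined map to that set.

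\textbf{Step 2: invariance and surjectivity.} For $h\in G_\ga(F)$ we have $(hg)^{-1}\ga_\la(hg)=g^{-1}\ga_\la g$, so $\pi$ is $G_\ga(F)$-invariant, hence $P_a$-invariant. For surjectivity, fix a regular orbit $O$ in the fiber and choose $w$ with $O\subset\ving^w$. Note that $\ga_\la$ lies in $\ving^{w}(F)$ because it is regular semisimple over the generic point. I would lift $\epsilon_+^w(\bar a)$ to $\epsilon_+^w(a)\in\ving^w(\cO)$. Since $\ga_\la$ and $\epsilon_+^w(a)$ have the same image in $\fC_+(F)$ and are both regular semisimple, they are conjugate by some $g\in G(\bar F)$. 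The conjugating elements form a torsor under $J_a$ over $F$. Its class lies in $H^1(F,G_\ga)$ and maps to the trivial class in $H^1(F,G)$; by Steinberg's theorem together with the fact that $F$ has cohomological dimension $\le1$ (the residue field $k$ being algebraically closed), $H^1(F,G)=1$. So we may take $g\in G(F)$, giving a point of $X_\ga^{\la,\mathrm{reg}}$ whose reduction is $\epsilon_+^w(\bar a)\in O$.

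\textbf{Step 3: injectivity on orbits (main obstacle).} Suppose $g_1,g_2\in X_\ga^{\la,\mathrm{reg}}$ have reductions in the same orbit $O\subset\ving^w$. Since $\ving^w$ is open, both $\gamma_i=g_i^{-1}\ga_\la g_i$ lie in $\ving^w(\cO)$. Using $(\fC_+\times G)/J\cong\ving^w$, the transporter $\{h: h^{-1}\gamma_1h=\gamma_2\}$ over $\cO$ is a $J_a$-torsor, since it is the pullback along $a$ of a torsor for the smooth group scheme $J$. As $J_a$ is smooth over the henselian ring $\cO$ and $k$ is algebraically closed, Lang's theorem plus Hensel's lemma give an $\cO$-point $h\in G(\cO)$ of this torsor. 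Then $g_1hg_2^{-1}\in G_\ga(F)$, so $g_1K$ and $g_2K$ lie in the same $G_\ga(F)$-orbit. It remains to see that $G_\ga(F)$-orbits on the regular locus are $P_a$-orbits, i.e. that $J_a(\cO)$ is exactly the stabilizer. Conjugating by $g$ identifies $\mathrm{Stab}(gK)$ with $G_\ga(F)\cap gKg^{-1}$, which is the centralizer of the regular element $\gamma$ in $G(\cO)$, namely $J_a(\cO)$. So the action on each piece is simply transitive modulo $J_a(\cO)$. The delicate point is making this torsor argument rigorous scheme-theoretically, not only on $k$-points, in the infinite-type loop-group setting.
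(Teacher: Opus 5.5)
Your argument is correct and is essentially the one behind \cite[\S3.9.10]{Chi}. There, $\ving^{\mathrm{reg}}$ is covered by the $\ving^w$, and the fact that $[\ving^w/\mathrm{Ad}(G)]$ is a $BJ$-gerbe makes the part of $X_\ga^{\la,\mathrm{reg}}$ reducing into $\ving^w$ a single $P_a$-orbit with stabilizers $J_a(\cO)$; these pieces then match the orbits $G\cdot\epsilon_+^w(\bar a)$, which are exactly the components of the fiber.

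There is one slip to fix in Step 2. Knowing that the class dies in $H^1(F,G)$ and that $H^1(F,G)=1$ gives nothing; what you need is $H^1(F,G_\ga)=1$. This follows from the same Steinberg/cohomological-dimension-one input applied to the torus $G_\ga$. Two minor points: Lang's theorem plays no role in Step 3, since $k$ is algebraically closed. And your scheme-theoretic worry is moot, because the statement only concerns the set of orbits.
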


Since $\ving^{\mathrm{reg}}=\cup_{w\in\mathrm{Cox(W,S)}}\ving^w$ and each quotient $[\ving^w/\mathrm{Ad}(G)]$ is a $BJ$-gerbe, the number of irreducible components of the extended Steinberg fiber $(\chi_+^{\mathrm{reg}})^{-1}(\bar a)$ is bounded above by $\#\mathrm{Cox}(W,S)$. Moreover, we know from Proposition~\ref{prop:nil-cone-irr-components} that equality is reached for $(\chi_+^{\mathrm{reg}})^{-1}(0)=\cN^{\mathrm{reg}}$. This proves the following

\begin{prop}[\cite{Chi}]\label{prop:reg-irr-components}
We have $\#(X_\ga^{\la,\mathrm{reg}}/P_a)\le\#\mathrm{Cox}(W,S)$. If $a:=\chi_+(\ga_\la)\equiv0\mod\varpi$, then equality holds.
\end{prop}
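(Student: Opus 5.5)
The plan is to combine the preceding proposition (the bijection $X_\ga^{\la,\mathrm{reg}}/P_a\cong\mathrm{Irr}((\chi_+^{\mathrm{reg}})^{-1}(\bar a))$) with the structure of $\ving^{\mathrm{reg}}$ coming from the generalized Steinberg sections. The statement then reduces to a statement about the fiber $(\chi_+^{\mathrm{reg}})^{-1}(\bar a)$ over a single point $\bar a\in\fC_+(k)$.

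\emph{Upper bound.} Write $(\chi_+^{\mathrm{reg}})^{-1}(\bar a)=\bigcup_{w\in\mathrm{Cox}(W,S)}\bigl(\ving^w\cap\chi_+^{-1}(\bar a)\bigr)$, using that the $\ving^w$ cover $\ving^{\mathrm{reg}}$. For each $w$, the isomorphism $(\fC_+\times G)/J\cong\ving^w$ identifies $\ving^w\cap\chi_+^{-1}(\bar a)$ with $G/J_{\bar a}$, the orbit of $\epsilon_+^w(\bar a)$.
\begin{itemize}
\item Since $G$ is connected, this orbit is irreducible.
\item Since $\chi_+^{\mathrm{reg}}$ is smooth, the fiber is smooth and all these orbits are open in it, of the common dimension $\dim G-r$.
\end{itemize}
Hence each irreducible component of the fiber is the closure, inside the fiber, of one of these orbits. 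This gives at most $\#\mathrm{Cox}(W,S)$ components. Several $w$ may give the same orbit, but that only lowers the count. Combined with the preceding proposition, this yields $\#(X_\ga^{\la,\mathrm{reg}}/P_a)\le\#\mathrm{Cox}(W,S)$.

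\emph{Equality when $a\equiv 0\bmod\varpi$.} In this case $\bar a=0$, so the fiber is $\cN^{\mathrm{reg}}$. By Proposition~\ref{prop:nil-cone-irr-components}, the map $w\mapsto$ (component containing $\epsilon_+^w(0)$) is a bijection $\mathrm{Cox}(W,S)\to\mathrm{Irr}(\cN^{\mathrm{reg}})$. Therefore $\#\mathrm{Irr}((\chi_+^{\mathrm{reg}})^{-1}(\bar a))=\#\mathrm{Cox}(W,S)$, and the preceding proposition transfers this equality to $X_\ga^{\la,\mathrm{reg}}/P_a$.

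The only point needing care is the claim that the irreducible components of the smooth fiber are exactly the closures of the open orbits $G\cdot\epsilon_+^w(\bar a)$. This follows because these orbits are pairwise either equal or disjoint, and being open they cover the fiber. The real content lies in the two cited propositions, which we take as given here.
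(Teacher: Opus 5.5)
Your argument is correct and is essentially the paper's own. The paper likewise combines the bijection $X_\ga^{\la,\mathrm{reg}}/P_a\cong\mathrm{Irr}\bigl((\chi_+^{\mathrm{reg}})^{-1}(\bar a)\bigr)$ with the cover $\ving^{\mathrm{reg}}=\bigcup_{w}\ving^w$ by neutralized $BJ$-gerbes to get the upper bound, and it invokes Proposition~\ref{prop:nil-cone-irr-components} for equality when $\bar a=0$; you simply spell out the routine point, left implicit there, that each $\ving^w\cap\chi_+^{-1}(\bar a)$ is a single irreducible open orbit.
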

This is \cite[Corollary 3.9.12]{Chi}
Comparing Proposition~\ref{prop:reg-irr-components} and Theorem~\ref{thm:irr-component-hyperbolic}, we get the following purely combinatorics result

\begin{cor}
Let $\la,\mu\in X_*(T)_+$ be dominant coweights. Suppose $\la$ lies in the interior of the Weyl chamber and $\la-\mu$ lies in the interior of the positive coroot cone, then we have $m_{\la\mu}\ge\#\mathrm{Cox}(W,S)$. 
\end{cor}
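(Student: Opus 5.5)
The plan is to specialize Theorem~\ref{thm:irr-component-hyperbolic} and Proposition~\ref{prop:reg-irr-components} to one well-chosen hyperbolic element, and to show that distinct $P_a$-orbits in the regular locus give distinct irreducible components of $[X_\ga^\la/G_\ga(F)]$. I work under the standing assumption of \S\ref{sec:Vinberg} that $G$ is semisimple simply connected.

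\emph{Choice of element and the counts.} Take $\ga=\varpi^\mu t$ with $t\in T(\cO)$ chosen so that $\ga$ is strongly regular semisimple. For instance, let $t$ reduce mod $\varpi$ to a regular element of $T(k)$; this is possible since $k$ is algebraically closed. Then $\ga$ is hyperbolic with $G_\ga=T$ and Newton point $\nu_\ga=\mu$. Since $\mu$ is integral, the $\mu$ of Conjecture~\ref{conj:irr-components} is $\mu$ itself. As $\la-\mu$ is a nonnegative combination of coroots, Theorem~\ref{thm:nonempty} shows that $X_\ga^\la\neq\emptyset$. Theorem~\ref{thm:irr-component-hyperbolic} then gives $\#\mathrm{Irr}[X_\ga^\la/G_\ga(F)]=m_{\la\mu}$. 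It therefore suffices to prove two things:
\begin{enumerate}
\item[(a)] $a=\chi_+(\ga_\la)\equiv 0 \bmod \varpi$, so that Proposition~\ref{prop:reg-irr-components} gives $\#(X_\ga^{\la,\mathrm{reg}}/P_a)=\#\mathrm{Cox}(W,S)$;
\item[(b)] distinct $P_a$-orbits in $X_\ga^{\la,\mathrm{reg}}$ lie in distinct $G_\ga(F)$-orbits of irreducible components of $X_\ga^\la$.
\end{enumerate}

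\emph{Step (a).} I use the $2r$ coordinates on $\fC_+\cong\bbA^{2r}$, namely the $r$ abelianization coordinates $\alpha_i$ and the $r$ characters $\mathrm{tr}\,\rho_i^+$. The first kind take the values $\varpi^{\langle -w_0(\la),\alpha_i\rangle}$ at $\ga_\la$. These have positive valuation because $-w_0(\la)$ is strictly dominant, since $\la$ lies in the interior of the chamber. For the second kind,
\[
\mathrm{tr}\,\rho_i^+(\ga_\la)=\omega_i\bigl(w_0(\varpi^{-w_0\la})^{-1}\bigr)\,\mathrm{tr}\,\rho_i(\varpi^\mu t)=\varpi^{\langle\omega_i,\la\rangle}\sum_{\nu}\varpi^{\langle\nu,\mu\rangle}\nu(t),
\]
where $\nu$ runs over the weights of $V_{\omega_i}$. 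The valuation is at least $\langle\omega_i,\la\rangle+\langle w_0\omega_i,\mu\rangle=\langle\omega_i,\la-\mu^*\rangle$ with $\mu^*=-w_0\mu$, up to the precise $w_0$-conventions in the definition of $\ving$. The hypothesis that $\la-\mu$ is in the interior of the positive coroot cone is $w_0$-symmetric after applying $*$. Dually pairing with $\omega_i$ then extracts a strictly positive coefficient, so each such coordinate is divisible by $\varpi$. Hence $\bar a=0$. I expect the main obstacle to be this bookkeeping: checking that the normalization $\rho_i^+(t,g)=\omega_i(w_0(t)^{-1})\rho_i(g)$ really matches $\la$ against $\mu$ or $\mu^*$ as claimed. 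If the pairing instead comes out as $\langle\omega_i,\la^*-\mu\rangle$, the conclusion is the same because the hypotheses are stable under $*$.

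\emph{Step (b).} By Proposition~\ref{prop:reg-irr-components} and the torsor description, $X_\ga^{\la,\mathrm{reg}}$ is an open subscheme that is a disjoint union of $\#\mathrm{Cox}(W,S)$ orbits of $P_a$. Since $P_a=G_\ga(F)/J_a(\cO)$, the $G_\ga(F)$-orbits of irreducible components of $X_\ga^{\la,\mathrm{reg}}$ are in bijection with these $P_a$-orbits. The closure in $X_\ga^\la$ of an irreducible component of an open subscheme is an irreducible component of $X_\ga^\la$. Distinct components of the open subscheme have distinct closures, because each is recovered by intersecting its closure with the open set. Moreover $G_\ga(F)$ preserves $X_\ga^{\la,\mathrm{reg}}$, so this assignment is $G_\ga(F)$-equivariant and injective on orbits. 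This gives
\[
\#\mathrm{Cox}(W,S)=\#(X_\ga^{\la,\mathrm{reg}}/P_a)\le\#\mathrm{Irr}[X_\ga^\la/G_\ga(F)]=m_{\la\mu}.
\]
This step does not need the dimension formula, since any component of an open subset already yields a component of the whole space.
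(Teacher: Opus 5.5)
Your strategy is the paper's: take a hyperbolic $\ga\in\varpi^\mu T(\cO)\cap G(F)^{\mathrm{rs}}$, show $\bar a=0$, and compare Proposition~\ref{prop:reg-irr-components} with Theorem~\ref{thm:irr-component-hyperbolic}. Your step (b) is correct. It also makes explicit something the paper leaves implicit: taking closures gives a $G_\ga(F)$-equivariant injection from the components of the open, $G_\ga(F)$-stable regular locus into $\mathrm{Irr}(X_\ga^\la)$, and this is what turns the count for the regular locus into the inequality for $X_\ga^\la$.

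The problem is step (a). The paper only asserts it, and it is the only place the hypotheses on $\la$ and $\la-\mu$ are used. Your lower bound $\langle\omega_i,\la-\mu^*\rangle$, and your fallback $\langle\omega_i,\la^*-\mu\rangle$, are not positive in general. The appeal to $*$-stability does not fix this: $*$ sends $\la-\mu$ to $\la^*-\mu^*$, whereas your pairings twist only one of the two coweights.

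Here is a counterexample. Take $G=\mathrm{SL}_3$, $\mu=(2,-1,-1)$ and $\la=(3,-1,-2)$. Then $\la$ is regular dominant and $\la-\mu=\alpha_1^\vee+\alpha_2^\vee$. But $\la-\mu^*=2\alpha_1^\vee$ and $\la^*-\mu=2\alpha_2^\vee$, so the relevant pairings vanish. With your normalization, writing $t=\mathrm{diag}(t_1,t_2,t_3)$, the weight $\epsilon_2+\epsilon_3$ contributes the unit $t_2t_3$ to $\mathrm{tr}\,\rho_2^+(\ga_\la)$, so $\bar a\neq 0$.

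The cause is the normalization $\omega_i(w_0(t)^{-1})\rho_i(g)$ displayed in \S\ref{sec:Vinberg}. It is incompatible with Proposition~\ref{prop:vin-cartan-decomposition} as soon as $\la\neq -w_0\la$. Already for $\ga=\varpi^\la$, the lowest-weight entry of $\rho_i^+(\ga_\la)$ has valuation $\langle\omega_i,\la+w_0\la\rangle$. These numbers sum to zero over $i$, because $-w_0$ permutes the $\omega_i$, so one of them is negative.

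The fix is to let Proposition~\ref{prop:vin-cartan-decomposition} determine the scalar. Requiring $\rho_i^+((\varpi^\la)_\la)$ to be integral and nonzero modulo $\varpi$ forces the scalar to be $\varpi^{\langle\omega_{i^*},\la\rangle}$, where $\omega_{i^*}:=-w_0\omega_i$. This is the normalization $\omega_i(t)\rho_i(g)$ used in \S\ref{sec:example}. With it, every weight term of $\mathrm{tr}\,\rho_i^+(\ga_\la)$ has valuation at least $\langle\omega_{i^*},\la\rangle+\langle w_0\omega_i,\mu\rangle=\langle\omega_{i^*},\la-\mu\rangle$. This is the coefficient of $\alpha_{i^*}^\vee$ in $\la-\mu$, hence strictly positive. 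Now $\la$ and $\mu$ are twisted together, so the hypothesis applies directly. The abelianization coordinates are fine as you treated them, and with this correction your argument is complete.
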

\begin{proof}
Choose an element $\ga\in\varpi^\mu T(\cO)\cap G(F)^{\mathrm{rs}}$. Let $a:=\chi_+(\ga_\la)$ as before. The regularity conditions on $\la$ and $\la-\mu$ gaurantees that $\bar a=0$. Hence by Proposition~\ref{prop:reg-irr-components}, we have $\#\mathrm{Irr}[X_\ga^{\la,\mathrm{reg}}/ G_\ga(F)]=\#\mathrm{Cox}(W,S)$. On the other hand, since $\ga$ is hyperbolic, we have $\nu_\ga=\mu$ and $\#\mathrm{Irr}[X_\ga^\la/ G_\ga(F)]=m_{\la\mu}$ by Theorem~\ref{thm:irr-component-hyperbolic}.
\end{proof}
It would be interesting to find a straightforward proof of this combinatorics result.\par 
Notice that $\#\mathrm{Cox}(W,S)$ is a constant independant of $\la,\mu$ while the weight multiplicity $m_{\la\mu}$ grows when $\la$ increases. Hence in general we expect most irreducible components of $X_\ga^\la$ to be irregular. The hard step for proving Theorem~\ref{thm:dim-formula} is to bound the dimension of the irregular locus and show that $\dim X_\ga^{\la,\mathrm{reg}}=\dim X_\ga^\la$. On any irregular component of $X_\ga^\la$, the group $P_a$ will have infinitely many orbits and it appears harder to bound their dimension. In the next section, we discuss the global method to prove this bound.

\section{The Hitchin-Frenkel-Ng\^o fibration}\label{sec:Hitchin}
\subsection{Generalities on arc spaces and mapping stacks}
For any $k$-stack $\cX$, recall that the arc stack $L^+\cX$ and loop stack $L\cX$ are groupoid-valued functors that associates any $k$-algebra the groupoinds
\[L^+\cX(R)=\cX(R[[\varpi]]),\quad L\cX(R)=\cX(R((\varpi))).\]
For any point $\ga\in\cX(F)$, let $(L^+\cX)_\ga$ be the fiber of the natural morphism $L^+\cX\to L\cX$ over $\ga\in (L\cX)(k)=\cX(F)$. Then for any $k$-algebra $R$ the groupoid $(L^+\cX)_\ga(R)$ classifies pairs $(\alpha,\iota)$ where $\alpha\in\cX(R[[\varpi]])$ and $\iota:\alpha|_{\spec R((\varpi))}\xrightarrow{\sim}\ga$ is an isomorphism in $L\cX(R)$. We note that if $\cX$ has separated diagonal, then objects in $(L^+\cX)_\ga(R)$ have no non-trivial automorphisms. Also, if $\ga_1,\ga_2\in\cX(F)$ are isomorphic, then any isomorphism between $\ga_1$ and $\ga_2$ induces an isomorphism $(L^+\cX)_{\ga_1}\cong (L^+\cX)_{\ga_2}$.
In particular, if we denote by $\mathrm{Aut}_\ga$ the automorphism $F$-group scheme of $\ga$, then there is a natural action of the loop group $L\mathrm{Aut}_\ga$ on $(L^+\cX)_\ga$.\par 
A standard method of studying $(L^+\cX)_\ga$ and the action of $L\mathrm{Aut}_\ga$ is to consider the global mapping stack of $\cX$ for a projective smooth curve $C$. To be more precise, let $\mathrm{Map}(C,\cX)$ be the stack over $k$ such that for any $k$-algebra $R$, its groupoid of $R$-points classify morphisms $C\otimes_kR\to\cX$.\par 
In practice, we will usually consider certain open substack of the mapping stack. For any open substack $\cX'\subset\cX$, the stack $\mathrm{Map}^\circ(C,\cX\supset\cX')$ classifies maps $C\to \cX$ such that the generic point of $C$ is mapped into the open substack $\cX'$

\subsection{Kottwitz-Viehmann varieties via arc spaces}
We apply the previous local construction to the stacks $[\ving/\mathrm{Ad}(G)]$ (resp. $[\ving^0/\mathrm{Ad}(G)]$, $[\ving^{\mathrm{reg}}/\mathrm{Ad}(G)]$) and their $F$-points defined by $\ga_\la\in G_+(F)^{\mathrm{rs}}\subset\ving^{\mathrm{reg}}(F)$, where $\ga_\la:=(\varpi^{-w_0(\la)},\ga)$ is as in Proposition~\ref{prop:vin-cartan-decomposition}, to obtain spaces
\[\cX_\ga^{\le\la}:=(L^+[\ving/\mathrm{Ad}(G)])_\ga,\quad
\cX_\ga^\la:=(L^+[\ving^0/\mathrm{Ad}(G)])_\ga,\quad 
\cX_\ga^{\la,\mathrm{reg}}:=(L^+[\ving^{\mathrm{reg}}/\mathrm{Ad}(G)])_\ga
\]

These spaces are highly non-reduced. Taking their underlying reduced structures we get the locally of finite type schemes $X_\ga^{\le\la}$, $X_\ga^\la$ and $X_\ga^{\la,\mathrm{reg}}$ whose set-theoretic description we have seen before.\par 
Unraveling the definitions, we have the following more concrete description of the spaces $\cX_\ga^{\le\la}$ (and similarly for $\cX_\ga^\la$ and $\cX_\ga^{\la,\mathrm{reg}}$). For any $k$-algebra $R$, the set $\cX_\ga^{\le\la}(R)$ consists of isomorphism classes of triples $(E,\varphi,\iota)$ where
\begin{itemize}
    \item $E$ is a $G$-torsor on $\spec R[[\varpi]]$;
    \item $\varphi$ is a section of the scheme $E\times^G\ving$ over $\spec R[[\varpi]]$ (which is equivalent to a $G$-equivariant morphism $E\to\ving$);
    \item $\iota$ is a trivialization of $E|_{\spec R((\varpi))}$ such that under the induced isomorphism 
    \[(E|_{\spec R((\varpi))})\times^G\ving\cong\ving\times\spec R((\varpi))\]
    the section $\varphi$ is taken to (the graph of) $\ga_\la$.
\end{itemize}

We assume that $a:=\chi_+(\ga_\la)$ lies in $\fC_+(\cO)$ so that $X_\ga^\la$ is nonempty by Proposition~\ref{prop:KV-nonempty}. Then for any $w\in\mathrm{Cox}(W,S)$, $\ga$ and $\epsilon_+^w(a)$ are isomorphic in the groupoid $[\ving/\mathrm{Ad}(G)](F)$ (resp. $[\ving^0/\mathrm{Ad}(G)](F)$ and $[\ving^{\mathrm{reg}}/\mathrm{Ad}(G)](F)$). Hence the scheme $X_\ga^{\le\la}$ (resp. $X_\ga^\la$ and $X_\ga^{\la,\mathrm{reg}}$) is isomorphic to $X_{\epsilon_+^w(a)}^{\le\la}$ (resp. $X_{\epsilon_+^w(a)}^\la$ and $X_{\epsilon_+^w(a)}^{\la,\mathrm{reg}}$). In other words, the isomorphism classes of $X_\ga^{\le\la}$, $X_\ga^\la$ and $X_\ga^{\la,\mathrm{reg}}$ only depend on $a=\chi_+(\ga_\la)$. For any $a\in\fC_+(\cO)\cap\fC_+(F)^{\mathrm{rs}}$, we simply denote $X_a:=X_{\epsilon_+^w(a)}^{\le\la}$ and $X_a^{\mathrm{reg}}:=X_{\epsilon_+^w(a)}^{\la,\mathrm{reg}}$ for some fixed $w\in\mathrm{Cox}(W,S)$. 

\subsection{Hitchin-Frenkel-Ng\^o fibration}
Now we apply the global mapping stack construction. See \cite[\S4]{Chi} for details. It turns out that to get big enough mapping stacks, one has to quotient by the central torus and consider the stack 
\[\cM:=\mathrm{Map}^\circ(C,[\ving/Z_+\times\mathrm{Ad}(G)]\supset [G_+^{\mathrm{rs}}/Z_+\times\mathrm{Ad}(G)])\] 
where $Z_+\cong T$ is the center of $G_+$. 
Let $\fC_+^\circ:=G_+^{\mathrm{rs}}//\mathrm{Ad}(G)$ be the open subscheme of $\fC_+$ consisting of regular semisimple conjugacy classes in the unit group $G_+$. In particular, the image of $\fC_+^\circ$ under the projection to the universal abelianization $\fC_+\to\bbA^r$ equals $\bbG_m^r$, the complement of the coordinate axis in $\bbA^r$. Also consider the stack \[\cA:=\mathrm{Map}^\circ(C,[\fC_+/Z_+]\supset [\fC_+^0/Z_+]).\]

There are natural morphisms from $\cM$ and $\cA$ to $\mathrm{Bun}_{Z_+}=\mathrm{Map}(C,BZ_+)$, the moduli stack of $Z_+$-torsors on $C$. Moreover, there is a natural morphism $f:\cM\to\cA$ over $\mathrm{Bun}_{Z_+}$. In practice, we usually fix a $Z_+$-torsor $\cL$ on $C$ and consider the fibers $\cM_\cL$ (resp. $\cA_\cL$) of $\cM$ (resp. $\cA$) over $\cL$, together with the morphism 
$f_\cL:\cM_\cL\to\cA_\cL$. For suitably chosen $\cL$, the stack $\cA_\cL$ is represented by an open subscheme of a $k$-vector space and any extended Steinberg section $\epsilon_+^w:\fC_+\to\ving^{\mathrm{reg}}$ induces a section of $f_\cL$. We assume this holds from now on and fix a section $\epsilon_\cL:\cA_\cL\to\cM_\cL$ of $f_\cL$ that is induced by $\epsilon_+^w$ for some $w\in\mathrm{Cox}(W,S)$. We refer to the morphism $f_\cL$ as the \emph{Hitchin-Frenkel-Ng\^o fibration}. \par 
More concretely, the stack $\cM_\cL$ classifies pairs $(\cE,\varphi)$ where $\cE$ is a $G$-torsor on $C$ and $\varphi$ is a section of the scheme $\cE\times^G\ving^\cL$ over $C$, where $\ving^\cL:=\cL\times^{Z_+}\ving$. We refer to such a pair as a \emph{Hitchin-Vinberg pair}. The morphism $f_\cL$ sends the pair $(\cE,\varphi)$ to $\chi_+(\varphi)$, which defines a section of $\fC_+^\cL:=\cL\times^{Z_+}\fC_+$ over $C$.

Replacing $\ving$ by its open subschemes $\ving^0$ (resp. $\ving^{\mathrm{reg}}$) in the definition of $\cM_\cL$, we define open substacks $\cM_\cL^0\supset\cM_\cL^{\mathrm{reg}}$. The image of the global section $\epsilon_\cL$ lies in $\cM_\cL^{\mathrm{reg}}$.

\subsubsection{Relation with Kottwitz-Viehmann varieties}
The regular centralizer $J$ over $\fC_+$ is $Z_+$-equivariant and descends to a commutative smooth group scheme over $[\fC_+/Z_+]$, which we still denote by $J$. For any closed point $a\in\cA_\cL$, let $J_a$ be the commutative smooth group scheme over $C$ defined as the pull-back of $J$ along $a:C\to[\fC_+/Z_+]$. Let $\cP_a:=\mathrm{Bun}_{J_a}=\mathrm{Map}(C,BJ_a)$ be the Picard stack of $J_a$-torsors on $C$. The $Z_+$-equivariant action of $BJ$ on the stack $[\ving/\mathrm{Ad}(G)]$ (and its open substacks $[\ving^0/\mathrm{Ad}(G)]$, $[\ving^{\mathrm{reg}}/\mathrm{Ad}(G)]$) induces an action of the Picard stack $\cP_a$ on the fiber $\cM_a:=f_\cL^{-1}(a)$ (and its open substacks $\cM_a^0=\cM_a\cap\cM_\cL^0$, $\cM_a^{\mathrm{reg}}:=\cM_a\cap\cM_\cL^{\mathrm{reg}}$). This is the global analogue of the local $P_a$ action on the Kottwitz-Viehmann varieties.\par 
There is an open subset $\cA_\cL^{\mathrm{ani}}\subset\cA_\cL$ consisting of $a\in\cA_\cL$ such that $\cP_a$ is of finite type. With suitable choice of $\cL$, this subset will be non-empty.
\begin{thm}[Product formula]\label{thm:prod-formula}
For each closed point $a\in\cA_\cL^{\mathrm{ani}}$, let $U_a$ be the nonempty open subset of $C$ defined as the inverse image of $[\fC_+^{\mathrm{rs}}/Z_+]$ under $a:C\to[\fC_+/Z_+]$. Then there is a homeomorphism of stacks
\[\prod_{v\in C-U_a}[X_{a_v}/P_{a_v}]\cong [\cM_a/\cP_a]\]
\end{thm}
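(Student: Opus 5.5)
Following Ngô's proof of the product formula for Hitchin fibers \cite[4.15.1]{Ngo10}, I would compare both sides with an intermediate space built by gluing local data to the regular global section $\epsilon_\cL(a)$.

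The first step is the \emph{Beauville--Laszlo gluing}. Fix $a\in\cA_\cL^{\mathrm{ani}}(k)$ and the global pair $(\cE_0,\varphi_0):=\epsilon_\cL(a)\in\cM_a^{\mathrm{reg}}$. For each $v\in C-U_a$, choose a uniformizer $\varpi_v$ and a trivialization of $\cL$ on the formal disc $D_v$. The restriction of $(\cE_0,\varphi_0)$ to the punctured disc $D_v^\times$ is then a point of $\ving(F_v)$ lying over $a_v\in\fC_+(\cO_v)\cap\fC_+(F_v)^{\mathrm{rs}}$, and its restriction to $D_v$ is $\epsilon_+^w(a_v)$. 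So the local space at $v$ is exactly $X_{a_v}=(L^+[\ving/\mathrm{Ad}(G)])_{\epsilon_+^w(a_v)}$. Gluing along $D_v^\times$ defines a morphism
\[\prod_{v\in C-U_a}X_{a_v}\to\cM_a .\]
A point of the product replaces $(\cE_0,\varphi_0)|_{D_v}$ by another integral pair that is generically identified with it. The characteristic map is unchanged, so the glued pair lies in the fiber over $a$.

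The second step is to make this map equivariant and descend it. The group $\prod_v P_{a_v}$, with $P_{a_v}=G_{\ga}(F_v)/J_{a_v}(\cO_v)$ acting through the local regular centralizer, maps to $\cP_a$ by the analogous gluing of $J_a$-torsors, since $J_a|_{U_a}$ is a torus and torsors can be modified at the points $v$. This yields a morphism
\[\prod_v X_{a_v}\times^{\prod_v P_{a_v}}\cP_a\to\cM_a ,\]
and hence a morphism $\prod_v[X_{a_v}/P_{a_v}]\to[\cM_a/\cP_a]$.

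The third step, and the main work, is to show this last morphism is a homeomorphism, i.e. bijective on geometric points and universally so. Take any $(\cE,\varphi)\in\cM_a$. Over $U_a$, $\varphi$ lands in the regular semisimple locus of $\ving$, where $[\ving^{\mathrm{rs}}/\mathrm{Ad}(G)]\to\fC_+^{\mathrm{rs}}$ is a $BJ$-gerbe neutralized by $\epsilon_+^w$. Therefore $(\cE,\varphi)|_{U_a}$ differs from $(\cE_0,\varphi_0)|_{U_a}$ by a $J_a|_{U_a}$-torsor $\cT$. Extend $\cT$ to a $J_a$-torsor on $C$ by choosing arbitrary extensions at the points $v$. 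After twisting by it, $(\cE,\varphi)$ and the twisted $(\cE_0,\varphi_0)$ agree on $U_a$, so the local germs at $v$ define points of the $X_{a_v}$. This gives surjectivity.

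Injectivity on points modulo the groups needs three facts:
\begin{enumerate}
\item over $U_a$, automorphisms are exactly $J_a$;
\item two extensions of $\cT$ differ exactly by elements of $\prod_v P_{a_v}$;
\item $J_a(U_a)$ acts compatibly on both sides.
\end{enumerate}
This is essentially a restatement of descent for $J_a$-torsors:
\[\mathrm{Bun}_{J_a}\simeq\prod_v J_{a_v}(\cO_v)\backslash G_{\ga}(F_v)\,/\,\mathrm{Bun}_{J_a|U_a}\]
in the appropriate sense.

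The expected obstacle is that this gives only a bijection on points and not an isomorphism. The map is a homeomorphism rather than an isomorphism because of the non-reducedness of the local arc spaces, and because the global and local automorphism groups (such as $\pi_0$ and finite stabilizers) need not match scheme-theoretically. To finish, I would show the morphism is finite type, universally bijective and radicial on geometric points, and then invoke the fact that such a representable morphism is a universal homeomorphism. The anisotropy assumption $a\in\cA_\cL^{\mathrm{ani}}$ is used here so that $\cP_a$ is of finite type and the quotients are algebraic stacks of finite type. Finally, one must check that the local arcs with $\varphi$ landing in $\ving$ but not in $\ving^0$ are handled uniformly. This is harmless because the gluing argument never used regularity of the modified pair, only that its generic fiber matches the regular section.
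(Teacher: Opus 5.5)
Your construction of the morphism and your argument for bijectivity on points match the paper's sketch. You glue the local points $x_v\in X_{a_v}$ to $\epsilon_\cL(a)|_{U_a}$ by Beauville--Laszlo. Conversely, you use the neutral $BJ$-gerbe structure over the regular semisimple locus to twist a given pair by $\cP_a$ until its restriction to $U_a$ is $\epsilon_\cL(a)$.

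One point you pass over: extending $\mathcal{T}$ across each $v$ requires $\mathcal{T}|_{\spec F_v}$ to be trivial. This holds because $J_a|_{\spec F_v}=G_\ga$ is a torus, and $H^1(F_v,G_\ga)=0$ for $F_v=k((\varpi_v))$ with $k$ algebraically closed (Steinberg). You need this over every algebraically closed extension of $k$ if you want bijectivity on all geometric points.

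The step that does not work as written is the final upgrade to a homeomorphism. It is not true that a representable morphism of finite type that is surjective and radicial is a universal homeomorphism. For example, $\bbG_m\sqcup\spec k\to\bbA^1$ (the inclusion together with the origin) is such a morphism. Yet the closed subset $\bbG_m$ of the source has non-closed image, so the map is not a homeomorphism.

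A universally bijective morphism is a universal homeomorphism precisely when it is also universally closed (equivalently, integral). Nothing in your argument supplies this. In particular, ``bijective on geometric points and universally so'' is not what being a homeomorphism means. The only thing you extract from anisotropy, namely that $\cP_a$ is of finite type, gives no closedness either.

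You need an additional input. One possible route is to show the gluing morphism is proper. Properness of $f_\cL$ over $\cA^{\mathrm{ani}}_\cL$ makes $\cM_a$ proper, which is the natural global ingredient. You would combine it with a properness statement for the source, which is where you would need the local spaces $X_{a_v}$ to be proper modulo a lattice in $G_\ga(F_v)$. A proper, radicial, surjective morphism is finite, and hence a universal homeomorphism. Alternatively, verify the valuative criterion for the gluing map directly.
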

This is analogous to Ng\^o's product formula in \cite[\S4.15]{Ngo10}. 
The idea is as follows. For each $v\in C-U_a$, the local data $x_v\in X_{a_v}$ defines a Hitchin-Vinberg pair over the formal disc $\spec\cO_v$. We glue them with the restriction of $\epsilon_\cL(a)$ over $U_a$ to obtain a Hitchin-Vinberg pair on $C$. Conversely, for any Hitchin-Vinberg pair $(\cE,\varphi)$ in $\cM_a$, after twisting by an element in the global Picard $\cP_a$, we may assume that the restriction of $(\cE,\varphi)$ to $U_a$ is isomorphic to $\epsilon_\cL(a)$. Fixing such an isomorphism, then the restriction of $(\cE,\varphi)$ to the punctured discs $\spec\cO_v$ for $v\in C-U_a$ gives points in the Kottwitz-Viehmann varieties $X_{a_v}$.\par

Using Theorem~\ref{thm:prod-formula}, and some delicate local-global approximation arguments, the key step in the proof of Theorem~\ref{thm:dim-formula} (i.e. showing that $\dim X_\ga^\la=\dim P_a$) is reduced to proving the global statement that $\dim\cM_a=\dim\cP_a$. For this, we use a specialization argument to reduce to the dimension formula of Kottwitz-Viehmann varieties $X_\ga^\la$ such that either $\ga$ is hyperbolic conjugacy class, or $\la=0$. Both cases can be proven by purely local methods.\par 
There are two key ingredients behind this specialization argument: first, as $a$ varies, the Picard stacks $\cP_a$ form a \emph{smooth} family over $\cA_\cL$; second, the fibration $f_\cL$ is proper when restricted to $\cA_\cL^{\mathrm{ani}}$ so one can use semi-continuity of fiber dimension and the specialization argument to bound the dimension $\cM_a$ by the dimension of $\cP_a$. Here we need semi-continuity on the target of the morphism, for which properness is crucial.

\section{An example in $\mathrm{SL}_3$ case}\label{sec:example}
The goal of this section is to illustrate some differences between geometry of affine Springer fibres in the Lie algebra case and in the group case (i.e. the Kottwitz-Viehmann varieties) by studying a concrete example for the group $\mathrm{SL}_3$.\par 
Recall that in the Lie algebra case, the affine Springer fibers has a dense open subset (the ``regular locus") which is a torsor under a commutative algebraic group. Hence questions concerning geometries of such affine Springer fibers (for example dimension, irreducible components, equidimensionality etc.) could be answered by studying this commutative algebraic group, whose geometry is simpler than the affine Springer fiber itself.\par 
In the group case, one can still define the notion of ``regular locus" and construct an action of a commutative algebraic group. However, the example we present below will show the following:
\begin{itemize}
\item The ``regular locus" is no longer a torsor under this algebraic group in general (i.e. the action is not transitive).
\item The ``regular locus" is not dense in general and there will be irreducible components disjoint from the "regular locus".
\end{itemize} 
Our example will have the following feature: it has 3 irreducible components (after quotient by the action of the commutative algebraic group), among which 2 components are generically regular and the remaining one is irregular. We are also able to describe the generically regular components using the theory of Lusztig-He. \par 
Throughout this section we assume for simplicity that $k$ is an algebraically closed field of characteristic zero.

\subsection{Vinberg monoid for SL(3)}
\subsubsection{}
Consider the vector space $V=k^3$ with standard basis $v_1,v_2,v_3$. Then $\wedge^2 V\cong k^3$ where the isomorphism is given by the basis
\[\{v_2\wedge v_3, v_3\wedge v_1, v_1\wedge v_2\}\]
For
\[g=\begin{bmatrix}
a_{11}&a_{12}&a_{13}\\a_{21}&a_{22}&a_{23}\\a_{31}&a_{32}&a_{33}
\end{bmatrix}\in\mathrm{GL}_3,\]
under the above basis, the matrix of $g$ acting on $\wedge^2V$
is
\[\wedge^2g=\begin{bmatrix}
a_{22}a_{33}-a_{23}a_{32} & a_{23}a_{31}-a_{21}a_{33} & a_{21}a_{32}-a_{31}a_{22}\\
a_{13}a_{32}-a_{12}a_{33} & a_{11}a_{33}-a_{13}a_{31} & a_{12}a_{31}-a_{11}a_{32}\\
a_{12}a_{23}-a_{13}a_{22} & a_{13}a_{21}-a_{11}a_{23} & a_{11}a_{22}-a_{12}a_{21}
\end{bmatrix}=(\det g)\cdot {}^tg^{-1}\]

\subsubsection{}
Let $G=\mathrm{SL}_3$, $T$ the diagonal torus in $G$ and $U$ the group of upper triangular unipotent matrices. Consider the enhanced group $G_+:=(T\times G)/Z_G$ where $Z_G\cong\mu_3$ acts anti-diagonally. Consider the open embedding
\begin{equation}\label{embed-G+-equation}
\xymatrix@R=1pt{
G_+\ar[r] & \bbA^2\times\mathrm{Mat}_3\times\mathrm{Mat}_3\\
(t,g)\ar@{|->}[r] & (\alpha_1(t),\alpha_2(t),\omega_1(t)g,\omega_2(t)({}^tg^{-1})) 
}
\end{equation}
The Vinberg monoid $V_G$ is defined as the normalization of the closure of $G_+$ in $\bbA^2\times\mathrm{Mat}_3\times\mathrm{Mat}_3$. There is a smooth open subscheme $V_G^0\subset V_G$ defined as the normalization of the closure of $G_+$ in $\bbA^2\times(\mathrm{Mat}_3-\{0\})\times(\mathrm{Mat}_3-\{0\})$.\par 
The group $G$ acts by conjugation on $V_G$. For any $\gamma\in V_G$, let $G_\gamma$ be the centralizer of $\gamma$ in $G$ under this adjoint action. We define the regular locus of the Vinberg monoid to be
\[V_G^{\mathrm{reg}}=\{\gamma\in V_G|\dim(G_\gamma)=2\}\]
where we recall that $G=\mathrm{SL}_3$ has rank 2.\par 
The centralizer can be calculated explicily using the embedding \eqref{embed-G+-equation}. In other words, if an element $\gamma\in V$ is represented by the quadruple
\[\gamma=(a_1,a_2,A,B)\in\bbA^2\times\mathrm{Mat}_3\times\mathrm{Mat}_3\]
then for any $h\in G$, we have
\[\mathrm{Ad}(h)\gamma=(a_1,a_2,hAh^{-1}, {}^th^{-1}B({}^th))\]
This allow us to explicitly determine the centralizer $G_\gamma$.

\subsection{Conjugacy classes in nilpotent cone}
Recall the extended Steinberg map
\[\xymatrix@R=1pt{
\chi_+: V\ar[r] & \fC_+=\bbA^2\times\bbA^2\\
(a_1,a_2,A,B)\ar@{|->}[r] & (a_1,a_2,\mathrm{tr}(A),\mathrm{tr}(B))
}\]
The nilpotent cone is by definition $\mathcal{N}:=\chi_+^{-1}(0)$. Moreover, we let $\mathcal{N}^0=\mathcal{N}\cap V^0$ and $\mathcal{N}^{\mathrm{reg}}:=\mathcal{N}\cap V^{\mathrm{reg}}$.\par 
In this section, we recall the adjoint $G$-orbits on $\mathcal{N}$ following Lusztig-He. In particular, we will see that the conjugacy classes in $\mathcal{N}^{\mathrm{reg}}$ corresponds bijectively to Coxeter elements in the Weyl group. Since for $G=\mathrm{SL}_3$ there are 2 Coxeter elements (once we fix the simple reflections in the Weyl group), hence there are 2 conjugacy classes in $\mathcal{N}^{\mathrm{reg}}$.\par 
The Weyl group for $G=\mathrm{SL}_3$ is $W=S_3$, which is generated by two simple reflections $s_1=(12)$, $s_2=(23)$. We fix representatives of the simple reflections in $G$ as follows:
\[\dot{s_1}=\begin{bmatrix}
0&-1&0\\1&0&0\\0&0&1
\end{bmatrix},\quad\dot{s_2}=\begin{bmatrix}
1&0&0\\0&0&-1\\0&1&0
\end{bmatrix}\]
The long element of the $W$ will be $w_0=(13)$ with representative $\dot{w_0}=\begin{bmatrix}
0&0&1\\0&-1&0\\1&0&0
\end{bmatrix}$. \par 
The following idempotents in $V$ will play an important role in the description of conjugacy classes:
\[e_0=\left(0,0,\begin{bmatrix}1&0&0\\0&0&0\\0&0&0\end{bmatrix},\begin{bmatrix}0&0&0\\0&0&0\\0&0&1\end{bmatrix}\right),\quad h_0=w_0e_0w_0=\left(0,0,\begin{bmatrix}0&0&0\\0&0&0\\0&0&1\end{bmatrix},\begin{bmatrix}1&0&0\\0&0&0\\0&0&0\end{bmatrix}\right)\]

\[e_1=\left(1,0,\begin{bmatrix}1&0&0\\0&1&0\\0&0&0\end{bmatrix},\begin{bmatrix}0&0&0\\0&0&0\\0&0&1\end{bmatrix}\right),\quad h_1=w_0e_1w_0=\left(1,0,\begin{bmatrix}0&0&0\\0&1&0\\0&0&1\end{bmatrix},\begin{bmatrix}1&0&0\\0&0&0\\0&0&0\end{bmatrix}\right)\]

\[e_2=\left(0,1,\begin{bmatrix}1&0&0\\0&0&0\\0&0&0\end{bmatrix},\begin{bmatrix}0&0&0\\0&1&0\\0&0&1\end{bmatrix}\right),\quad h_2=w_0e_2w_0=\left(0,1,\begin{bmatrix}0&0&0\\0&0&0\\0&0&1\end{bmatrix},\begin{bmatrix}1&0&0\\0&1&0\\0&0&0\end{bmatrix}\right)\]

In the notation of Vinberg and Lusztig-He, we have $e_0=e_{\varnothing}$, $e_1=e_{\{\alpha_1\}}$, $e_2=e_{\{\alpha_2\}}$ and similarly for $h_i$. \par 
There are 2 Coxeter elements in the Weyl group $W$:  $w_1=s_1s_2=(123)$ and $w_2=s_2s_1=(132)$. We choose representatives of $w_1,w_2$ in $G$ as follows:
\[\dot{w_1}=\begin{bmatrix}0&0&1\\1&0&0\\0&1&0\end{bmatrix},\quad\dot{w_2}=\begin{bmatrix}
0&1&0\\0&0&1\\1&0&0
\end{bmatrix}\]
According to Lusztig-He, the following will be representatives of the 2 conjugacy classes in $\mathcal{N}^{\mathrm{reg}}$:
\[\dot{w_1}h_0=\left(0,0,\begin{bmatrix}0&0&1\\0&0&0\\0&0&0\end{bmatrix},\begin{bmatrix}
0&0&0\\1&0&0\\0&0&0\end{bmatrix}\right),\quad
\dot{w_2}h_0=\left(0,0,\begin{bmatrix}
0&0&0\\0&0&1\\0&0&0\end{bmatrix},\begin{bmatrix}
0&0&0\\0&0&0\\1&0&0\end{bmatrix}\right)\]
Let $\mathcal{N}_i^{\mathrm{reg}}:=\mathrm{Ad}(G)(\dot{w_i}h_0)$ ($i=1,2$) be the corresponding orbits. Then we have
\[\mathcal{N}^{\mathrm{reg}}=\cN_1^{\mathrm{reg}}\sqcup\cN_2^{\mathrm{reg}}.\]
Let $\cN^0_{\mathrm{irreg}}$ be the union of non-regular orbits in $\cN^0$, then we have
\[\cN^0=\cN_1^{\mathrm{reg}}\sqcup\cN_2^{\mathrm{reg}}\sqcup\cN^0_{\mathrm{irreg}}\]
Then according to Lusztig-He we have 
\[\cN^0_{\mathrm{irreg}}=\mathrm{Ad}(G)[(B\times B)\cdot\dot{w_0}h_0]\]
We have
\[\dot{w_0}h_0=\left(0,0,\begin{bmatrix}
0&0&1\\0&0&0\\0&0&0
\end{bmatrix}, \begin{bmatrix}
0&0&0\\0&0&0\\1&0&0
\end{bmatrix}\right)\]
and
\[(B\times B)\cdot\dot{w_0}h_0=\{n(a,b):=\left(0,0,\begin{bmatrix}
0&0&a\\0&0&0\\0&0&0
\end{bmatrix}, \begin{bmatrix}
0&0&0\\0&0&0\\b&0&0
\end{bmatrix}\right)\in V| a,b\in k^\times\}\]
It is easy to see that $n(a,b)$ is conjugate to $n(c,d)$ if and only if $a/b=c/d$. Hence the conjugacy classes in $\cN^0_{\mathrm{irreg}}$ corresponds bijectively to $k^\times$, and $n(\alpha,1)$ is a representative of the conjugacy classes corresponding to $\alpha\in k^\times$. In particular, there are infinitly many conjugacy classes in $\cN^0_{\mathrm{irreg}}$.\par
The adjoint action of $B$ on $(B\times B)\cdot w_0 h_0$ is described concretely as follows: an element in $B$ whose diagonal entries are $\la_1,\la_2,\la_3$ sends $n(a,b)$ to $n(\la_1\la_2^{-1}a,\la_1\la_2^{-1}b)$.
\subsubsection{}
Recall that $V^0_G$ is a $Z_+=\bbG_m^2$-torsor over the wonderful compactification $\overline{G}$ of $G_{\mathrm{ad}}=PGL_3$. Let $\overline{\cN}$ (resp. $\overline{\cN}^{\mathrm{reg}}_1$, $\overline{\cN}^{\mathrm{reg}}_1$, $\overline{\cN}^{\mathrm{irreg}}$) be the image of the corresponding objects in $\overline{G}$. Then the adjoint action of $G$ has 3 orbits on $\overline{\cN}$: the orbits $\overline{\cN}^{\mathrm{reg}}_1$ and  $\overline{\cN}^{\mathrm{reg}}_2$ are open in $\overline{\cN}$, while the orbit 
$\overline{\cN}^{\mathrm{irreg}}$ is a single point, hence closed.\par 
In other words, the infinity of adjoint orbits in $\cN$ is caused by the torus $Z_+$.\par

\subsection{The example of a Kottwitz-Viehmann variety}
Let $\lambda=(2,0,-2)\in X_*(T)_+$. Then $-w_0(\lambda)=\lambda$ and 
\[\varpi^{-w_0(\lambda)}=\varpi^\lambda=\begin{bmatrix}\varpi^2&0&0\\0&1&0\\0&0&\varpi^{-2}\end{bmatrix}\in T(F)\]
Let $\gamma=\left[\begin{smallmatrix}2&0&0\\0&1&0\\0&0&\frac{1}{2}\end{smallmatrix}\right]\in T(F)$ and consider the affine Springer fiber
\[X_\gamma^{\le\lambda}=\{g\in G(F)/K | g^{-1}\gamma g\in\overline{K\varpi^\lambda K}\}.\]
We can write (set theoretically) $X_\gamma^{\le\lambda}=Y_\gamma^{\le\lambda}\times X_*(T)$ where
\[Y_\gamma^{\le\lambda}=\{u\in U(F)/U(\cO) | u^{-1}\gamma u\in\overline{K\varpi^\lambda K}\}\]
\begin{lem}
$Y_\gamma^{\le\lambda}=S_0\cap\mathrm{Gr}_{\le\lambda}$ where $S_0=U(F)/U(\cO)$ is a semiinfinite orbit in the affine Grassmanian. In other words, $Y_\gamma^{\lambda}$ is the MV-cycle associated to the coweights $\mu=0$ and $\lambda=(2,0,-2)$
\end{lem}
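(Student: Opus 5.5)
The plan is a direct computation in coordinates on $U(F)$, using the minors description of $\mathrm{Gr}_{\le\lambda}$. As in the proof sketch of Theorem~\ref{thm:irr-component-hyperbolic}, put $f_\gamma(u)=u^{-1}\gamma u\gamma^{-1}$. Since $\gamma\in T(\cO)\subset K$, we have $u^{-1}\gamma u\in\overline{K\varpi^\lambda K}$ if and only if $f_\gamma(u)\in\overline{K\varpi^\lambda K}$. This is the same as $f_\gamma(u)K\in\mathrm{Gr}_{\le\lambda}$, because here the Newton point is $\mu=0$. So the lemma reduces to showing that, for $u\in U(F)$,
\[uK\in\mathrm{Gr}_{\le\lambda}\iff f_\gamma(u)K\in\mathrm{Gr}_{\le\lambda}.\]
Note that $uU(\cO)=u'U(\cO)$ and $\mathrm{Gr}_{\le\lambda}$ is left $K$-stable, so the condition depends only on the coset.

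For $\lambda=(2,0,-2)$ and $g\in\mathrm{SL}_3(F)$, I will use the standard criterion that $gK\in\mathrm{Gr}_{\le\lambda}$ if and only if all entries of $g$ have valuation $\ge-2$ and all $2\times2$ minors of $g$ have valuation $\ge -2$. This comes from the elementary divisors via $V$ and $\wedge^2V$, together with the dominance order in $\mathrm{SL}_3$. I would also use that $gK\in\mathrm{Gr}_{\le\lambda}$ if and only if $g^{-1}K\in\mathrm{Gr}_{\le -w_0\lambda}=\mathrm{Gr}_{\le\lambda}$.

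Now write
\[u=\bm{1&x&z\\0&1&y\\0&0&1},\qquad u^{-1}=\bm{1&-x&xy-z\\0&1&-y\\0&0&1}.\]
A short multiplication with $\gamma u\gamma^{-1}$, whose entries are $2x$, $2y$, $4z$, gives
\[f_\gamma(u)=\bm{1&x&3z-xy\\0&1&y\\0&0&1}.\]
For a unipotent upper triangular matrix with entries $(x,y,z)$, the nonconstant entries and $2\times2$ minors are $x$, $y$, $z$ and $xy-z$, up to sign. So the condition for $u$ is that $x$, $y$, $z$, $xy-z$ all have valuation $\ge-2$. The condition for $f_\gamma(u)$ is that $x$, $y$, $3z-xy$, $2xy-3z$ all have valuation $\ge -2$.

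These two sets of conditions are equivalent by a triangular change of variables with coefficients in $\cO^\times$.
\begin{itemize}
\item Forward direction: we have $3z-xy=2z+(z-xy)$ and $2xy-3z=2(xy-z)-z$.
\item Backward direction: the sum $(3z-xy)+(2xy-3z)=xy$ is bounded, and then $z=\tfrac13\big((3z-xy)+xy\big)$ is bounded. Here $3\in\cO^\times$ because $k$ has characteristic zero.
\end{itemize}
This gives the set-theoretic equality $Y_\gamma^{\le\lambda}=S_0\cap\mathrm{Gr}_{\le\lambda}$. The same argument with $\le$ replaced by equality on the open stratum identifies $Y_\gamma^\lambda$ with the MV cycle $S_{\lambda,0}$.

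I expect the main point needing care to be the minors criterion for $\mathrm{Gr}_{\le\lambda}$ and its compatibility with both $u$ and $u^{-1}$. Conceptually, the coordinate change works because $\gamma$ has eigenvalues whose pairwise ratios minus $1$ are units, so $f_\gamma$ is a unit-triangular automorphism on the graded pieces of $U$. The invertibility of $3$ is exactly where the hypothesis on the characteristic enters.
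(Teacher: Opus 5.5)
\textbf{The closed statement.} Your proof of the main equality $Y_\gamma^{\le\lambda}=S_0\cap\mathrm{Gr}_{\le\lambda}$ is correct, and it is essentially the paper's argument. The paper computes $u^{-1}\gamma u$ and ${}^t(u^{-1}\gamma u)^{-1}$ and compares their pole orders with the bound $2$. You work with $f_\gamma(u)=u^{-1}\gamma u\gamma^{-1}$ instead, which is harmless since $\gamma\in K$ and only removes the unit factors $2$ and $\tfrac12$. You also spell out the change of variables that the paper leaves as ``from the formula above''.

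\textbf{The open stratum.} Your final sentence is wrong. On the open stratum one needs \emph{separately} that some entry has valuation $-2$ and that some $2\times 2$ minor has valuation $-2$. Otherwise the point lies in $\mathrm{Gr}_{\le(2,-1,-1)}$ or $\mathrm{Gr}_{\le(1,1,-2)}$. The substitution $z\mapsto 3z-xy$ does not respect these two conditions individually.

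Take $x=y=\varpi^{-1}$ and $z=0$.
\begin{itemize}
\item The matrix $u$ has all entries of valuation $\ge -1$ and minor $xy-z=\varpi^{-2}$, so $uK\in\mathrm{Gr}_{(2,-1,-1)}$.
\item The matrix $f_\gamma(u)$ has entry $3z-xy=-\varpi^{-2}$ and minor $2xy-3z=2\varpi^{-2}$, so $u^{-1}\gamma u\in K\varpi^\lambda K$.
\end{itemize}
Hence $uU(\cO)\in Y_\gamma^\lambda$ although $uK\notin S_{\lambda,0}$. Taking $z=\tfrac13\varpi^{-2}$ instead gives the opposite failure.

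So the ``in other words'' clause can only be read with closures. This is how the paper's proof treats it: it writes $Y_\gamma^\lambda$ while checking only the bound. On the open strata you only get the correspondence through $f_\gamma$ from the sketch of Theorem~\ref{thm:irr-component-hyperbolic}. That correspondence preserves dimension and irreducible components, but it is not an equality of subsets of $S_0$.

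\textbf{The conceptual remark.} The same $u$ shows that your conceptual explanation is incomplete. It is not enough that the values $\alpha(\gamma)-1$ at the roots are units. What your argument actually uses is that the entry bound and the minor bound coincide: both are $-2$, because $\lambda=-w_0\lambda$. Consequently both sets of conditions reduce to $x,y,z,xy\in\varpi^{-2}\cO$. For $\lambda'=(2,-1,-1)$ this fails: the same $u$ lies in $S_0\cap\mathrm{Gr}_{\le\lambda'}$ but not in $Y_\gamma^{\le\lambda'}$.
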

\begin{proof}
Write $u=\left[\begin{smallmatrix}1&x&z\\0&1&y\\0&0&1\end{smallmatrix}\right]$, then 
\[u^{-1}=\begin{bmatrix} 1&-x&xy-z\\0&1&-y\\0&0&1 \end{bmatrix},\quad
u^{-1}\gamma u=\begin{bmatrix} 2&x&\frac{3z-xy}{2}\\0&1&\frac{1}{2}y\\0&0&1 \end{bmatrix},\quad
{}^t(u^{-1}\gamma u)^{-1}=\begin{bmatrix}\frac{1}{2}&0&0\\-\frac{1}{2}x &1&0\\
\frac{2xy-3z}{2} & -y & 2 \end{bmatrix}\]
Then $u\in Y_\gamma^{\lambda}$ means that the pole order of entries in the matrices $u^{-1}\gamma u$ and ${}^t(u^{-1}\gamma u)^{-1}$
are bounded by $2$. From the formula above, this is the same as saying that $u\in\overline{K\varpi^{\lambda}K}$.
\end{proof}

\subsection{Irreducible components}
In the irreducible representation of $\mathrm{PGL}_3(\bbC)$ with highest weight $\lambda=(2,0,-2)$, the weight space of $\mu=(0,0,0)$ has dimension $3$. By geometric Satake equivalence, this means that $Y_\gamma^{\le\lambda}=S_0\cap\mathrm{Gr}_{\le\lambda}$ has 3 irreducible components. Consider the following 3 subscheme of $Y_\gamma^{\le\lambda}$:
\[Y_1=\{u=\left[\begin{smallmatrix}1&x&z\\0&1&y\\0&0&1\end{smallmatrix}\right]\in U(F)| \mathrm{ord}(x)=-2,\mathrm{ord}(y)=0,\mathrm{ord}(z)\ge-2\}/U(\cO)\] 
\[Y_2=\{u=\left[\begin{smallmatrix}1&x&z\\0&1&y\\0&0&1\end{smallmatrix}\right]\in U(F)| \mathrm{ord}(x)=0,\mathrm{ord}(y)=-2,\mathrm{ord}(z)\ge-2\}/U(\cO)\] 
\[Y_3=\{u=\left[\begin{smallmatrix}1&x&z\\0&1&y\\0&0&1\end{smallmatrix}\right]\in U(F)| \mathrm{ord}(x)=-1,\mathrm{ord}(y)=-1,\mathrm{ord}(z)\ge-2\}/U(\cO)\] 
Then the closures of $Y_i (i=1,2,3)$ in $Y_\gamma^{\le\lambda}$ are its irreducible components. It is easy to see from this explicit description that $Y_\gamma^{\le\lambda}$ is equidimensional of dimension 4. This also follows from general theory of geometric Satake, since $\langle 2\rho,\lambda+\mu\rangle=4$ in our case. 

\subsection{The regular locus}
In this section, we determine the regular locus of the example of affine Springer $X_\gamma^{\le\lambda}$ studied in the previous part. It will be sufficient to determine the regular locus in $Y_\gamma^{\le\lambda}$. First recall the definition of this regular locus.
\begin{defn}
The regular locus $Y_\gamma^{\mathrm{reg}}\subset Y_\gamma^{\le\lambda}$ is defined by
\[Y^{\mathrm{reg}}_\gamma=\{u\in Y_\gamma^{\le\lambda} | (\varpi^{-w_0(\lambda)},u^{-1}\gamma u)\in G_+(F)\cap V^{\mathrm{reg}}(\cO)\}\]
\end{defn}
\begin{prop}
The generic open subset of the components $Y_1$ and $Y_2$ are contained in $Y_\gamma^{\mathrm{reg}}$, while the generic open subset of $Y_3$ is disjoint from $Y_\gamma^{\mathrm{reg}}$.
\end{prop}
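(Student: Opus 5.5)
We work directly with the quadruple model of the Vinberg monoid. For $u\in U(F)$ with entries $x,y,z$, the element $\ga_u:=(\varpi^{-w_0(\la)},u^{-1}\ga u)$ maps under the embedding \eqref{embed-G+-equation} to
\[\bigl(\varpi^{2},\varpi^{2},\ \varpi^{2}u^{-1}\ga u,\ \varpi^{2}\,{}^t(u^{-1}\ga u)^{-1}\bigr).\]
Here we use that $\omega_1(\varpi^\la)=\varpi^2$, $\omega_2(\varpi^\la)=\varpi^{2}$ and $\alpha_i(\varpi^\la)=\varpi^2$; the exact powers should be rechecked against the convention $\rho_i^+(t,g)=\omega_i(w_0(t)^{-1})\rho_i(g)$. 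Using the explicit matrices from the Lemma, the two matrix components are
\[A=\varpi^2\begin{bmatrix} 2&x&\frac{3z-xy}{2}\\0&1&\frac{y}{2}\\0&0&\frac12\end{bmatrix},\qquad B=\varpi^2\begin{bmatrix}\frac{1}{2}&0&0\\-\frac{x}{2} &1&0\\ \frac{2xy-3z}{2} & -y & 2 \end{bmatrix}.\]
Since $\alpha(\ga_u)\equiv 0$ and the traces vanish mod $\varpi$, the reduction $\bar\ga_u$ lies in the nilpotent cone $\cN$ whenever $u\in Y_\ga^{\le\la}$. Regularity of the $\cO$-point is checked on the special fibre, because $V^{\mathrm{reg}}$ is open and the generic fibre is regular semisimple. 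So for each of $Y_1,Y_2,Y_3$ we compute, at a generic point, the reduction $(0,0,\bar A,\bar B)$ and decide which $\mathrm{Ad}(G)$-orbit of $\cN$ it belongs to, using the decomposition $\cN^0=\cN_1^{\mathrm{reg}}\sqcup\cN_2^{\mathrm{reg}}\sqcup\cN^0_{\mathrm{irreg}}$.

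\emph{Component $Y_1$.} Take $\mathrm{ord}(x)=-2$, $\mathrm{ord}(y)=0$ and $z$ generic of order $-2$. Then $\bar A$ has nonzero entries only in positions $(1,2)$ and $(1,3)$, coming from $\overline{\varpi^2x}$ and $\overline{\varpi^2(3z-xy)/2}$. Likewise $\bar B$ is supported in positions $(2,1)$ and $(3,1)$. Both $\bar A$ and $\bar B$ are then rank one, with image $\langle v_1\rangle$ in the $A$-slot and kernel conditions matching a conjugate of $\dot w_1h_0$ or $\dot w_2h_0$. We exhibit an explicit $h\in G$, upper or lower unipotent times a permutation, with $h\bar Ah^{-1}=E_{13}$ and ${}^th^{-1}\bar B\,{}^th=E_{21}$ (respectively $E_{23}$ and $E_{31}$). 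The cleaner test is to compute $\dim G_{\bar\ga}$ via the rule $\mathrm{Ad}(h)\ga=(a_1,a_2,hAh^{-1},{}^th^{-1}B\,{}^th)$. This is a linear-algebra condition, namely $h$ preserving the line/kernel pairs of $\bar A$ and $\bar B$ and the scalars on them. We check that it is $2$ exactly when the flags determined by $(\mathrm{im}\bar A,\ker\bar A)$ and by $\bar B$ are in relative position a Coxeter element rather than $w_0$. For $Y_1$ this gives one Coxeter element, and by the $x\leftrightarrow y$ symmetry (transpose-inverse composed with $w_0$) $Y_2$ gives the other. This also shows that $Y_1$ and $Y_2$ land in different regular orbits, which is consistent with Proposition~\ref{prop:reg-irr-components}.

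\emph{Component $Y_3$.} Take $\mathrm{ord}(x)=\mathrm{ord}(y)=-1$. Now $\varpi^2x$ and $\varpi^2y$ reduce to $0$, and the only surviving entries are the $(1,3)$ entry of $\bar A$ and the $(3,1)$ entry of $\bar B$. These are $\overline{\varpi^2(3z-xy)/2}$ and $\overline{\varpi^2(2xy-3z)/2}$, generically both nonzero. Hence $\bar\ga_u=n(a,b)$ with $a,b\in k^\times$, which lies in $\cN^0_{\mathrm{irreg}}$ by the Lusztig--He description, so the generic point of $Y_3$ is not regular.

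We expect the main obstacle to be the $Y_1$/$Y_2$ step: deciding cleanly that the reduction is in a regular orbit. We would first try the direct centralizer-dimension computation. If that is messy, we would instead conjugate by an explicit element of $U(k)$ to kill the $(1,2)$-type entries and match $\dot w_ih_0$ on the nose. We must also ensure generic $z$ does not force degeneration, for example a vanishing combination like $3z-xy$ in leading order, and double-check the normalisation of the $\varpi$-powers in the embedding.
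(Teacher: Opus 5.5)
Your plan is correct and is essentially the paper's argument. You reduce $(\varpi^{-w_0(\lambda)},u^{-1}\gamma u)$ modulo $\varpi$ in the quadruple model and check that at a generic point of $Y_1$ (and of $Y_2$, which the paper treats ``similarly'' and you get by the diagram symmetry) the reduction has $2$-dimensional centralizer; on $Y_3$ the reduction is of the form $n(a,b)$, whose centralizer is $4$-dimensional (equivalently, it lies in $\cN^0_{\mathrm{irreg}}$).

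Two small corrections to your sketch. First, the degeneracy to avoid on $Y_1$ is $b+2c=0$ in the paper's notation, i.e.\ $xy-z\in\varpi^{-1}\cO$, not a condition on $3z-xy$. Second, take the literal flags $(\mathrm{im}\bar A\subset\ker\bar A)$ and its analogue for $\bar B$ viewed as an endomorphism of the dual space: their relative positions on $Y_1,Y_2,Y_3$ are $s_2,s_1,e$. The labels ``Coxeter versus $w_0$'' only arise after the $w_0$-twist built into the Lusztig--He parametrization $(B\times B)\cdot\dot w h_0$.
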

\begin{proof}
Recall that for $u\in Y_\gamma^{\le\lambda}$, the element $(\varpi^{-w_o(\lambda)},u^{-1}\gamma u)\in G_+(F)$ is generically regular semisimple. Hence it is in $V^{\mathrm{reg}}(\cO)$ if and only if its reduction mod $\varpi$ is in $V^{\mathrm{reg}}$. In the following, we will represent an element of $G_+(F)$ by its image under the embedding \eqref{embed-G+-equation} so that we can see its reduction mod $\varpi$ explicitly. In particular, if $u=\left[\begin{smallmatrix}1&x&z\\0&1&y\\0&0&1\end{smallmatrix}\right]\in U(F)$, then $(\varpi^{-w_o(\lambda)},u^{-1}\gamma u)\in G_+(F)$ will be denoted by the quadruple:
\[(\varpi^{-w_o(\lambda)},u^{-1}\gamma u)=\left(\varpi^2,\varpi^2,\begin{bmatrix}
 2\varpi^2&x\varpi^2&\frac{3z-xy}{2}\varpi^2\\0&\varpi^2&\frac{1}{2}y\varpi^2\\0&0&\varpi^2 
\end{bmatrix},\begin{bmatrix}\frac{1}{2}\varpi^2&0&0\\-\frac{1}{2}x\varpi^2 &\varpi^2&0\\
\frac{2xy-3z}{2}\varpi^2 & -y\varpi^2 & 2\varpi^2 \end{bmatrix}\right)\] 
If $u$ is in the generic open subset of $Y_1$, the reduction mod $\varpi$ of $(\varpi^{-w_o(\lambda)},u^{-1}\gamma u)$ will have the form
\begin{equation}\label{reduction-Y1-equation}
\left(0,0,\begin{bmatrix}
0&a&b\\0&0&0\\0&0&0
\end{bmatrix}, \begin{bmatrix}
0&0&0\\ -\frac{1}{2}a&0&0\\ c&0&0
\end{bmatrix}\right)
\end{equation}
where 
\[a = x\varpi^2(\mod\varpi),\quad b=\frac{3z-xy}{2}\varpi^2(\mod\varpi),\quad c=\frac{2xy-3z}{2}(\mod\varpi)\]
We assume $u$ is contained in the open subset of $Y_1$ defined by the condition that $a,b,c\in k^\times$ are mutually distinct nonzero elements.\par 
Now we calculate the centralizer of this element. An element $g\in G(k)$ is in this centralizer if and only if
\[g\begin{bmatrix}
0&a&b\\0&0&0\\0&0&0
\end{bmatrix}=\begin{bmatrix}
0&a&b\\0&0&0\\0&0&0
\end{bmatrix}g,\quad\text{and } ({}^tg)\begin{bmatrix}
0&0&0\\ -\frac{1}{2}a&0&0\\ c&0&0
\end{bmatrix}=\begin{bmatrix}
0&0&0\\ -\frac{1}{2}a&0&0\\ c&0&0
\end{bmatrix}({}^tg)\]
Write 
\[g=\begin{bmatrix}
a_{11}&a_{12}&a_{13}\\a_{21}&a_{22}&a_{23}\\a_{31}&a_{32}&a_{33}
\end{bmatrix}\quad\text{with }g^{-1}=\begin{bmatrix}
a_{22}a_{33}-a_{23}a_{32} & a_{13}a_{32}-a_{12}a_{33} & a_{12}a_{23}-a_{13}a_{22}\\
a_{23}a_{31}-a_{21}a_{33} & a_{11}a_{33}-a_{13}a_{31} & a_{13}a_{21}-a_{11}a_{23}\\
a_{21}a_{32}-a_{31}a_{22} & a_{12}a_{31}-a_{11}a_{32} & a_{11}a_{22}-a_{12}a_{21}
\end{bmatrix}\]
Then the above condition becomes the following equations
\begin{align*}
&a_{21}=a_{31}=0 \text{ (Here we used }a\ne0)\\ &aa_{11}-aa_{22}-ba_{32}=0\\ &aa_{11}-aa_{22}+2ca_{32}=0\\
&ba_{11}-aa_{23}-ba_{33}=0\\ &-2ca_{11}-aa_{23}+2ca_{33}=0
\end{align*}
Hence as long as $b+2c\ne0$, i.e. $xy-z\notin\varpi^{-1}\cO$, these equation will cuts out a subgroup of dimension $2$. Thus the generic open subset of $Y_1$ is contained in the regular locus.\par 
If $u$ is in the generic open subset of $Y_2$, the reduction mod $\varpi$ of $(\varpi^{-w_o(\lambda)},u^{-1}\gamma u)$ have the form
\begin{equation}\label{reduction-Y2-equation}
\left(0,0,\begin{bmatrix}
0&0&a\\0&0&b\\0&0&0
\end{bmatrix}, \begin{bmatrix}
0&0&0\\ 0&0&0\\ c&-2b&0
\end{bmatrix}\right)
\end{equation}
Calculations 
 similar to the previous case shows that the centralizer has dimension $2$, hence the generic open subset of $Y_2$ is also in the regular locus.\par 
Finally, if $u$ is in the generic open subset of $Y_3$, then the reduction mod $\varpi$ of  $(\varpi^{-w_o(\lambda)},u^{-1}\gamma u)$ will have the form
\begin{equation}\label{reduction-Y3-equation}
\left(0,0,\begin{bmatrix}
0&0&a\\0&0&0\\0&0&0
\end{bmatrix}, \begin{bmatrix}
0&0&0\\ 0&0&0\\ b&0&0
\end{bmatrix}\right)
\end{equation}
where $a,b\in k^\times$ are distinct nonzero elements. The centralizer of this element has dimension 4, hence irregular. Thus the generic open of $Y_3$ is irregular.
\end{proof}
Notice that in the above calculation, if $u$ is in an generic open subset of the components $Y_1$ or $Y_2$, then the reduction mod $\varpi$ of $(\varpi^{-w_o(\lambda)},u^{-1}\gamma u)$ lands in the nilpotent cone $\mathcal{N}^{\mathrm{reg}}$. Hence the components $Y_1, Y_2$ (which we call \emph{regular components}) corresponds to the 2 conjugacy classes in $\mathcal{N}^{\mathrm{reg}}$. Next we explicitly describe this correspondence.
\begin{prop}
The reduction mod $\varpi$ of a point in an open subset of $Y_1$ (rep. $Y_2$) lands in $\cN_1^{\mathrm{reg}}$ (resp. $\cN_2^{\mathrm{reg}}$).
\end{prop}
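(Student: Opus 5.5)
The plan is to avoid writing down explicit conjugating elements and instead find a simple invariant of $\mathrm{Ad}(G)$-orbits that tells the two regular nilpotent orbits $\cN_1^{\mathrm{reg}}$ and $\cN_2^{\mathrm{reg}}$ apart. By the previous proposition, a generic point of $Y_1$ or $Y_2$ reduces into $\cN^{\mathrm{reg}}=\cN_1^{\mathrm{reg}}\sqcup\cN_2^{\mathrm{reg}}$. So it is enough to evaluate this invariant on the reductions \eqref{reduction-Y1-equation} and \eqref{reduction-Y2-equation}, and on the representatives $\dot{w_1}h_0$ and $\dot{w_2}h_0$.

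\textbf{Step 1: reduce to simultaneous conjugation.} For $\gamma=(a_1,a_2,A,B)$ we have $\mathrm{Ad}(h)\gamma=(a_1,a_2,hAh^{-1},{}^th^{-1}B\,{}^th)$. Transposing the last entry gives ${}^t({}^th^{-1}B\,{}^th)=h\,{}^tB\,h^{-1}$. Hence the pair of matrices $(A,{}^tB)$ transforms under $\mathrm{Ad}(G)$ by ordinary simultaneous conjugation. Consequently the relative position of the subspaces $\mathrm{im}(A)$, $\ker(A)$, $\mathrm{im}({}^tB)$, $\ker({}^tB)$ in $k^3$ is an invariant of the orbit. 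In particular, when both matrices have rank one, the condition $\mathrm{im}(A)=\mathrm{im}({}^tB)$ is preserved by conjugation.

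\textbf{Step 2: evaluate on the representatives.}
\begin{itemize}
\item For $\dot{w_1}h_0$ we have $A=E_{13}$ and ${}^tB=E_{12}$. Both images equal $k v_1$, so $\mathrm{im}(A)=\mathrm{im}({}^tB)$.
\item For $\dot{w_2}h_0$ we have $A=E_{23}$ and ${}^tB=E_{13}$. The images are $kv_2\neq kv_1$.
\end{itemize}
Thus "$\mathrm{im}A=\mathrm{im}\,{}^tB$" holds on $\cN_1^{\mathrm{reg}}$ and fails on $\cN_2^{\mathrm{reg}}$.

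\textbf{Step 3: evaluate on the reductions.}
\begin{itemize}
\item In \eqref{reduction-Y1-equation}, $A=aE_{12}+bE_{13}$ and ${}^tB=-\tfrac12 aE_{12}+cE_{13}$. Both have image $kv_1$ once $a\neq0$ and $(-\tfrac12a,c)\neq0$, so the reduction lies in $\cN_1^{\mathrm{reg}}$.
\item In \eqref{reduction-Y2-equation}, $A=aE_{13}+bE_{23}$ with image spanned by $(a,b,0)$, and ${}^tB=cE_{13}-2bE_{23}$ with image spanned by $(c,-2b,0)$. These images coincide iff $-2ab=bc$, i.e. (for $b\neq0$) iff $c=-2a$. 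So on the open subset of $Y_2$ where $b\neq0$ and $c+2a\neq0$, the reduction lies in $\cN_2^{\mathrm{reg}}$.
\end{itemize}

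\textbf{Main obstacle.} The care needed is in the genericity conditions, which must be expressed in terms of $x,y,z$. For $Y_1$: $a\neq0$ because $\mathrm{ord}(x)=-2$, and we must also keep $b+2c\neq0$ from the previous proposition so that we are in the regular locus. For $Y_2$: $b\neq0$ because $\mathrm{ord}(y)=-2$, and $c+2a\neq0$ translates into the nonvanishing of the leading coefficient of an explicit expression in $x,y,z$ of order $-2$. One has to confirm that each of these conditions defines a nonempty open subset of $Y_i$ that also meets the regular open set from the previous proposition. This is routine: each condition is a single nonvanishing condition on leading Laurent coefficients, so we intersect finitely many nonempty opens in an irreducible variety.

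Finally, if desired, the correspondence can be made explicit by conjugating by a suitable element of $B$ to bring $(A,{}^tB)$ to $(E_{13},E_{12})$, respectively $(E_{23},E_{13})$. This is not needed, however, since the two orbits are disjoint and the invariant separates them.
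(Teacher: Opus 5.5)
Your proof is correct, but it argues differently from the paper.

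\textbf{The paper's route.} The paper proves orbit membership constructively. For the $Y_1$ reduction $(0,0,aE_{12}+bE_{13},-\tfrac12aE_{21}+cE_{31})$ it writes down $g=\mathrm{diag}(a_{11},M)$, where $a_{11}^3=-ac-\tfrac{ab}{2}=-\tfrac a2(b+2c)$ and $M$ is an explicit $2\times2$ block. It then checks that $g\dot w_1h_0g^{-1}$ is the reduction, and leaves $Y_2$ as ``similar''. The condition $b+2c\neq0$ appears simply as $a_{11}\neq0$. The Lusztig--He decomposition $\cN^{\mathrm{reg}}=\cN_1^{\mathrm{reg}}\sqcup\cN_2^{\mathrm{reg}}$ is never needed.

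\textbf{Your route.} Your invariant is whether the lines $\mathrm{im}A$ and $\mathrm{im}\,{}^tB$ coincide, which is preserved because $(A,{}^tB)$ moves by simultaneous conjugation. It avoids explicit matrices and cube roots and treats $Y_1$ and $Y_2$ uniformly. In exchange, it leans on two imported facts: that $\cN^{\mathrm{reg}}$ consists of exactly two orbits, and regularity from the previous proposition. The regularity input is unavoidable on $Y_1$, because the irregular classes $n(\alpha,1)$ also satisfy $\mathrm{im}A=\mathrm{im}\,{}^tB=kv_1$. You correctly keep $b+2c\neq0$, which with the evident identifications is $\mathrm{ord}(xy-z)=-2$.

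\textbf{What your method adds on $Y_2$.} There it gives slightly more than the paper. One computes $c+2a=\tfrac32\,\overline{z\varpi^2}$, so the open condition is just $\mathrm{ord}(z)=-2$. Moreover, when the two lines differ, the centralizer is directly seen to be $\{\lambda I+sE_{13}+tE_{23}:\lambda^3=1\}$. So regularity on $Y_2$ comes for free, with no appeal to the paper's unwritten ``similar calculation''.

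\textbf{A small correction.} In your closing aside, the conjugating element cannot be taken in $B$. For $Y_1$, every $h$ sending $\dot w_1h_0$ to the reduction has the form $gz$ with $z$ in the centralizer of $\dot w_1h_0$, and all such $gz$ have nonzero $(3,2)$-entry, as does the paper's $g$. Since you don't use this aside, the argument is unaffected.
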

\begin{proof}
First consider a point $u\in Y_1\cap Y_\gamma^{\mathrm{reg}}$ and the reduction mod $\varpi$ of $(\varpi^{-w_o(\lambda)},u^{-1}\gamma u)$ as in \eqref{reduction-Y1-equation}. Using the notation in \eqref{reduction-Y1-equation}, we have
\[a = x\varpi^2(\mod\varpi),\quad b=\frac{3z-xy}{2}\varpi^2(\mod\varpi),\quad c=\frac{2xy-3z}{2}(\mod\varpi)\]
Recall from the calculation in previous proof that since $u$ is in the regular locus, we must have $b+2c\ne0$. Let 
\[a_{11}=(-ac-\frac{ab}{2})^{\frac{1}{3}}, a_{22}=ba_{11}^{-2}, a_{23}=-ca_{11}^{-2}, a_{32}=-aa_{11}^{-2}, a_{33}=-\frac{1}{2}aa_{11}^{-2}\] 
and 
\[g=\begin{bmatrix}
a_{11}&0&0\\0&a_{22}&a_{23}\\0&a_{32}&a_{33}
\end{bmatrix}\]
Then we check that $g\dot{w}_1h_0g^{-1}$ equals to the element \eqref{reduction-Y1-equation}.\par 
Thus for any point  $u\in Y_1\cap Y_\gamma^{\mathrm{reg}}$, the reduction mod $\varpi$ of $(\varpi^{-w_o(\lambda)},u^{-1}\gamma u)$ lands in $\cN_1^{\mathrm{reg}}$. Similar conclusion holds for $Y_2$ and $\cN_2^{\mathrm{reg}}$.
\end{proof}
Now we calculate the orbit of the centralizer $G_\gamma(F)=T(F)$. First we calculate the compact open subgroup of $G_\gamma(F)$ which acts trivially on $X_\gamma^{\le\lambda}$. Consider an element 
\[j=\begin{bmatrix}
t_1&0&0\\0&t_2&0\\0&0&t_3
\end{bmatrix}\in G_\gamma(F)  
\]
where $t_1t_2t_3=1$. For $u=\begin{bmatrix}
1 & x & z\\0&1&y\\0&0&1
\end{bmatrix}$ with $uK\in Y_\gamma^{\le\lambda}$, we would like to figure out for which $j\in G_\gamma(F)$, $juK=uK$. This is equivalent to $u^{-1}ju\in K$. We calculate that
\[u^{-1}\gamma u=\begin{bmatrix}
t_1&(t_1-t_2)x & zt_1-yt_2+(xy-z)t_3\\
0 & t_2 & y(t_2-t_3)\\
0&0&t_3
\end{bmatrix}\]
For which we see that if $j\equiv\mathrm{Id}\mod\varpi^2$, then this matrix has integral entries. Hence the congruence subgroup $1+\varpi^2\mathfrak{t}(\cO)$ acts trivially on $X_\gamma^{\le\lambda}$. Let $P^0_\gamma=T(\cO/\varpi^2\cO)$ be the $k$ group with $\dim P^0_\gamma=4$. Then $P_\gamma\cong P^0_\gamma\times X_*(T)$ is the quotient of $G_\gamma(F)$ by the congruence subgroup mentioned above. \par 
Note that if $u\in Y_\gamma^{\mathrm{reg}}$, then the stabilizer of $u$ in $P_\gamma$ is trivial. In particular, the open orbit has dimension equal to $\dim P_\gamma=4$, which is consistent with our calculation before. \par 
On the other hand, if $u$ is in the irregular locus, for example when $u\in Y_1$ and $xy-z\in\varpi^{-1}\cO$ or $u\in Y_3$, then there is a one-dimension subgroup of $P_\gamma$ which stabilizes $u$ and hence the orbit of $u$ will have dimension $3$.

\end{document}